\newcommand{\blue}[1]{\textcolor{blue}{#1}}
\newcommand{\delHG}[1]{\blue{\ \ldots\ }}
\newcommand{\at}[1]{}
\newcommand{\Gr}{{\rm gph\,}}
\newcommand{\co}{{\rm conv\,}}
\newcommand{\cl}{{\rm cl\,}}
\newcommand{\ri}{{\rm ri\,}}
\newcommand{\argmax}{\mathop{\rm arg\,max\,}\limits}
\newcommand{\xb}{\bar x}
\newcommand{\yb}{\bar y}
\newcommand{\zb}{\bar z}
\newcommand{\wb}{\bar w}
\newcommand{\yba}{\yb^\ast{}}
\newcommand{\vb}{\bar v}
\newcommand{\ub}{\bar u}
\newcommand{\vba}{\vb^\ast{}}
\newcommand{\zba}{\zb^\ast{}}
\newcommand{\lb}{{\bar\lambda}}
\newcommand{\I}{{\cal I}}
\newcommand{\J}{{\cal J}}
\newcommand{\A}{{\cal A}}
\newcommand{\F}{{\cal F}}
\newcommand{\Lsp}{{\cal L}}
\newcommand{\KbG}{{\bar K_\Gamma}}
\newcommand{\Tlin}{T^{\rm lin}}
\newcommand{\TlinO}{\Tlin_{P,D}}
\newcommand{\Lb}{\bar\Lambda}
\newcommand{\Lbv}{\bar\Lambda(v)}
\newcommand{\R}{\mathbb{R}}
\newcommand{\norm}[1]{\|#1\|}
\newcommand{\dist}[1]{{\rm d}(#1)}
\newcommand{\B}{{\cal B}}
\newcommand{\E}{{\cal E}}
\newcommand{\K}{{\cal K}}
\newcommand{\mv}{\,\vert\, }
\newcommand{\skalp}[1]{\langle #1\rangle}
\newcommand{\range}{{\rm Range\,}}
\newcommand{\Span}{\mathop{\rm span\,}\limits}
\newcommand{\vek}[1]{\left(\begin{array}{c}#1\end{array}\right)}
\newtheorem{definition}{Definition}
\newtheorem{theorem}{Theorem}
\newtheorem{lemma}{Lemma}
\newtheorem{example}{Example}
\newtheorem{corollary}{Corollary}
\newtheorem{proposition}{Proposition}
\newtheorem{remark}{Remark}
\newtheorem{assumption}{Assumption}
\begin{document}
\title{New sharp necessary optimality conditions for
 mathematical programs with equilibrium
constraints}
\author{Helmut Gfrerer\thanks{Institute of Computational Mathematics, Johannes Kepler University Linz, A-4040 Linz, Austria, e-mail:
helmut.gfrerer@jku.at. {The research of this author was partially supported by the Austrian Science Fund (FWF) under grant P29190-N32.}} \and Jane J. Ye\thanks{Department of Mathematics
and Statistics, University of Victoria, Victoria, B.C., Canada V8W 2Y2, e-mail: janeye@uvic.ca. The research of this author was partially
supported by NSERC.}}
\date{}
\maketitle
\begin{abstract}
In this paper, we study the mathematical program with equilibrium constraints (MPEC) formulated as a mathematical program with  a parametric generalized equation involving {the} regular normal cone. We derive a new necessary optimality  condition which  is sharper than the usual M-stationary condition
and is applicable even when no   constraint qualifications hold for  the corresponding mathematical program with complementarity constraints (MPCC)  reformulation.

\vskip 10 true pt

\noindent {\bf Key words}: mathematical programs with equilibrium constraints,  constraint qualifications, necessary optimality conditions

\vskip 10 true pt

\noindent {\bf AMS subject classification}: 49J53, 90C30, 90C33, 90C46.

\end{abstract}

%\author{Helmut Gfrerer\thanks{Institute of Computational Mathematics, Johannes Kepler University Linz, A-4040 Linz, Austria, e-mail:
%helmut.gfrerer@jku.at.} and Jane J. Ye\thanks{Department of Mathematics
%and Statistics, University of Victoria, Victoria, B.C., Canada V8W 2Y2, e-mail: janeye@uvic.ca. The research of this author was partially
%supported by NSERC.}}

\maketitle

\section{Introduction}
In this paper we consider the {\em mathematical program with equilibrium constraints} (MPEC) of the form
\begin{eqnarray}
\label{EqMPEC}  {\rm (MPEC)}\qquad\min_{x,y}&&F(x,y)\\
\nonumber  \mbox{s.t.}&&0\in \phi(x,y)+\widehat N_\Gamma(y),\\
\nonumber  &&G(x,y)\leq 0,
\end{eqnarray}
where  $\Gamma:= \{y\mv g(y)\leq 0\}$ and $\widehat N_\Gamma(y)$ denotes the so-called {\em regular normal cone} to the set $\Gamma$ at $y$  (see Definition \ref{DefVarGeometry}). Here we assume that  $F:\R^n\times\R^m\to\R$, $\phi:\R^n\times\R^m\to\R^m$, $G:\R^n\times\R^m\to\R^p$ are  continuously differentiable and $g:\R^m\to\R^q$ is twice continuously differentiable.

In the case where $\Gamma$
 is convex, $\widehat N_\Gamma(y)=N_\Gamma(y)$ is the normal cone in the sense of convex analysis and (MPEC) is equivalent to the {\em mathematical program with variational inequality constraints} (MPVIC) which arised in many applications from engineering and economics;  see e.g. \cite{Luo-Pang-Ralph,Out-Koc-Zowe} and the references within.

Up to now a common approach for handling (MPEC) is its reformulation as a {\em mathematical program with complementarity constraints} (MPCC). If at a point $y\in\Gamma$ a certain constraint qualification is fulfilled,  then by the Karush-Kuhn-Tucker (KKT)  condition,
\[0\in \phi(x,y)+\widehat N_\Gamma(y)\ \Longleftrightarrow \exists \lambda : 0= \phi(x,y)+\nabla g(y)^T \lambda,
\ \   0\leq -g(y) \perp \lambda \geq 0.\]
This observation yields the program
\begin{eqnarray*}
\mbox{(MPCC)} \qquad\min_{x,y,\lambda} &&F(x,y)\nonumber\\
 \mbox{s.t.}&&0= \phi(x,y)+\nabla g(y)^T \lambda, \\
 && 0\leq -g(y) \perp \lambda \geq 0,\\
 &&G(x,y)\leq 0,
\end{eqnarray*}
which has been considered extensively in the literature during the last three decades.
However, since in (MPCC) the minimization is over the original variable $x,y$ as well as the multiplier $\lambda$, it is not equivalent to the original problem (MPEC) in general, cf.\cite{Dam-Dut}. Moreover as discussed in \cite{Ad-Hen-Out,GfrYe16a}, there are many difficulties involved in using reformulation (MPCC) and therefore it is favorable to consider (MPEC) instead of (MPCC).

In  Ye and Ye \cite{YeYe},  the  calmness/pseudo upper-Lipschitz continuity  of the perturbed feasible  mapping  of (MPEC)  has been proposed and proven to be a constraint qualification for the Mordukhovich (M-) stationarity to hold at a minimizer. The calmness of  the perturbed feasible  mapping of (MPEC) is known to be equivalent to the subregularity of the set-valued map
\begin{equation*}
M_{\rm MPEC}{(x,y)}:=\vek{\phi(x,y)+\widehat N_\Gamma(y)\\G(x,y)-\R^p_-},
\end{equation*}
which we refer to be the {\em metric subregularity constraint qualification} (MSCQ).
In \cite[Theorem 5]{GfrYe16a} (Theorem \ref{ThSuffCondMS_FOGE} in this paper), a concrete sufficient condition in terms of the problem data is provided for MSCQ. Continuing the work in  \cite{GfrYe16a}, in  this paper we aim at developing a new sharp necessary optimality condition for problem (MPEC).

Recently  \cite{Gfr18} (see Theorem \ref{ThLinM_StatBasic} in this paper) derived a new necessary optimality condition for an optimization problem with a set-constraint  in the form of $P(z)\in D$ where $P$ is continuously differentiable and $D$ is a closed set. The new optimality condition is  derived in terms of the so-called  {\em linearized Mordukhovich (M-) stationary condition} which is stronger than the usual M-stationarity condition.

It is easy to see that the constraint of (MPEC) can be rewritten in the form
$$P(x,y):=\left(\begin{array}{c}(y,-\phi(x,y))\\G(x,y)\end{array}\right)\in D:=\Gr \widehat N_\Gamma\times \R^p_-$$
and hence (MPEC) can be treated as an optimization problem with the above set-constraint. In \cite[Theorem 5]{Gfr18},  under some constraint qualifications on the lower level constraint $g(y)\leq 0$,  which can be guaranteed to hold under the constant rank constraint qualification (CRCQ), the   linearized M-stationary condition for (MPEC) is derived. In this paper, we drop this constraint qualification and  we derive the linearized M-necessary optimality condition under the so-called 2-nondegeneracy condition on $g(y)\leq 0$.

We organize our paper as follows. Section 2 contains the preliminaries from variational geometry and variational analysis. In Section 3, we recall the linearized M-
optimality conditions for the optimization problem with a set-constraint.
In Section 4, we discuss constraint qualifications for (MPEC).  In Section 5, under the 2-nondegeneracy condition, we derive formula for regular normal cones to tangent directions that will be used in applying the necessary optimality condition from Section 3. Finally in Section 6, we reformulate (MPEC) in the form of an optimization problem with a set-constraint and apply the necessary optimality condition from Section 3.

 The following {notation} will be used throughout the paper. We denote by $\B_{\R^q}$ the closed unit ball in $\R^q$ while when %there is no confusion arises
no confusion arises we denote it by $ \B$. By $\B(\zb; r)$ we denote the closed ball centered at $\zb$ with radius $r$.
% $\Sp_{\R^q}$ is the unit sphere in $\R^q$.
 For a matrix
$A$, we denote by $A^T$ its transpose. The inner product of two vectors $x, y$ is denoted by
$x^T y$ or $\langle x,y\rangle$ and by $x\perp y$ we mean $\langle x, y\rangle =0$. For $\Omega \subseteq \R^d$ and $z \in \R^d$, we denote by $\dist{z, \Omega}$ the distance from $z $ to $\Omega$. The polar cone of a set $\Omega$ is
$\Omega^\circ:=\{x|x^Tv\leq 0 \ \forall v\in \Omega\}$ and $\Omega^\perp$ denotes the orthogonal complement to $\Omega$. For a set $\Omega$, we denote by $\co \Omega$ and $\cl\Omega$ the convex hull
and the closure  of $\Omega$,  respectively. For a function $f:\R^d \rightarrow \R$, we denote by $\nabla f(\bar z)$ the gradient vector of $f$ at $\bar z$ and  $\nabla^2 f(\bar z)$ the Hessian matrix of $f$ at $\bar z$.
For a  mapping $P:\R^d\rightarrow \R^s$ with $s>1$, we denote by $\nabla P(z)$ the Jacobian matrix of $P$ at $z$ and  for any given $w,v\in \R^d$, $w^T \nabla P(\bar z) v$ is the vector in  $\R^s$ with the $i$th component equal to $w^T \nabla^2 P_i(\bar z) v, i=1,\dots, s$.  Let $M:\R^d\rightrightarrows\R^s$ be an arbitrary set-valued mapping. We denote its graph by $ {\rm gph}M:=\{(z,w)| w\in M(z)\}.$ $o:\R_+\rightarrow \R$ denotes a function with the property that $o(\lambda)/\lambda\rightarrow 0$ when $\lambda\downarrow 0$.
\section{Preliminaries from variational geometry and variational analysis}
In this section, we gather some preliminaries and preliminary results in variational analysis that will be needed in the paper. The reader may find more details in the monographs \cite{Clarke,Mor,RoWe98} and  in the papers we refer to.

\begin{definition}[Tangent cone and normal cone]\label{DefVarGeometry}
Given a set
$\Omega\subseteq \mathbb R^d$ and a point $\bar z\in\Omega$,
the (Bouligand-Severi) {\em tangent/contingent cone} to $\Omega$
at $\bar z$ is a closed cone defined by
\begin{equation*}
T_\Omega(\bar z)
:=\limsup_{t\downarrow 0}\frac{\Omega-\bar z}{t}
=\Big\{u\in\mathbb R^d\Big|\;\exists\,t_k\downarrow
0,\;u_k\to u\;\mbox{ with }\;\bar z+t_k u_k\in\Omega ~\forall ~ k\}.
\end{equation*}
The (Fr\'{e}chet) {\em regular normal cone} and the (Mordukhovich) {\em limiting/basic normal cone} to $\Omega$ at $\bar
z\in\Omega$ are  closed cones defined by
\begin{eqnarray}
&& \widehat N_\Omega(\bar z):=(T_\Omega(\bar z))^\circ\nonumber\\
%\Big\{v^\ast\in\R^d\Big|\;\limsup_{z\stackrel{\Omega}{\to}\bar z}\frac{\skalp{ v^\ast,z-\bar z}}{\|z-\bar z\|}\le 0\Big\} \nonumber\\
\mbox{and }  &&
N_\Omega(\bar z):=\left \{z^\ast \mv \exists z_{k}\stackrel{\Omega}{\to}\zb \mbox{ and } z^\ast_k\rightarrow z^\ast \mbox{ such that } z^\ast_{k}\in \widehat{N}_{\Omega}(z_k) \  \forall k \right \},
%\label{EqLimNormalCone}
\nonumber
\end{eqnarray}
respectively.
\end{definition}
When  the set $\Omega$ is convex, the tangent/contingent cone and the regular/limiting normal cone reduce to
the classical tangent cone and normal cone of convex analysis,  respectively.

\begin{definition}[Metric regularity and subregularity]
  	 Let $M:\R^d\rightrightarrows\R^s$ be a set-valued mapping and let $(\zb,\wb)\in\Gr M$.
  \begin{enumerate}
  \item[(i)]  We say that $M$ is {\em metrically subregular} at $(\zb,\wb)$ if there exist a neighborhood $Z$ of $\zb$ and a positive number  $\kappa>0$ such that
  \begin{equation*}\dist{z,M^{-1}(\wb)}\leq\kappa\dist{\wb,M(z)}\ \; \forall z\in Z.
  \end{equation*}
  \item[(ii)]   We say that $M$ is {\em metrically regular around} $(\zb,\wb)$ if there exist  neighborhoods $Z$ of $\zb$, $W$ of $\wb$ and a positive number  $\kappa>0$ such that
  \begin{equation*}\dist{z,M^{-1}(w)}\leq\kappa\dist{w,M(z)}\ \; \forall (w,z)\in W\times Z.
  \end{equation*}
  \end{enumerate}
\end{definition}
It is well-known that metric subregularity of $M$ at $(\zb,\wb)$ is equivalent with the property of {\em calmness} of the inverse mapping $M^{-1}$ at $(\wb,\zb)$, cf. \cite{DoRo04}, whereas metric regularity of $M$ around $(\zb,\wb)$ is equivalent with the {\em Aubin property} of the inverse mapping $M^{-1}$ around $(\wb,\zb)$.  It follows immediately from the definition that metric regularity of $M$ around $(\bar z,\bar w)$ implies metric subregularity. Further,
metric subregularity of $M$ at $(\zb,\wb)$ is equivalent with metric subregularity of the mapping $z\to (z,\bar w)-\Gr M$ at $(\zb,(0,0))$, cf. \cite[Proposition 3]{GfrYe16a}.

Metric regularity can be verified via the so-called {\em Mordukhovich-criterion}. We give here only reference to a special case which is used  in the sequel.
\begin{theorem}[Mordukhovich criterion](cf. {\cite[Example 9.44]{RoWe98}})
Let $P:\R^d \to\R^s$ be continuously differentiable, let $D \subseteq  \R^s$ be closed and let $P(\zb)\in D$. Then the mapping $z\rightrightarrows P(z)-D$ is metrically regular around $(\zb,0)$ if and only if
\begin{equation}\label{EqMoCrit}
\nabla P(\zb)^Tw^*=0,\ w^*\in N_D\big(P(\zb)\big)\ \Longrightarrow\ w^*=0.
\end{equation}
\end{theorem}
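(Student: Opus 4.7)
The plan is to apply the general Mordukhovich criterion for metric regularity (Rockafellar--Wets, Theorem 9.40/9.43), which characterizes metric regularity of a set-valued mapping $M$ around $(\bar z, \bar w)\in\Gr M$ via the coderivative condition $D^*M(\bar z, \bar w)(0)=\{0\}$, to the specific mapping $M(z):=P(z)-D$ with $\bar w=0$. The whole task reduces to computing $N_{\Gr M}(\bar z, 0)$, and then translating the coderivative condition into the implication displayed in \eqref{EqMoCrit}.

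First, I would represent the graph as a preimage. Observe
\[
\Gr M=\{(z,w)\in\R^d\times\R^s \mv P(z)-w\in D\}=H^{-1}(D),
\]
where $H(z,w):=P(z)-w$ is $C^1$ with Jacobian $\nabla H(\bar z,0)=(\nabla P(\bar z),-I)$. Because the $w$-block $-I$ is already surjective onto $\R^s$, the transversality qualification condition between $H$ and $D$ at $(\bar z,0)$ is trivially satisfied. By the chain rule for limiting normal cones under $C^1$ maps with surjective Jacobian (Rockafellar--Wets, Exercise 6.7/Theorem 6.14), this yields
\[
N_{\Gr M}(\bar z,0)=\nabla H(\bar z,0)^T N_D(P(\bar z))=\bigl\{(\nabla P(\bar z)^T\xi,-\xi)\mv \xi\in N_D(P(\bar z))\bigr\}.
\]

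Next, I would read off the coderivative. By definition $z^*\in D^*M(\bar z,0)(w^*)$ iff $(z^*,-w^*)\in N_{\Gr M}(\bar z,0)$, and matching against the formula above forces $\xi=w^*$, $w^*\in N_D(P(\bar z))$, and $z^*=\nabla P(\bar z)^Tw^*$. Hence
\[
D^*M(\bar z,0)(w^*)=\begin{cases}\{\nabla P(\bar z)^Tw^*\}&\text{if } w^*\in N_D(P(\bar z)),\\ \emptyset &\text{otherwise.}\end{cases}
\]
The Mordukhovich criterion $0\in D^*M(\bar z,0)(w^*)\Longrightarrow w^*=0$ then translates directly into the implication \eqref{EqMoCrit}.

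The only non-routine step is the normal-cone chain rule, but here it is painless because surjectivity of $\nabla_w H(\bar z,0)=-I$ makes the qualification condition automatic; there is no genuine obstacle, and the remainder of the argument is a matter of bookkeeping between the definitions of coderivative and normal cone.
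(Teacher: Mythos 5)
Your derivation is correct, and it is essentially the argument behind the paper's citation: the paper gives no proof of its own, merely pointing to \cite[Example 9.44]{RoWe98}, and that example is obtained exactly as you do it --- writing $\Gr M$ as $H^{-1}(D)$ for $H(z,w)=P(z)-w$, using surjectivity of $\nabla H(\zb,0)$ (via the $-I$ block) to get the exact normal-cone formula $N_{\Gr M}(\zb,0)=\{(\nabla P(\zb)^T\xi,-\xi)\mv \xi\in N_D(P(\zb))\}$, and then invoking the general coderivative criterion for metric regularity. No gaps; the qualification condition is indeed automatic here.
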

For verifying the property of metric subregularity there are some  sufficient conditions  known, see e.g. \cite{Gfr11, Gfr13a, Gfr13b,Gfr14a, GfrKl16}.

\begin{definition}[Critical cone] For a closed set $\Omega \subseteq \R^d$, a point $z\in\Omega$ and a regular normal $z^*\in\widehat N_\Omega(z)$ we denote by
\[\K_\Omega(z,z^*):=T_\Omega(z)\cap [z^*]^\perp\]
the {\em critical cone} to $\Omega$ at $(z,z^*)$
\end{definition}
In this paper polyhedrality will play an important role.
\begin{definition}[Polyhedrality]
\begin{enumerate}
  \item Let $C\subseteq  \R^d$.
  \begin{enumerate}
   \item We say that $C$ is {\em convex polyhedral}, if it can be written as the intersection of finitely many halfspaces, i.e. there are elements $(a_i,\alpha_i)\in\R^d\times\R$, $i=1,\ldots,p$ such that $C=\{z\mv \skalp{a_i,z}\leq\alpha_i,\ i=1,\ldots,p\}$.
   \item $C$ is said to be  {\em polyhedral}, if it is the union of finitely many convex polyhedral sets.
    \item Given a point $c\in C$, we say that $C$ is {\em locally polyhedral near $c$} if there is a neighborhood $W$ of $c$ and a polyhedral set $\tilde C$ such that $C\cap W= \tilde C\cap W$.
    \end{enumerate}
    \item A mapping $M:\R^d\rightrightarrows \R^s$ is called {\em polyhedral}, if its graph $\Gr M$ is a polyhedral set.
\end{enumerate}
\end{definition}

\begin{lemma}\label{Normalcones}
  Let $\Omega\subseteq\R^d$ be locally polyhedral near some point $\zb\in \Omega$. Then
  \begin{equation*}
    N_\Omega(\zb)=\bigcup_{w\in T_\Omega(\zb)}\widehat N_{T_\Omega(\zb)}(w).
  \end{equation*}
\end{lemma}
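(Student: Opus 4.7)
The plan is to reduce to the case where $\Omega$ itself is polyhedral by exploiting that both $T_\Omega(\bar z)$ and $N_\Omega(\bar z)$ are purely local objects. Thus we may assume $\Omega = \bigcup_{i=1}^k C_i$ with each $C_i$ a convex polyhedral set containing $\bar z$. For each $i$, the tangent cone $T_{C_i}(\bar z)$ is determined by the inequalities defining $C_i$ that are active at $\bar z$, so shrinking into a sufficiently small neighborhood $U$ of the origin yields $(C_i - \bar z) \cap U = T_{C_i}(\bar z) \cap U$. Setting $P := T_\Omega(\bar z) = \bigcup_i T_{C_i}(\bar z)$, this gives $(\Omega - \bar z) \cap U = P \cap U$, and in particular $\widehat N_\Omega(\bar z + u) = \widehat N_P(u)$ whenever $\bar z + u \in \Omega$ and $u \in U$.

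For the inclusion $\supseteq$, I would fix $w \in P$ and $z^* \in \widehat N_P(w)$. For sufficiently small $t > 0$ we have $tw \in U \cap P$ and hence $\bar z + tw \in \Omega$; since $P$ is a cone, a direct computation shows $T_P(tw) = T_P(w)$, so $\widehat N_\Omega(\bar z + tw) = \widehat N_P(tw) = \widehat N_P(w) \ni z^*$. Letting $t \downarrow 0$ exhibits $z^*$ as a limit of regular normals along a sequence converging to $\bar z$, which gives $z^* \in N_\Omega(\bar z)$.

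For the reverse inclusion $\subseteq$, I would take $z^* \in N_\Omega(\bar z)$ together with approximating sequences $z_k \to \bar z$ in $\Omega$ and $z_k^* \to z^*$ with $z_k^* \in \widehat N_\Omega(z_k)$. Eventually $z_k - \bar z \in U$, whence $z_k^* \in \widehat N_P(w_k)$ for $w_k := z_k - \bar z \in P$. The key observation is that the polyhedral cone $P = \bigcup_i T_{C_i}(\bar z)$ admits only finitely many distinct tangent cones $T_P(w)$ as $w$ ranges over $P$, because each such cone is assembled from faces of the finitely many polyhedral cones $T_{C_i}(\bar z)$. Extracting a subsequence, we may assume $T_P(w_k) = T_P(w_1)$ for all $k$, so $z_k^* \in T_P(w_1)^\circ = \widehat N_P(w_1)$; closedness of this polyhedral cone then forces $z^* \in \widehat N_P(w_1)$, as required.

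The main obstacle is the finiteness observation in the last step: a priori the points $w_k$ could explore infinitely many ``strata'' of $P$, but the polyhedral structure rules this out and ensures the limiting normal does not escape the union on the right-hand side. Once the local identification $\Omega - \bar z = P$ on $U$ is in place, both inclusions reduce to elementary manipulations with polyhedral cones.
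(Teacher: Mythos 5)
Your argument is correct. The paper itself gives no proof beyond the citation ``Follows from \cite[Lemma 2.2]{Gfr14a}'', and what you have written is essentially the standard proof of that cited lemma: reduce locally to the cone $P=T_\Omega(\zb)$ via the identification $(\Omega-\zb)\cap U=P\cap U$, use positive homogeneity ($T_P(tw)=T_P(w)$ for $t>0$) for the inclusion $\supseteq$, and use the fact that a finite union of convex polyhedral cones has only finitely many distinct tangent cones, together with closedness of each polar $T_P(w)^\circ=\widehat N_P(w)$, for the inclusion $\subseteq$. The only cosmetic point is that the identity $\widehat N_\Omega(\zb+u)=\widehat N_P(u)$ requires $u$ in the \emph{interior} of $U$, which is harmless since you may take $U$ open from the outset.
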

\begin{proof}
  Follows from \cite[Lemma 2.2]{Gfr14a}.
\end{proof}
We recall some properties of closed cones.
\begin{proposition}\label{closedcone}  Let $K$ be a closed cone in $\mathbb{R}^d$. Then
$T_K(0)=K$.
\end{proposition}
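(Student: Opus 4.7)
The plan is to prove both inclusions $K \subseteq T_K(0)$ and $T_K(0) \subseteq K$ directly from the definition of the tangent cone and the defining properties of a closed cone, namely closedness under nonnegative scalar multiplication together with topological closedness. The main obstacle is essentially notational: there is no deep content here, only a careful use of the two sequences $t_k$ and $u_k$ appearing in the $\limsup$ definition of $T_K(0)$.

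For the inclusion $K \subseteq T_K(0)$, I would pick an arbitrary $u \in K$ and exhibit explicit sequences realizing $u$ as a tangent vector at $0$. The obvious choice is $t_k := 1/k \downarrow 0$ and $u_k := u \to u$; the cone property of $K$ then guarantees $0 + t_k u_k = u/k \in K$ for every $k$, so $u \in T_K(0)$.

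For the reverse inclusion $T_K(0) \subseteq K$, I would take $u \in T_K(0)$ together with sequences $t_k \downarrow 0$ and $u_k \to u$ such that $t_k u_k \in K$. The key observation is that since $1/t_k > 0$ and $K$ is closed under multiplication by positive scalars, I can rescale: $u_k = (1/t_k)(t_k u_k) \in K$ for all $k$. Passing to the limit $u_k \to u$ and invoking the closedness of $K$ yields $u \in K$, which completes the proof.
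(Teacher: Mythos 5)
Your proof is correct: both inclusions are established exactly as one would expect, using the cone property for $K\subseteq T_K(0)$ and the rescaling $u_k=(1/t_k)(t_ku_k)\in K$ together with closedness for the reverse inclusion. The paper states this proposition without proof as a standard fact, and your argument is precisely the canonical one.
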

A consequence of the above property is that for any $y\in D$,
\begin{equation}\label{normaltangent}
\widehat N_{T_D(y)}(0)=\big ( T_{T_D(y)}(0)\big  )^\circ=\big (T_D(y)\big )^\circ=\widehat N_D(y).\end{equation}

The following  rules for calculating polar cones will be useful.
\begin{proposition}\label{calpolar}
(\cite[Corollary 16.4.2]{Ro70})\label{Prop1}  Let $A, B$ be nonempty  convex cones in $\mathbb{R}^d$. Then
$(A+B)^\circ=A^\circ\cap B^\circ$. %If $A\cap B\not =\emptyset$ and both $A$ and $B$ are closed, then $(A\cap B)^\circ=A^\circ +B^\circ$.
{If both $A$ and $B$ are closed then  $(A\cap B)^\circ=\cl(A^\circ +B^\circ)$.}\end{proposition}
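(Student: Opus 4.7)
The plan is to prove the two identities separately, with the second deducible from the first via the bipolar theorem for convex cones.

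For the first identity $(A+B)^\circ = A^\circ\cap B^\circ$, I would argue both inclusions directly from the definition of the polar. Since $A$ and $B$ are convex cones (hence both contain the origin), one has $A\subseteq A+B$ and $B\subseteq A+B$, so any $x$ polar to $A+B$ is polar to both $A$ and $B$; this gives $(A+B)^\circ\subseteq A^\circ\cap B^\circ$. Conversely, if $x\in A^\circ\cap B^\circ$, then for each $a\in A$, $b\in B$ we have $\langle x,a+b\rangle=\langle x,a\rangle+\langle x,b\rangle\leq 0$, whence $x\in(A+B)^\circ$. This step does not use closedness.

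For the second identity $(A\cap B)^\circ=\cl(A^\circ+B^\circ)$, I would invoke the bipolar theorem: for any convex cone $C$ containing the origin, $C^{\circ\circ}=\cl(C)$, with equality $C^{\circ\circ}=C$ when $C$ is closed. Applying the first identity to the closed convex cones $A^\circ$ and $B^\circ$ yields $(A^\circ+B^\circ)^\circ=A^{\circ\circ}\cap B^{\circ\circ}=A\cap B$, using closedness of $A$ and $B$. Taking polars once more and applying the bipolar theorem to the convex cone $A^\circ+B^\circ$ gives $(A\cap B)^\circ=(A^\circ+B^\circ)^{\circ\circ}=\cl(A^\circ+B^\circ)$, as required.

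I do not anticipate any real obstacle in this argument, since the bipolar theorem does the heavy lifting. The only thing to verify is that $A^\circ+B^\circ$ is again a convex cone so that its bipolar equals precisely its closure; this is why the closure on the right-hand side cannot in general be dropped (without additional assumptions such as a constraint qualification ensuring $A^\circ+B^\circ$ is itself closed).
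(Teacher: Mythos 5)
Your proof is correct and is exactly the standard duality argument: the first identity comes straight from the definition of the polar, and the second follows from the first applied to $A^\circ$, $B^\circ$ together with the bipolar theorem $C^{\circ\circ}=\cl C$ for the convex cone $A^\circ+B^\circ$ (which contains the origin, so no extra care is needed there). The paper offers no proof of its own --- it simply cites \cite[Corollary 16.4.2]{Ro70} --- and your argument reproduces the textbook reasoning behind that citation; the only cosmetic caveat is that under Rockafellar's convention a convex cone need not contain the origin, but since polars are unaffected by taking closures the inclusions $A\subseteq\cl(A+B)$, $B\subseteq\cl(A+B)$ still yield $(A+B)^\circ\subseteq A^\circ\cap B^\circ$.
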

{In this paper we use Proposition \ref{calpolar} solely in situations, when the closure operation in the last formula can be omitted, namely, when either $A^\circ+B^\circ$ is a subspace or when both $A$ and $B$ are convex polyhedral cones, cf. \cite[Corollaries 19.2.2, 19.3.2]{Ro70}.}

In the following proposition we collect some important facts about the normal cone mapping to a convex polyhedral set $C$, which can be extracted from \cite{DoRo96}.
\begin{proposition}[Normal cone to a convex polyhedral set]\label{PropNormalConeMapping} Let $C\subseteq \R^d$ be a polyhedral convex set and $(\zb,\zba)\in\Gr N_C$. Then for all $z\in C$ sufficiently close to $\zb$ we have
\begin{equation}\label{EqIncLTanNormalConePoly}T_C(z) \supseteq  T_C(\zb),\quad N_C(z)\subseteq N_C(\zb).\end{equation}
Further, there exists a neighborhood $W$ of $(\zb,\zba)$ such that
\begin{equation*}
\Gr N_C \cap W=((\zb,\zba)+\Gr N_{\K_C(\zb,\zba)})\cap W.
\end{equation*}
In particular we have
\begin{equation}\label{EqTanNormalConePolyhedral}
  T_{\Gr N_C}(\zb,\zba)=\Gr N_{\K_C(\zb,\zba)}.
\end{equation}
Further,
\begin{equation}\label{EqNormalConePolyhedral}
  \widehat N_{\Gr N_C}(\zb,\zba)=(\K_C(\zb,\zba))^\circ\times \K_C(\zb,\zba)
\end{equation}
and the limiting normal cone $N_{\Gr N_C}(\zb,\zba)$ is the union of all sets of the form
\begin{equation*}(F_1-F_2)^\circ\times(F_1-F_2)\end{equation*}
where $F_2\subseteq  F_1$ are faces of $\K_C(\zb,\zba)$.
\end{proposition}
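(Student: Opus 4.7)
The plan is to establish all five assertions in one go by first proving the local reduction formula for $\Gr N_C$ near $(\zb,\zba)$ and then deducing the rest by elementary cone calculus and the already-proved Lemma~\ref{Normalcones}. Throughout I write $K:=\K_C(\zb,\zba)$ and fix a representation $C=\{z\mv \skalp{a_i,z}\leq \alpha_i,\ i=1,\ldots,p\}$ with active index set $I(\zb):=\{i\mv \skalp{a_i,\zb}=\alpha_i\}$.

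For the inclusions \eqref{EqIncLTanNormalConePoly}, I would rely on the standard polyhedral formulas $T_C(z)=\{v\mv \skalp{a_i,v}\leq 0,\ i\in I(z)\}$ and $N_C(z)=\cone\{a_i\mv i\in I(z)\}$. By continuity of $\skalp{a_i,\cdot}$ we have $I(z)\subseteq I(\zb)$ for $z$ close enough to $\zb$, which delivers both inclusions. The main work sits in the reduction claim
\[
\Gr N_C \cap W \;=\; \bigl((\zb,\zba)+\Gr N_K\bigr)\cap W,
\]
for a small neighborhood $W$ of $(\zb,\zba)$. Because $C$ is convex polyhedral, $\Gr N_C$ is a \emph{finite} union of pieces of the form $F\times N_C(F)$, where $F$ ranges over the faces of $C$. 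Only the faces $F$ with $\zb\in F$ and $\zba\in N_C(F)$ contribute to the local structure near $(\zb,\zba)$, and these are exactly the ones whose tangent data at $\zb$ is built from indices in the set $I^=(\zb,\zba):=\{i\in I(\zb)\mv \skalp{a_i,v}=0\ \forall v\in T_C(\zb)\cap[\zba]^\perp\}$. Carrying out this identification carefully—comparing $T_C(z)$ at points $z$ in such a face with $T_K(z-\zb)$ and $N_C(z)$ with $N_K(z-\zb)$ using the active-set description above—gives the reduction, and this is the only nontrivial computation. This is the main obstacle; I would either cite~\cite{DoRo96} for it directly or give the index-set bookkeeping sketched here.

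Once the reduction is in place, \eqref{EqTanNormalConePolyhedral} is immediate: since $\Gr N_K$ is a cone (the positive homogeneity $N_K(tv)=N_K(v)$ for $t>0$ and $v\in K$ makes $\Gr N_K$ invariant under scaling), Proposition~\ref{closedcone} gives $T_{\Gr N_K}(0,0)=\Gr N_K$, and the reduction converts this into $T_{\Gr N_C}(\zb,\zba)=\Gr N_K$. Formula \eqref{EqNormalConePolyhedral} then follows by a short polarity calculation: a pair $(v^*,w^*)$ lies in $(\Gr N_K)^\circ$ iff $\skalp{v^*,v}+\skalp{w^*,w}\leq 0$ for every $(v,w)\in\Gr N_K$; specializing to $w=0$ forces $v^*\in K^\circ$, specializing to $v=0$ and using $K^{\circ\circ}=K$ forces $w^*\in K$, while the converse inclusion is obvious from the complementarity $\skalp{v,w}=0$ defining $\Gr N_K$. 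Hence $\widehat N_{\Gr N_C}(\zb,\zba)=K^\circ\times K$.

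For the limiting normal cone description, I would apply Lemma~\ref{Normalcones} to $\Gr N_C$, which is locally polyhedral, to obtain
\[
N_{\Gr N_C}(\zb,\zba)=\bigcup_{(v,w)\in T_{\Gr N_C}(\zb,\zba)}\widehat N_{T_{\Gr N_C}(\zb,\zba)}(v,w)=\bigcup_{(v,w)\in\Gr N_K}\widehat N_{\Gr N_K}(v,w),
\]
by \eqref{EqTanNormalConePolyhedral}. Applying \eqref{EqNormalConePolyhedral} iteratively—this time with the polyhedral cone $K$ in place of $C$ and with base point $(v,w)$—the inner set equals $\bigl(\K_K(v,w)\bigr)^\circ\times \K_K(v,w)$. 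To finish, I invoke the well-known face characterization of critical cones of a polyhedral convex cone: the sets $\K_K(v,w)=T_K(v)\cap[w]^\perp$ obtained as $(v,w)$ ranges over $\Gr N_K$ are precisely the sets $F_1-F_2$ with $F_2\subseteq F_1$ faces of $K$ (the face $F_1$ being the smallest face of $K$ containing $v$, and $F_2$ the smallest face containing the support set $K\cap[w]^\perp$ of the exposed face determined by $w$). Plugging this into the above union yields exactly the claimed description.
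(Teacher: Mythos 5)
The paper gives no proof of this proposition at all---it is presented as a collection of facts ``extracted from'' \cite{DoRo96}---so your proposal is by necessity a different route: an actual derivation. Its structure is sound. You correctly isolate the local reduction $\Gr N_C\cap W=((\zb,\zba)+\Gr N_{\K_C(\zb,\zba)})\cap W$ as the one genuinely hard ingredient and, like the paper, ultimately delegate it to \cite{DoRo96}; everything else then follows from material the paper has already recorded: \eqref{EqTanNormalConePolyhedral} from Proposition \ref{closedcone} applied to the closed cone $\Gr N_{\K_C(\zb,\zba)}$, \eqref{EqNormalConePolyhedral} from a direct polarity computation (correct as written, though the converse inclusion needs only $v\in K$ and $w\in N_K(v)\subseteq K^\circ$, not the complementarity you invoke), and the limiting normal cone from Lemma \ref{Normalcones} combined with a second application of \eqref{EqNormalConePolyhedral} to the polyhedral cone $K$ itself, with Proposition \ref{PropBasicPropCone} supplying the two-way correspondence between critical cones $\K_K(v,w)$ and differences of faces. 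What this buys over the paper's bare citation is a proof that is self-contained modulo a single reduction lemma, which is a reasonable granularity. One small correction: in your final parenthetical the roles of $F_1$ and $F_2$ are swapped. For $(v,w)\in\Gr N_K$ one has $\K_K(v,w)=F_1-F_2$ with $F_2$ the minimal face of $K$ containing $v$ (so $v\in\ri F_2$) and $F_1=K\cap[w]^\perp\supseteq F_2$; with your assignment the required containment $F_2\subseteq F_1$ would fail. This slip does not affect the resulting union, since Proposition \ref{PropBasicPropCone} provides the correspondence in both directions, but it should be fixed if the argument is written out in full.
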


\begin{definition}[Recession Cone](\cite[page 61]{Ro70})
Let $C\subseteq  \mathbb{R}^d$ be a closed convex set. The recession cone of $C$ is a closed convex cone defined as
$
0^+C := \{ y\in \mathbb{R}^d | x+\lambda y \in C \quad \forall \lambda \geq 0, x\in C\}.
$\end{definition}
%\noindent It is easy to see that  if $C$ is a closed convex cone, then $0^+C=C$. Moreover by \cite[Theorem 8.1]{Ro70},
%$ C+0^+C\subset C.$

%\begin{definition}(\cite[page 65]{Ro70}) Given a nonempty convex set $C\subseteq \R^d$, the set  $(-0^+C)\cap 0^+C$ is a subspace called the lineality space of $C$. In the case where $C$ is a convex cone,  the lineality space  of $C$ is the largest subspace contained in $C$.
%\end{definition}
%The following concept extends the lineality space of a convex set to an arbitrary set.
\begin{definition}[Generalized lineality space] Given an arbitrary set $C\subseteq \R^d$, we call a subspace $L$ the generalized lineality space of $C$ and denote it by  $\Lsp(C)$ provided that it is   the largest subspace $L\subseteq \R^d$ such that
$C+L\subseteq C.$
\end{definition}
\noindent Note that $\Lsp(C)$ is well defined because for two subspaces $L_1,L_2$ fulfilling $C+L_i\subseteq C$, $i=1,2$ we have
$C+L_1+L_2=(C+L_1)+L_2\subseteq C+L_2\subseteq C$ and hence we can always find a largest subspace satisfying $C+L\subseteq C$ since the dimension $\R^d$ is finite.
 Note that since $0$ is in every subspace  we have $C+L\supseteq C$ and thus $C+\Lsp(C)=C$.  In the case where $C$ is a convex set, the generalized lineality space reduces to the lineality space as defined in \cite[page 65]{Ro70} and can be calculated as $\Lsp(C)=(-0^+C)\cap 0^+C$. In the case where $C$ is a convex cone,  the lineality space  of $C$ is the largest subspace contained in $C$ and can be calculated as $\Lsp(C)=(-C)\cap C$.
%In the case where $C$ is a convex cone,  the lineality space  of $C$ is the largest subspace contained in $C$ and is given by $\Lsp(C)=C\cap(-C)$.

By definition  of the generalized lineality space and the tangent cone, it is easy to verify that for every $\zb\in C$ we have
\begin{equation}\Lsp(C)\subseteq\Lsp\big(T_C(\zb)\big). \label{inclusion} \end{equation}

For a closed convex set $C$ and $(\zb,\zba)\in \Gr N_C$ we have $\Lsp\big(T_C(\zb)\big)\subseteq [\zba]^\perp$ and thus
\begin{align}\label{EqLspCritCone}\Lsp\big(\K_C(\zb,\zba)\big)&= \big(T_C(\zb)\cap[\zba]^\perp\big)\cap\big(-T_C(\zb)\cap[\zba]^\perp\big)=T_C(\zb)\cap(-T_C(\zb)\big)\cap[\zba]^\perp\\
\nonumber&=\Lsp\big(T_C(\zb)\big)
\end{align}

\begin{definition}[Affine Hull] For a closed convex set $C$ we denote by
\[C^+:=\Span (C - C)\]
the unique subspace parallel to the affine hull of $C$.
\end{definition}
\noindent If $K$ is a closed convex cone then we always have
$K^+=K-K.$

{Let $C$ be a closed convex set. Then the tangent cone $T_C(\bar z)$ is a closed convex cone for any $\bar z\in C$.
Since $C\subset \bar z+T_C(\bar z)$ and
$T_C(\bar z)=\limsup_{t\downarrow 0}(C-\bar z)/t\subseteq \Span(C-C)=C^+$ for any $\bar z\in C$, we have
\[C^+=\Span (C-C)\subseteq \Span (T_C(\bar z) -T_C(\bar z))=T_C(\bar z) -T_C(\bar z)\subseteq C^+-C^+=C^+.\]
It follows that for a closed convex set $C$ and  every $\zb\in C$ we have
\begin{equation}\label{affinehull}
C^+=T_C(\zb)-T_C(\zb)=T_C(\zb)^+\end{equation}
and for every $(\zb,\zba)\in \Gr N_C$
\[\K_C(\zb,\zba)^+=T_C(\zb)\cap[\zba]^\perp-T_C(\zb)\cap[\zba]^\perp\subseteq T_C(\zb)^+ \cap [\zba]^\perp.\]}

\if{Since $C\subset \bar z+T_C(\bar z)$ for any $\bar z \in C$,  we have
$$C^+=\Span (C-C)\subseteq \Span (T_C(\bar z) -T_C(\bar z))=T_C(\bar z) -T_C(\bar z).$$
It follows that for a closed convex set $C$ and  every $\zb\in C$ we have
\[C^+=T_C(\zb)-T_C(\zb)=T_C(\zb)^+\]
and for every $(\zb,\zba)\in \Gr N_C$
\[\K_C(\zb,\zba)^+=T_C(\zb)\cap[\zba]^\perp-(T_C(\zb)\cap[\zba]^\perp)\subseteq T_C(\zb)^+ \cap [\zba]^\perp.\]}
\fi

For every closed convex set $C$ and every $\zb\in C$ we have by virtue of Proposition \ref{Prop1} that
\begin{align}\label{EqAnnLspT_C}\Lsp\big(T_C(\zb)\big)^\perp=\big(T_C(\zb)\cap(-T_C(\zb))\big)^\circ={\cl\big(N_C(\zb)-N_C(\zb)\big)}=N_C(\zb)^+,\end{align}

%\begin{align}\label{EqAnnLspT_C}\Lsp\big(T_C(\zb)\big)^\perp=\big(T_C(\zb)\cap(-T_C(\zb))\big)^\circ=N_C(\zb)-N_C(\zb)=N_C(\zb)^+,\end{align}
and hence
\begin{align}\label{EqLspT_C}\Lsp\big(T_C(\zb)\big)=\big(N_C(\zb)^+\big)^\perp.\end{align}
Further, by virtue of Proposition \ref{Prop1} we have
\[\Lsp(N_C(\zb))=\big(T_C(\zb)\big)^\circ\cap\big(-T_C(\zb)\big)^\circ=(T_C(\zb)-T_C(\zb))^\circ=(T_C(\zb)^+)^\perp=(C^+)^\perp\]
implying
\begin{equation}\label{cc+}\Lsp(\Gr N_C)=\Lsp(C)\times (C^+)^\perp.\end{equation}

If $C$ is convex polyhedral, then for every $(\zb,\zba)\in\Gr N_C$ we obtain by virtue of \eqref{EqTanNormalConePolyhedral}, (\ref{cc+}),  \eqref{EqLspCritCone} and \eqref{EqLspT_C}
\begin{align}\label{EqLspTanGraphNormalC}
\Lsp\big(T_{\Gr N_C}(\zb,\zba)\big)&=\Lsp\big(\Gr N_{\K_C(\zb,\zba)}\big)=\Lsp\big(\K_C(\zb,\zba)\big)\times \big(\K_C(\zb,\zba)^+\big)^\perp \\
\nonumber &= \Lsp(T_C(\zb))\times \big(\K_C(\zb,\zba)^+\big)^\perp=\big(N_C(\zb)^+\big)^\perp\times \big(\K_C(\zb,\zba)^+\big)^\perp.
\end{align}
Further, for every $(\delta \zb,\delta\zba)\in \Gr N_{\K_C(\zb,\zba)}$, by (\ref{inclusion})  we have
\begin{align}\label{EqLspTanGrNormalCritCone}
  \Lsp\big(T_{\Gr N_{\K_C(\zb,\zba)}}(\delta\zb,\delta\zba)\big)& \supseteq \Lsp\big(\Gr N_{\K_C(\zb,\zba)}\big)=\Lsp\big(\K_C(\zb,\zba)\big)\times\big(\K_C(\zb,\zba)^+\big)^\perp\\
  &=\Lsp\big(T_C(\zb)\big)\times \big(\K_C(\zb,\zba)^+\big)^\perp=\Lsp\big(T_{\Gr N_C}(\zb,\zba)\big),\nonumber
\end{align}
where the equalities follow from (\ref{EqLspTanGraphNormalC}).

\if{
Note that for every closed convex cone $K$ we have $(K^\circ)^-=\{v^\ast \mv\skalp{v^\ast,v}=0 \forall v\in K\}=(K^+)^\perp$, and
\begin{equation}\label{EqSubSpaceN_K}\Gr N_K+\big(K^-\times(K^\circ)^-\big)\subseteq \Gr N_K.\end{equation}
Moreover, for a closed convex set $C$ and $z^\ast\in  N_C(z)$ we have
\[\Big(\big(\K_C(z,z^\ast)\big)^-\Big)^\circ=\Big(T_C(z)\cap\big(-T_C(z)\big) \cap [z^\ast]^\perp\Big)^\circ=N_C(z)+(-N_C(z))+[z^\ast]=\big(N_C(z)\big)^+.\]}\fi

In the following proposition we recall some basic properties of convex polyhedral cones.
\begin{proposition}\label{PropBasicPropCone}
  Consider two finite index sets $I_1$, $I_2$, vectors $a_i\in\R^n$, $i\in I_1\cup I_2$ and let
  \[K:=\left \{v\mv a_i^Tv\begin{cases}=0&i\in I_1,\\\leq 0&i\in I_2\end{cases}\right \}.\]
  Then
  \[\Lsp(K)=\left \{v\mv a_i^Tv=0,\ i\in I_1\cup I_2 \right \},\ K^+\subseteq \left \{v\mv a_i^Tv=0,\ i\in I_1 \right \}\]
  and for every $v\in K$ we have
  \[T_K(v)= \left \{u\mv a_i^Tu\begin{cases}=0&i\in I_1\\\leq 0&i\in I(v)\setminus I_1\end{cases}\right\},\quad N_K(v)=\left \{\sum_{i\in I(v)}\mu_ia_i\mv \mu_i\geq 0, i\in I(v)\setminus I_1 \right \}\]
  where $I(v):=\{i\in I_1\cup I_2\mv a_i^Tv=0\}$. Further, for every $z^*\in N_K(v)$ there is an index set $\I$ with $I_1\subseteq  \I\subseteq  I(v)$ such that
  \[\K_K(v,z^*)=\Big\{u\mv a_i^Tu\begin{cases}=0&i\in\I\\\leq 0&i\in I(v)\setminus \I\end{cases}\Big\}\]
  and vice versa. The faces of $K$  are given by the sets
  \[\F=\left \{u\mv a_i^Tu\begin{cases}=0&i\in\I\\\leq 0&i\in I_2\setminus \I\end{cases}\right \},\mbox{ where } \I \mbox{ satisfies } \ I_1\subseteq \I\subseteq  I_1\cup I_2.\]
  For all $v\in K$, the following face of $K$ defined by
  \[\F_v:=\left \{u\mv a_i^Tu\begin{cases}=0&i\in I(v)\\\leq 0&i\in I_2\setminus I(v)\end{cases}\right \}\]
 {is} the  unique face  satisfying $v\in\ri \F_v$.
  Consequently, for all $v\in K$ and all faces $\F_1,\F_2$ of $K$ such that $v\in\ri \F_2\subseteq  \F_1$ there is some index set $\I$, $I_1\subset\I\subseteq  I(v)$ such that
  \[\F_1-\F_2=\Big\{u\mv a_i^Tu\begin{cases}=0&i\in\I\\\leq 0&i\in I(v)\setminus \I\end{cases}\Big\},\]
  which is the same as saying that there is some $z^*\in N_K(v)$ with $\F_1-\F_2=\K_K(v,z^*)$.
  \end{proposition}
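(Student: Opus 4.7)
The proof is essentially a careful bookkeeping exercise combining elementary polyhedral theory with Farkas-type arguments. I would proceed in the order the claims are stated, since each step reuses the preceding ones.

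First I would dispatch the two statements about $\Lsp(K)$ and $K^+$. For $\Lsp(K)$, if $v$ lies in a subspace $L\subseteq K$ then $-v\in K$ as well, so $a_i^T v\le 0$ and $a_i^T(-v)\le 0$ for $i\in I_2$, forcing equality; together with the defining equalities on $I_1$ this gives the claimed inclusion. The reverse inclusion is immediate since the described set is a subspace contained in $K$. For $K^+=K-K$, every difference $v-v'$ of points in $K$ annihilates each $a_i$, $i\in I_1$.

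Next I would prove the tangent cone formula (item 3) by the standard polyhedral linearization argument: the inclusion $\supseteq$ is clear because small positive steps along any admissible $u$ keep $v+tu\in K$ (the strictly inactive constraints remain strictly inactive for small $t$, while active $\le$-constraints move nonpositively and equality constraints are preserved); the inclusion $\subseteq$ follows because any tangent direction $u=\lim u_k$ with $v+t_k u_k\in K$ satisfies $a_i^T u\le 0$ on $I(v)\cap I_2$ and $a_i^T u=0$ on $I_1$ by passing to the limit in $(a_i^T(v+t_k u_k)-a_i^T v)/t_k$. Then $N_K(v)$ (item 4) is the polar of $T_K(v)$, and a direct application of the Farkas lemma to the homogeneous linear system defining $T_K(v)$ yields the claimed finitely generated form. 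The critical cone description (item 5) follows by writing $z^*=\sum_{i\in I(v)}\mu_i a_i$ and setting $\I:=I_1\cup\{i\in I(v)\setminus I_1\mid \mu_i>0\}$: then $u\in T_K(v)$ satisfies $\skalp{z^*,u}=0$ iff $a_i^T u=0$ for all $i\in\I$, because each $\mu_i a_i^T u$ is nonpositive in the sum. For the converse, given any such $\I$ one constructs $z^*=\sum_{i\in\I\setminus I_1}a_i$ (plus an arbitrary element of $\Span\{a_i:i\in I_1\}$) to realize the critical cone.

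The face statements (items 6--8) are then derived from the cone description. A standard fact is that every face $\F$ of $K$ is of the form $\F=K\cap H$ for a supporting hyperplane, equivalently $\F=\{u\in K\mid a_i^T u=0,\ i\in\I\setminus I_1\}$ for some $\I$ with $I_1\subseteq \I\subseteq I_1\cup I_2$; I would verify this by showing that such sets are indeed faces and that conversely any exposed face has this form (which for polyhedral cones exhausts all faces). The uniqueness of $\F_v$ with $v\in\ri\F_v$ follows because $v\in\ri\F$ forces the "equality index set" of $\F$ to coincide with $I(v)$. Finally, item 8 combines 5 and 6: given $\F_2\subseteq\F_1$ with $v\in\ri\F_2$, both faces have the same equality set on $I(v)$ shape, so $\F_1-\F_2$ is a cone of exactly the form appearing in item 5, and the explicit construction of $z^*$ from the indicator of the indices in $\I\setminus I_1$ gives $\K_K(v,z^*)=\F_1-\F_2$.

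The only step that requires genuine care rather than routine verification is the Farkas argument for the normal cone and the reverse direction of the critical-cone characterization, since there one has to match a given $\I$ with an actual multiplier vector $z^*\in N_K(v)$; everything else is essentially case-chasing on index sets. I expect the writeup to be short because each item reduces to the linear-algebraic structure of the defining inequalities.
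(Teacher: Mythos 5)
The paper offers no proof of this proposition at all: it is introduced with ``In the following proposition we recall some basic properties of convex polyhedral cones'' and the facts are treated as standard (essentially extractable from Rockafellar's \emph{Convex Analysis} and the polyhedral theory used in \cite{DoRo96,Gfr14a}). Your write-up therefore cannot be compared with an argument in the paper; judged on its own, it is a correct and appropriately organized elementary derivation: lineality space and $K^+$ from the definition, the tangent cone by the standard polyhedral linearization, the normal cone as the polar via Farkas, the critical cone by splitting $\skalp{z^*,u}$ into nonpositive summands, and the face statements via equality index sets. The one place where you should add a line of detail is the final claim $\F_1-\F_2=\K_K(v,z^*)$: the inclusion $\F_1-\F_2\subseteq\{u\mv a_i^Tu=0,\ i\in\I,\ a_i^Tu\le 0,\ i\in I(v)\setminus\I\}$ is immediate from the equality sets of the two faces, but the reverse inclusion needs an explicit construction, e.g.\ writing a given $u$ in that cone as $(u+tv)-tv$ with $t>0$ large enough that the constraints indexed by $I_2\setminus I(v)$ (where $a_i^Tv<0$ strictly) are absorbed, so that $u+tv\in\F_1$ and $tv\in\F_2$. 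With that step made explicit, and with the routine verification that $v\in\ri\F_v$ (a relatively open neighborhood of $v$ in $\{u\mv a_i^Tu=0,\ i\in I(v)\}$ lies in $\F_v$), your proof is complete.
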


The following lemma will be useful for our analysis:
\begin{lemma}\label{LemAux}Let $\tilde P=(\tilde P_1,\tilde P_2):\R^d\to\R^s\times\R^s$ be continuously differentiable, let $C\subseteq  \R^s$ be a polyhedral convex set and let $\zb\in \R^d$ with
$\tilde P(\zb)\in \tilde D:=\Gr N_C$ be given. Further assume that we are given two subspaces $L_1\supseteq \big(N_C(\tilde P_1(\zb))\big)^+$ and $L_2\supseteq \big(\K_C(\tilde P_1(\zb),\tilde P_2(\zb))\big)^+$ such that
\begin{equation}\label{EqRelaxedMo} \ker\nabla\tilde P(\zb)^T\cap (L_1\times L_2)=\{(0,0)\}.\end{equation}
Then the mapping $z\rightrightarrows \tilde P(z)-\tilde D$ is metrically regular around $(\zb,0)$,
\begin{equation}\label{EqTanPropAux}T_{\{z\mv \tilde P(z)\in\tilde D\}}(\zb)=\{w\mv \nabla\tilde P(\zb)w\in T_{\tilde D}(\tilde P(\zb))\}\end{equation}
and
\begin{eqnarray}\label{EqNormalPropAux}
\lefteqn{\widehat N_{\{z\mv \tilde P(z)\in\tilde D\}}(\zb)= \nabla \tilde P(\zb)^T \widehat N_{\tilde D}(\tilde P(\zb))}\nonumber\\
&&=
 \nabla \tilde P_1(\zb)^T\big(\K_C(\tilde P_1(\zb),\tilde P_2(\zb))\big)^\circ+\nabla \tilde P_2(\zb)^T\K_C(\tilde P_1(\zb),\tilde P_2(\zb)).\end{eqnarray}
\end{lemma}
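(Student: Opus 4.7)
The plan is to verify the Mordukhovich criterion \eqref{EqMoCrit} for the mapping $z\rightrightarrows\tilde P(z)-\tilde D$ at $(\zb,0)$, so that metric regularity follows, and then to derive \eqref{EqTanPropAux} and \eqref{EqNormalPropAux} from the standard chain rules available under metric regularity.

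The key step is the inclusion $N_{\tilde D}(\tilde P(\zb))\subseteq L_1\times L_2$, since under this inclusion \eqref{EqRelaxedMo} immediately implies \eqref{EqMoCrit}. By Proposition \ref{PropNormalConeMapping} every element $(w_1^\ast,w_2^\ast)\in N_{\tilde D}(\tilde P(\zb))$ belongs to $(F_1-F_2)^\circ\times(F_1-F_2)$ for some faces $F_2\subseteq F_1$ of $\K:=\K_C(\tilde P_1(\zb),\tilde P_2(\zb))$, and the inclusion $w_2^\ast\in F_1-F_2\subseteq\K-\K=\K^+\subseteq L_2$ is immediate. For the first component I would represent $C=\{y\mv b_j^Ty\leq\beta_j,\ j\in J\}$, let $J_0\subseteq J$ denote the indices active at $\tilde P_1(\zb)$ so that $N_C(\tilde P_1(\zb))^+=\Span\{b_j\mv j\in J_0\}$, and write $\tilde P_2(\zb)=\sum_{j\in J_0}\mu_j b_j$ with $\mu_j\geq 0$. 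Setting $J_+:=\{j\in J_0\mv\mu_j>0\}$, the complementarity $\langle\tilde P_2(\zb),v\rangle=0$ combined with $b_j^Tv\leq 0$ on $T_C(\tilde P_1(\zb))$ forces $b_j^Tv=0$ for $j\in J_+$, placing $\K$ into the normal form of Proposition \ref{PropBasicPropCone} with $I_1=J_+$, $I_2=J_0\setminus J_+$ and $a_i=b_i$. That proposition then describes every face difference $F_1-F_2$ as $\{u\mv b_j^Tu=0,\ j\in\I;\ b_j^Tu\leq 0,\ j\in I(v)\setminus\I\}$ for some $\I\subseteq I(v)\subseteq J_0$, whose polar equals $\Span\{b_j\mv j\in\I\}+\cone\{b_j\mv j\in I(v)\setminus\I\}\subseteq\Span\{b_j\mv j\in J_0\}=N_C(\tilde P_1(\zb))^+\subseteq L_1$, as required.

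With metric regularity established, \eqref{EqTanPropAux} follows by the standard chain rule: the inclusion $\subseteq$ is immediate from continuous differentiability, and the reverse inclusion uses metric subregularity via the classical lifting argument based on $\dist{\zb+t_kw,E}\leq\kappa\,\dist{\tilde P(\zb+t_kw),\tilde D}=\oo(t_k)$. Polarizing \eqref{EqTanPropAux} and invoking the regular normal cone chain rule that holds under metric regularity (cf.\ \cite{Mor,RoWe98}) gives $\widehat N_{\{z\mv\tilde P(z)\in\tilde D\}}(\zb)=\nabla\tilde P(\zb)^T\widehat N_{\tilde D}(\tilde P(\zb))$. Substituting $\widehat N_{\tilde D}(\tilde P(\zb))=\K^\circ\times\K$ from \eqref{EqNormalConePolyhedral}, and using $\nabla\tilde P(\zb)^T(w_1^\ast,w_2^\ast)=\nabla\tilde P_1(\zb)^Tw_1^\ast+\nabla\tilde P_2(\zb)^Tw_2^\ast$, yields the decomposition asserted in \eqref{EqNormalPropAux}. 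The main obstacle is the polyhedral bookkeeping in the verification of the Mordukhovich criterion; once that is in place, the tangent and normal cone formulas follow from well-known arguments.
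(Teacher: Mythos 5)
Your verification of the Mordukhovich criterion is correct and essentially a hands-on version of the paper's argument: the paper gets the containment $(F_1-F_2)^\circ\times(F_1-F_2)\subseteq \big(N_C(\tilde P_1(\zb))\big)^+\times\tilde K^+$ abstractly from $\Lsp(\tilde K)\subseteq F_1-F_2\subseteq\tilde K^+$ together with \eqref{EqLspCritCone} and \eqref{EqAnnLspT_C}, whereas you reprove it by writing out the inequality representation of $C$; both are fine, and your derivation of \eqref{EqTanPropAux} matches the paper's appeal to the Henrion--Outrata result.

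The gap is in the last step. There is no ``regular normal cone chain rule that holds under metric regularity'': metric regularity gives only $\widehat N_\Omega(\zb)=\big(\TlinO(\zb)\big)^\circ\supseteq\nabla\tilde P(\zb)^T\widehat N_{\tilde D}(\tilde P(\zb))$, and the reverse inclusion can fail because $T_{\tilde D}(\tilde P(\zb))=\Gr N_{\K_C(\tilde P_1(\zb),\tilde P_2(\zb))}$ is in general non-convex --- the paper explicitly warns about this right after \eqref{EqInclRegNormalCone}, where it is noted that $\cl\big(\nabla P(\zb)^T\widehat N_D(P(\zb))\big)=\{w\mv\nabla P(\zb)w\in\cl\co T_D(P(\zb))\}^\circ$. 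A concrete failure: take $C=\R_+$, $\tilde P(z)=(z,-z)$, $\zb=0$. Then $\tilde D=\Gr N_{\R_+}$, the Mordukhovich criterion holds (so the mapping is metrically regular and \eqref{EqTanPropAux} is valid with both sides equal to $\{0\}$), yet $\widehat N_{\{z\mv\tilde P(z)\in\tilde D\}}(0)=\R$ while $\nabla\tilde P(0)^T\widehat N_{\tilde D}(0,0)=\R_- +(-\R_+)... $ more precisely $\widehat N_{\tilde D}(0,0)=\R_-\times\R_+$ and $\nabla\tilde P(0)^T(a,b)=a-b$, giving only $\R_-$. Of course \eqref{EqRelaxedMo} fails in this example, so the lemma is not contradicted; but your argument uses only metric regularity at this step and would therefore ``prove'' the false equality here. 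To close the gap you must use the full strength of \eqref{EqRelaxedMo} a second time: as in the paper, combine it with the lineality-space formula \eqref{EqLspTanGraphNormalC}, which yields $\Lsp\big(T_{\tilde D}(\tilde P(\zb))\big)\supseteq L_1^\perp\times L_2^\perp$ and hence $\range\nabla\tilde P(\zb)+\Lsp\big(T_{\tilde D}(\tilde P(\zb))\big)=\R^s\times\R^s$, which is exactly the hypothesis of \cite[Theorem 4]{GfrOut16b} guaranteeing the equality $\widehat N_\Omega(\zb)=\nabla\tilde P(\zb)^T\widehat N_{\tilde D}(\tilde P(\zb))$; the second line of \eqref{EqNormalPropAux} then follows from \eqref{EqNormalConePolyhedral} as you say.
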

\begin{proof}
  In order to prove metric regularity of the mapping $\tilde P(\cdot)-\tilde D$ we invoke the Mordukhovich criterion \eqref{EqMoCrit}, which reads in our case as
  \begin{equation}\label{EqAuxMordukhovich}\nabla \tilde P_1(\zb)^Tw^\ast+\nabla \tilde P_2(\zb)^Tw=0,\ (w^\ast,w)\in N_{\Gr N_C}(\tilde P(\zb))\ \Rightarrow\ (w^\ast, w)=0.\end{equation}
  Consider $(w^\ast,w)\in N_{\Gr N_C}(\tilde P(\zb))$. By Proposition \ref{PropNormalConeMapping} there are faces $F_1,F_2$ of the convex polyhedral cone $\tilde K:=\K_C(\tilde P_1(\zb),\tilde P_2(\zb))$ such that  $(w^\ast,w)\in (F_1-F_2)^\circ\times (F_1-F_2)$. Since the lineality space of a convex polyhedral cone is always contained in any of its faces, we have
  $\Lsp(\tilde K)\subseteq F_1-F_2\subset \tilde K^+$, from which  we obtain
  \[(w^\ast,w)\in \Lsp(\tilde K)^\circ\times \tilde K^+=\Lsp(\tilde K)^\perp\times \tilde K^+=\big(N_C(\tilde P_1(\zb))\big)^+\times \big(\K_C(\tilde P_1(\zb),\tilde P_2(\zb))\big)^+\subseteq L_1\times L_2,\]
  where the second equality follows by using \eqref{EqLspCritCone} and \eqref{EqAnnLspT_C}.
  Thus \eqref{EqAuxMordukhovich} follows from \eqref{EqRelaxedMo} and the claimed property of metric regularity is established. Metric regularity in turn implies  MSCQ for the system $\tilde P(z)\in\tilde D$ at $\zb$ and \eqref{EqTanPropAux} follows from \cite[Proposition 1]{HenOut05}. In order to show \eqref{EqNormalPropAux} we will invoke \cite[Theorem 4]{GfrOut16b}. From \eqref{EqLspTanGraphNormalC} we deduce
  \[\Lsp\big(T_{\tilde D}(\tilde P(\zb))\big)= \Big(N_C\big(\tilde P_1(\zb)\big)^+\Big)^\perp\times \Big(\K_C\big(\tilde P_1(\zb),\tilde P_2(\zb)\big)^+\Big)^\perp\supseteq L_1^\perp\times L_2^\perp\]
  and together with \eqref{EqRelaxedMo} we obtain
  \begin{align*}\R^s\times\R^s&=\Big(\ker\nabla\tilde P(\zb)^T\cap(L_1\times L_2)\Big)^\perp=\range \nabla\tilde P(\zb)+ (L_1^\perp\times L_2^\perp)\\
  &\subseteq \range \nabla\tilde P(\zb)+ \Lsp\big(T_{\tilde D}(\tilde P(\zb))\big)\subseteq \R^s\times\R^s,\end{align*}
  where the second equality follows from Proposition \ref{calpolar}.
  Hence the assumption of \cite[Theorem 4]{GfrOut16b} is fulfilled and the first equation in \eqref{EqNormalPropAux} follows, whereas the second equation is a consequence of \eqref{EqNormalConePolyhedral}.
\end{proof}

\section{Optimality conditions for a set-constrained optimization problem}
In this section we consider an optimization problem of the form
\begin{eqnarray}\label{EqGenOptProbl} \min&&f(z)\\
\nonumber\mbox{s.t.} &&   P(z)\in D,
\end{eqnarray}
where $f:\R^d\to\R$ and $P:\R^d\to\R^s$ are continuously differentiable and $D
\subseteq \R^s$ is closed.

Let $\zb$ be a local minimizer and  $\Omega:=\{z\mv P(z)\in D\}$  the feasible region for the problem \eqref{EqGenOptProbl}. Then $ \nabla f(\zb){^T}u \geq 0\ \ \forall u \in T_\Omega(\zb) $ and so the following basic/geometric optimality condition holds:
\begin{equation}\label{EqBasicOpt} 0\in\nabla f(\zb)+ \widehat N_\Omega(\zb).\end{equation}  To express the basic optimality condition in terms of the problem data $P(\cdot)$ and $D$, one needs to estimate  the regular normal cone $\widehat N_\Omega(\zb)$. Given $\zb\in\Omega$ we denote the {\em linearized tangent cone to $\Omega$ at $\zb$} by
\[\TlinO(\zb):=\{u\in\R^d\mv \nabla P(\zb)u\in T_D(P(\zb))\}.\]

It is well known that the inclusions
\begin{gather}\label{EqCQ}T_\Omega(\zb)\subseteq \TlinO(\zb),\ \widehat N_\Omega(\zb)\supseteq \big(\TlinO(\zb)\big)^\circ,\\
\label{EqInclRegNormalCone}\big(\TlinO(\zb)\big)^\circ \supseteq \nabla P(\zb)^T\widehat N_D\big(P(\zb)\big)
\end{gather}
are always valid, cf. \cite[Theorems 6.31, 6.14]{RoWe98}.
Hence, if both inclusions hold with equality, the formula  $\widehat N_\Omega(\zb)=\nabla P(\zb)^T\widehat N_D\big(P(\zb)\big)$ is at our disposal and the optimality condition \eqref{EqBasicOpt} reads as
\[0\in \nabla f(\xb)+\nabla P(\zb)^T\widehat N_D\big(P(\zb)\big),\]
which is also known as {\em strong (S-) stationarity condition}, cf. \cite{FleKanOut07}.
In order to ensure equality in \eqref{EqCQ} one has to impose some constraint qualification.
\begin{definition}\label{DefCQ}
  Let  $P(\zb)\in D$.
  \begin{enumerate}
    \item[(i)] (cf. \cite{FleKanOut07}) We say that the {\em generalized Abadie constraint qualification} (GACQ) holds at $\zb$ if
    \begin{equation*}
      T_\Omega(\zb)=\TlinO(\zb).
    \end{equation*}
  \item[(ii)] (cf. \cite{FleKanOut07}) We say that the {\em generalized Guignard constraint qualification} (GGCQ) holds at $\zb$ if
    \begin{equation}
      \label{EqGGCQ}\widehat N_\Omega(\zb)=\big(\TlinO(\zb)\big)^\circ.
    \end{equation}
    \item[(iii)]  (cf. \cite{GfrMo15a}) We say that the {\em metric subregularity constraint qualification (MSCQ)} holds at $\zb$ for the system $P(z)\in D$ if the set-valued map $M(z):=P(z)-D$ is metrically subregular at $(\zb,0)$.
    \end{enumerate}
\end{definition}
There hold the following implications:
\[\text{MSCQ}\ \Longrightarrow\ \text{GACQ}\ \Longrightarrow\ \text{GGCQ}.\]
Indeed, the first implication follows from \cite[Proposition 1]{HenOut05} whereas the second one is an immediate consequence of the definition of the regular normal cone. GGCQ is the weakest of the three constraint qualifications ensuring $\widehat N_\Omega(\zb){=}%\supset
 \big(\TlinO(\zb)\big)^\circ$, but it is very difficult to verify it in general. On the other hand, MSCQ is  stronger than GGCQ but there are effective tools for verifying it.

Now let us {consider  inclusion} \if{examine conditions ensuring equality in}\fi \eqref{EqInclRegNormalCone}. By \cite[Corollary 16.3.2]{Ro70} we have
\[\cl \Big(\nabla P(\zb)^T\widehat N_D\big(P(\zb)\big)\Big)=\big\{w\mv \nabla P(\zb)w\in \cl\co T_D\big(P(\zb)\big)\big\}^\circ\]
showing that we can expect equality in \eqref{EqInclRegNormalCone} only under some restrictive assumption whenever $T_D\big(P(\zb)\big)$ is not convex. Such an assumption is e.g. provided by \cite[Theorem 4]{GfrOut16b}. If it does not hold, but MSCQ holds at $\zb$, then  it is well-known that
$ \widehat{N}_\Omega(\zb) \subseteq N_\Omega(\zb) \subseteq \nabla P(\zb)^TN_D\big(P(\zb)$ and hence the M-stationary condition
\[0\in \nabla f(\zb)+\nabla P(\zb)^TN_D\big(P(\zb)\big)\]
holds at any local optimal solution $\zb$.
For the case where the set $D$ is simple, e.g., $D:=\{(a,b)\mv 0\leq a\perp b \geq 0\}$,  the complementarity cone, the limiting normal cone can be calculated using the variational analysis (cf. \cite{Mor}) and one obtains the classical   M-stationary condition for MPCC.  However, for more complicated set $D$,  e.g., $D:=\Gr \widehat N_\Gamma\times \R^p_-$, usually very strong assumptions are required for using these calculus rules limiting considerably their applicability; see e.g.  Gfrerer and Outrata \cite[Theorem 4]{GfrOut16a} .

Recently an alternative approach is taken by Gfrerer in \cite{Gfr18}.
Under GGCQ, by (\ref{EqGGCQ}) for every regular normal $z^\ast\in \widehat N_\Omega(\zb)$ the point $u=0$ is a global minimizer for the problem
\[\min_u -{z^*}^Tu \quad\text{subject to}\quad \nabla P(\zb)u\in T_D\big(P(\zb)\big).\]
Provided that the mapping $u\rightrightarrows \nabla P(\zb)u - T_D\big(P(\zb)\big)$ is metrically subregular at $(0,0)$ we can apply the M-stationarity conditions to this linearized problem which read as
\[z^\ast\in \nabla P(\zb)^TN_{T_D\big(P(\zb)\big)}(0).\]
Thus we obtain the inclusion
$\widehat N_\Omega(\zb)\subseteq \nabla P(\zb)^TN_{T_D\big(P(\zb)\big)}(0).$ This results in
a necessary optimality condition \begin{equation}\label{shaperc}
0\in \nabla f(\zb)+ \nabla P(\zb)^TN_{T_D\big(P(\zb)\big)}(0),\end{equation}
which is  sharper than the M-stationarity condition since
 $N_{T_D\big(P(\zb)\big)}(0)\subseteq N_D\big(P(\zb)\big)$, cf. \cite[Proposition 6.27(a)]{RoWe98}.
Although (\ref{shaperc}) is a sharper condition than the M-stationary condition, it still involves the limiting normal cone and so may be hard to calculate.
In  \cite[Propositions 1,2]{Gfr18}, Gfrerer derived the following linearized M-necessary optimality condition  which can be considered as a refinement of the necessary optimality condition   (\ref{shaperc}). The condition is easier to calculate since it involves only the regular normal cone. In fact by virtue of Lemma \ref{Normalcones}, in the case where $T_D\big(P(\zb))$ is locally polyhedral at $0$,  condition (\ref{EqKKT}) coincides with  condition (\ref{shaperc}).
\begin{theorem}\label{ThLinM_StatBasic}
  Let $\zb$ be a local optimal solution for problem  \eqref{EqGenOptProbl}. Assume that  GGCQ holds at  $\zb$ and the mapping $u\rightrightarrows \nabla P(\zb)u- T_D(P(\zb))$ is metrically subregular at $(0,0)$. Then  one of the following two conditions is fulfilled:
  \begin{enumerate}
    \item[(i)]  There is $\omega\in T_D(P(\zb))$   such that
  \begin{equation}
    \label{EqKKT}0\in \nabla f(\zb)+ \nabla P(\zb)^T \widehat N_{T_D(P(\zb))}(\omega).
  \end{equation}
    \item[(ii)]  There is $\ub\in \TlinO(\bar z)$ such that
    \begin{eqnarray}
\label{EqNotZero}&&\nabla P(\zb)\ub\not\in \Lsp(T_D(P(\zb))),\\
\label{EqKKTCritDir}      &&\nabla f(\zb) ^T \ub =0,\\
\label{EqKKTNormal}      &&0\in  \nabla f(\zb) +\widehat N_{\TlinO(\zb)}(\ub)
    \end{eqnarray}
    and $T_D(P(\zb))$ is  not locally polyhedral near $\nabla P(\zb)\ub$.
  \end{enumerate}
  If in addition $T_D(P(\zb))$ is the graph of a set-valued mapping $M=M_c+M_p$, where $M_c,M_p:\R^r\rightrightarrows \R^{s-r}$ are set-valued mappings whose graphs are closed cones, $M_p$ is polyhedral and there is some real $C$ such that
  \begin{equation}\label{EqBoundM_C} \norm{t}\leq C \norm{v}\ \forall (v,t)\in\Gr M_c\end{equation}
  then there is some $\bar v\not=0$ such that
    \begin{equation}
    \label{EqKKT_W_NotZero} \nabla P(\zb)\bar u\in\{\bar v\}\times M(\bar v).
  \end{equation}
\end{theorem}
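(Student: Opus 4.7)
My plan is to start from the basic geometric optimality condition $0\in\nabla f(\zb)+\widehat N_\Omega(\zb)$, convert it via GGCQ into a minimality statement on the linearized feasible cone, and then analyze the limiting normal cone $N_{T_D(P(\zb))}(0)$. By GGCQ we have $-\nabla f(\zb)\in(\TlinO(\zb))^\circ$, so $u=0$ minimizes the linear functional $u\mapsto\nabla f(\zb)^T u$ over the cone $\TlinO(\zb)=\{u\mv\nabla P(\zb)u\in T_D(P(\zb))\}$. The metric subregularity hypothesis on $u\rightrightarrows\nabla P(\zb)u-T_D(P(\zb))$ at $(0,0)$ is exactly MSCQ for this linearized problem, so the standard M-stationarity calculus under MSCQ yields $0\in\nabla f(\zb)+\nabla P(\zb)^T N_{T_D(P(\zb))}(0)$. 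I pick $w^*\in N_{T_D(P(\zb))}(0)$ with $-\nabla f(\zb)=\nabla P(\zb)^T w^*$ and extract sequences $\omega_k\to 0$ in $T_D(P(\zb))$ and $w_k^*\to w^*$ with $w_k^*\in\widehat N_{T_D(P(\zb))}(\omega_k)$.

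I would then proceed by a dichotomy on the local polyhedrality of $T_D(P(\zb))$ near the $\omega_k$. In Case A, where infinitely many $\omega_k$ lie in a region where $T_D(P(\zb))$ is locally polyhedral, Lemma \ref{Normalcones} and the finite face structure let me pass to a subsequence along a fixed face and obtain a single $\omega\in T_D(P(\zb))$ with $w^*\in\widehat N_{T_D(P(\zb))}(\omega)$; this delivers (i). In Case B, where $T_D(P(\zb))$ fails to be locally polyhedral near every $\omega_k$ for large $k$, I pick $u_k$ with $\nabla P(\zb)u_k=\omega_k$ and form $\bar u$ as a suitable rescaled limit chosen so that $\nabla P(\zb)\bar u$ lies outside $\Lsp(T_D(P(\zb)))$; the cone property of $T_D(P(\zb))$ preserves non-polyhedrality under rescaling, yielding the non-polyhedrality clause. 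Condition \eqref{EqKKTCritDir} comes from $\bar u$ being a critical direction at the linearized optimum (orthogonality of $\nabla f(\zb)$ with the limit direction is forced by minimality and $w_k^*\in[\omega_k]^\perp$), and \eqref{EqKKTNormal} from re-applying the basic optimality/MSCQ reasoning along $\bar u$ inside $\TlinO(\zb)$.

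For the additional statement with $M=M_c+M_p$, write $\nabla P(\zb)\bar u=(\bar v,\bar t)$ and suppose for contradiction that $\bar v=0$. The bound $\norm{t}\leq C\norm{v}$ on $\Gr M_c$ forces $M_c(0)=\{0\}$, so $\bar t\in M_p(0)$. I would then rescale the witness sequence $\omega_k=(v_k,t_k)$ from Case B by $\norm{v_k}$ instead of $\norm{\omega_k}$ to produce a limit direction with nonzero first component; the bound on $\Gr M_c$ guarantees that the $t_c^k$-contributions vanish faster than $v_k$ and the polyhedrality of $\Gr M_p$ ensures its tangent-cone limits stay polyhedral, so the non-polyhedrality of $\Gr M$ near the rescaled limit is inherited only through a direction with $\bar v\neq 0$. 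This contradicts $\bar v=0$ and establishes \eqref{EqKKT_W_NotZero}.

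The main obstacle is the construction in Case B. Verifying \eqref{EqKKTNormal} at a nonzero critical direction $\bar u$ (rather than at $u=0$) requires a second application of the linearized optimality/MSCQ argument now restricted to $\TlinO(\zb)$ at $\bar u$, which is significantly more technical than the first application. Simultaneously, the rescaling must be chosen to preserve non-polyhedrality of $T_D(P(\zb))$ near $\nabla P(\zb)\bar u$, to keep $\nabla P(\zb)\bar u\notin\Lsp(T_D(P(\zb)))$, and, in the additional structural part, to force $\bar v\neq 0$ via the bound $\norm{t}\leq C\norm{v}$ on $\Gr M_c$.
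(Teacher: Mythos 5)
First, a point of reference: this paper does not prove Theorem \ref{ThLinM_StatBasic} at all --- it is imported verbatim from \cite[Propositions 1, 2]{Gfr18} --- so there is no in-paper proof to compare against, and your proposal must be judged on its own merits.

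Judged that way, it has a genuine gap at its core. In Case B you manufacture the direction $\ub$ from the sequence $\omega_k\in T_D(P(\zb))$ that realizes $w^*\in N_{T_D(P(\zb))}(0)$ as a limit of regular normals, by ``picking $u_k$ with $\nabla P(\zb)u_k=\omega_k$''. Such $u_k$ need not exist: the points $\omega_k$ coming from the definition of the limiting normal cone are arbitrary points of $T_D(P(\zb))$ and have no reason to lie in $\range \nabla P(\zb)$. Conclusion (ii), however, requires the non-polyhedrality and the failure of membership in $\Lsp(T_D(P(\zb)))$ to be witnessed at a point of the very special form $\nabla P(\zb)\ub$ with $\ub\in\TlinO(\zb)$. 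The non-directional condition $0\in\nabla f(\zb)+\nabla P(\zb)^TN_{T_D(P(\zb))}(0)$ you start from simply does not carry this information; what is needed (and what \cite{Gfr18} actually uses) is a directional refinement --- directional limiting normal cones and directional M-stationarity for the linearized problem --- which produces the critical direction $\ub$ together with regular normals realized along the ray $t\,\nabla P(\zb)\ub$. Your Case A has a companion defect: local polyhedrality of $T_D(P(\zb))$ near each individual $\omega_k$ (with possibly shrinking neighborhoods and varying polyhedral models) gives no uniform finite face structure, and the regular normal cone map is not outer semicontinuous, so you cannot pass from $w_k^*\in\widehat N_{T_D(P(\zb))}(\omega_k)$, $\omega_k\to 0$, to $w^*\in\widehat N_{T_D(P(\zb))}(\omega)$ for a single $\omega$; Lemma \ref{Normalcones} needs local polyhedrality near the \emph{limit} point, not near the terms of the sequence. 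The final structural claim that $\bar v\not=0$ is likewise only gestured at. One correction in your favor: the step you flag as the ``main obstacle'' is not one. Once \eqref{EqKKTCritDir} is available, \eqref{EqKKTNormal} is immediate, since GGCQ and basic optimality give $-\nabla f(\zb)\in\big(\TlinO(\zb)\big)^\circ$, whence $\skalp{-\nabla f(\zb),u-\ub}=\skalp{-\nabla f(\zb),u}\leq 0$ for every $u\in\TlinO(\zb)$, i.e.\ $-\nabla f(\zb)\in\widehat N_{\TlinO(\zb)}(\ub)$; no second application of the MSCQ machinery is required. The real difficulty is the construction of $\ub$ itself, which your argument does not supply.
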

%\delHG{
%\begin{corollary}
%  Assume that both GGCQ and \eqref{EqSuffS_Stat} hold at $\zb\in\Omega$. Then
%  \[\widehat N_\Omega(\zb)=\nabla P(\zb)^T\widehat N_D\big(P(\zb)\big).\]
%  Consequently, at any local minimal solution $\zb$ of problem (\ref{EqGenOptProbl}),  the S-stationary condition holds:
%  $$0\in\nabla f(\zb)+ \nabla P(\zb)^T\widehat N_D\big(P(\zb)\big).$$
%\end{corollary}}
\begin{remark}\label{RemSubReg}
(i)   Note that the assumptions of Theorem \ref{ThLinM_StatBasic} are fulfilled if MSCQ holds at $\zb$. Indeed, MSCQ implies GGCQ and metric subregularity of $u\rightrightarrows \nabla P(\zb)u- T_D(P(\zb))$ at $(0,0)$ follows from \cite[Lemma 4]{Gfr18}.

(ii) Note that when  $T_D(P(\zb))$ is locally polyhedral near $\nabla P(\zb)\ub$, then  condition (i) holds. Otherwise, if $T_D(P(\zb))$ is not locally polyhedral near $\nabla P(\zb)\ub$, then  condition (ii) {can hold. In this case, \eqref{EqKKTCritDir} implies together with GGCQ and the basic optimality condition \eqref{EqBasicOpt} that $\bar u$ is a global solution of the problem
\[\min \nabla f(\zb)^Tu \quad \mbox{ subject to } \nabla P(\zb)u\in T_D(P(\zb)).\]}
 \if{holds. In particular (\ref{EqKKTNormal}) holds. By \cite[Theorems 6.12, 6.11]{RoWe98}, there exists a continuously differentiable function $\widetilde f : \R^{d} \rightarrow \R$ with  $\nabla \widetilde{f}(\ub)=  \nabla f(\zb)$ such that $\ub$ is a global minimizer of the problem
 $$ \min \widetilde  f(u) \quad \mbox{ subject to } \nabla P(\zb)u\in T_D(P(\zb)). $$}\fi
 Now since the graph of the mapping $u\rightrightarrows \nabla P(\zb)u-T_D(P(\zb))$ is a closed cone,  by virtue of  \cite[Lemma 3]{Gfr18}, the metric subregularity of $u\rightrightarrows \nabla P(\zb)u-T_D(P(\zb))$ at $(0,0)$ implies the metric subregularity of the same mapping at $(\bar u,0)$. Hence we can apply Theorem \ref{ThLinM_StatBasic} once more to the above problem. If $T_{T_D(P(\zb))}(\nabla P(\zb)\bar u)$ is   %\delHG{locally}
 polyhedral,
 %\delHG{ near  $\nabla P(\zb)\bar u$},
 then Theorem \ref{ThLinM_StatBasic}(i) applies and we obtain
{the} %there
 existence of $\omega\in T_{T_D(P(\zb))}(\nabla P(\zb)\bar u)$   such that
  \begin{equation}
    \label{EqKKTnew}0\in \nabla f(\zb)+ \nabla P(\zb)^T \widehat N_{T_{T_D(P(\zb))}(\nabla P(\zb)\bar u)}(\omega).
  \end{equation}
  In this case (\ref{EqKKTnew}) would be the necessary optimality condition which is sharper than condition (\ref{EqKKT}). In this paper we aim at finding some sufficient conditions, i.e., the 2-nondegeneracy condition introduced in Subsection \ref{SubSecTwoNondegen} below,  under which the tangent cone $T_{T_D(P(\zb))}(\nabla P(\zb)\bar u)$ is polyhedral and hence the above  optimality condition holds for (MPEC).
  However, in general  $T_{T_D(P(\zb))}(\nabla P(\zb)\bar u)$ is  not polyhedral,
  % near  $\nabla P(\zb)\bar u$,
   and the process {could} continue. The interested reader is referred to \cite{Gfr18} for  the discussion for what might have happened after applying Theorem \ref{ThLinM_StatBasic} repeatedly.
\end{remark}
\section{Constraint qualifications for the new optimality conditions}

 Note that (MPEC) can be written in the form \eqref{EqGenOptProbl} via
\begin{eqnarray}\label{EqMPECAlt}\min_{x,y}&&F(x,y)\\
\nonumber\mbox{subject to}&&P(x,y):=\left(\begin{array}{c}(y,-\phi(x,y))\\G(x,y)\end{array}\right)\in D:=\Gr \widehat N_\Gamma\times \R^p_-,
\end{eqnarray}
where $\Gamma:= \{y\mv g(y)\leq 0\}$.
To apply Theorem \ref{ThLinM_StatBasic},  we will need the following assumptions at a local solution $(\xb,\yb)$ to problem  \eqref{EqMPECAlt}.
%assume that we are given a local solution $(\xb,\yb)$ to \eqref{EqMPECAlt} such that the following assumptions are fulfilled:
\begin{assumption}\label{Ass1}
\begin{enumerate}\item[(i)] MSCQ holds for the lower level constraint $g(y)\in\R^q_-$ at $\yb$.
\item[(ii)] GGCQ holds at $(\xb,\yb)$ and the mapping
\begin{eqnarray*}(u,v)&\rightrightarrows& \nabla P(\xb,\yb)(u,v)-T_D((\yb,\yba),G(\xb,\yb))\\&=&\left(\begin{array}{c}(v,-\nabla \phi(\xb,\yb))(u,v)\\\nabla G(\xb,\yb)(u,v)\end{array}\right)-T_{\Gr \widehat N_\Gamma\times \R^p_-}((\yb,\yba),G(\xb,\yb))\end{eqnarray*}
is metrically subregular at $((0,0),0)$, where $\yba:=-\phi(\xb,\yb)$.
\end{enumerate}
\end{assumption}
It is well-known that  if  either $g$ is affine or the Mangasarian-Fromovitz constraint qualification (MFCQ) holds at
$\zb$ then  Assumption \ref{Ass1}(i) holds.
By Remark \ref{RemSubReg},  Assumption \ref{Ass1}(ii)  is fulfilled if MSCQ holds for the system $P(x,y)\in D$ at $(\xb,\yb)$. A point-based sufficient condition for the validity of MSCQ for this system is given by \cite[Theorem 5]{GfrYe16a}.
 We now describe this condition. {When MSCQ holds at $\bar y$ for the system $g(y)\in\R^q_-$, we have $T_\Gamma(\bar y)=\{v\mv \nabla g(\yb)v\in T_{\R^q_-}(g(\yb))\}$ and thus the critical cone $\K_\Gamma(\yb,\yba)$ amounts to
\begin{equation}\label{criticalcone-lowerlevel}
\KbG:=\K_\Gamma(\yb, \yba)=\{v\mv \nabla g(\yb)v\in T_{\R^q_-}(g(\yb))\}\cap[\yba]^\perp,\end{equation}
which is   convex polyhedral.}
  Further we define the {\em multiplier set} for $\bar y$ as the polyhedral convex {set} %cone
 defined by
\begin{equation}\label{multiplierset} \Lb:=\Lambda(\bar y,\bar y^*):=\{\lambda\in N_{\R^q_-}(g(\yb))\mv \nabla g(\yb)^T\lambda=\yba\}.\end{equation}
%Indeed, the multiplier set is the set of all Lagrange multipliers for the lower level system (\ref{VI}) when $(x,y)=(\bar x,\bar y)$.
{For a multiplier $\lambda$, the corresponding collection of {\em strict complementarity indexes} is denoted by}
\begin{eqnarray*}
I^+(\lambda):=\big\{i\in \{1,\ldots,q\}\big|\;\lambda_i>0\big\}\;\mbox{ for }\;\lambda=(\lambda_1,\ldots,\lambda_q)\in\R^q_+.
\end{eqnarray*}
Denote by $\E(\yb,\bar y^*)$ the collection of all the extreme points of the closed and convex set of multipliers $\Lambda(\bar y,\bar y^*)$ and recall that $\lambda \in \Lambda(\bar y,\bar y^*)$ belongs to $\E(\yb,\bar y^*)$ if and only the family of gradients $ \{ \nabla g_i(\bar y)| i\in I^+(\lambda)\}$ is linearly independent.
Moreover  for every $v\in {\cal K}_\Gamma(\bar y, \bar y^*)$, we define  the {\em directional multiplier set} as
\begin{equation*}\Lbv:=\Lambda(\bar y,\bar y^*;v):=\argmax \left\{v^T\nabla^2(\lambda^Tg)(\yb)v\mv \lambda\in \Lb \right \}, \end{equation*}
which is also a polyhedral convex {set.} %cone.
By \cite[Proposition 4.3(iii)]{GfrMo15a} we have $\Lbv\not=\emptyset$ $\forall v\in\KbG$ under Assumption \ref{Ass1}.
Note that since $\bar{\Lambda}$ is a closed convex set and the objective of the above problem is linear, by the optimality condition,
\begin{equation}\label{EqChar_lambda}
\lambda\in\Lbv\Longleftrightarrow v^T\nabla^2g(\yb) v\in N_{\Lb}(\lambda).
\end{equation}
\begin{theorem}[{cf. \cite[Theorem 4]{GfrYe16a}}]\label{ThTanConeGrNormalCone}Let $\bar y\in \Gamma:= \{y\mv g(y)\leq 0\} $ and $\bar y^*=-\phi(\bar x,\bar y)$.  Assume that MSCQ holds  at $\yb$ for
% the lower level constraints
the system $g(y)\in\R^q_-$. Then
 the tangent cone to the graph of $\widehat N_\Gamma$ at $(\yb,\yba)$ can be calculated by
\begin{eqnarray}\label{EqTanConeGrNormalCone}
\lefteqn{T_{\Gr \widehat N_\Gamma}(\yb,\yba)}\\
\nonumber
&=&\big\{(v,v^\ast)\in\R^{2m}\mv\exists\,\lambda\in\Lbv\;\mbox{ with }\;
v^\ast\in\nabla^2(\lambda^Tg)(\yb)v+N_{\KbG}(v)\big\}\\
\nonumber&=&\big\{(v,v^\ast)\in\R^{2m}\mv\exists\,\lambda\in\Lbv\cap \kappa\norm{\yba} \B_{\R^q}\;\mbox{ with }\;
v^\ast\in\nabla^2(\lambda^Tg)(\yb)v+N_{\KbG}(v)\big\},
\end{eqnarray}
where $\kappa>0$ is certain constant.
Or equivalently \begin{eqnarray}\label{EqTanConeGrNormalConeAlt}
\lefteqn{T_{\Gr \widehat N_\Gamma}(\yb,\yba)}\\
\nonumber
&=&\big\{(v,\nabla^2(\lambda^Tg)(y)v+z^\ast)\in\R^{2m}\mv  (z^\ast,v^T\nabla^2g(\yb)v)\in N_{\KbG\times\Lb}(v,\lambda)\big\}\\
\nonumber&=&\big\{(v,\nabla^2(\lambda^Tg)(y)v+z^\ast)\in\R^{2m}\mv  (z^\ast,v^T\nabla^2g(\yb)v)\in N_{\KbG\times\Lb}(v,\lambda),\ \norm{\lambda}\leq \kappa\norm{\yba}\big\}.
\end{eqnarray}
\end{theorem}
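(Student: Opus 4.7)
The plan is to prove the main formula \eqref{EqTanConeGrNormalCone} by double inclusion and then deduce the equivalent form \eqref{EqTanConeGrNormalConeAlt} by a direct substitution based on \eqref{EqChar_lambda}. MSCQ at $\yb$ enters in two ways: it gives the locally valid multiplier representation $\widehat N_\Gamma(y)=\nabla g(y)^T\Lambda(y,y^*)$ for $y$ near $\yb$, and, via the Hoffman error bound for polyhedral multiplier systems, it lets us select multipliers with a uniform linear norm bound $\norm{\mu}\leq \kappa\norm{y^*}$. This second fact is what produces the refinement $\norm{\lambda}\leq\kappa\norm{\yba}$ in the second line of \eqref{EqTanConeGrNormalCone}, and also guarantees $\Lbv\neq\emptyset$ as invoked from \cite[Proposition 4.3(iii)]{GfrMo15a}.

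For the inclusion ``$\supseteq$'', fix $\lambda\in\Lbv$ and $\eta\in N_{\KbG}(v)$ and set $v^*:=\nabla^2(\lambda^Tg)(\yb)v+\eta$. Exploiting the optimality characterization $v^T\nabla^2 g(\yb)v\in N_{\Lb}(\lambda)$, I would construct an admissible primal--dual curve $(y(t),\lambda(t))=(\yb+tv+\tfrac{t^2}{2}w+o(t^2),\lambda+t\delta+o(t))$ with $w$ and $\delta$ chosen so as to preserve the sign constraint $\lambda(t)\ge 0$, the active-index complementarity $\lambda(t)_i g_i(y(t))=0$, and the identity $\nabla g(y(t))^T\lambda(t)=\yba+tv^*+o(t)$. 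MSCQ then allows a final $o(t)$ correction of $y(t)$ so that $\yba+tv^*+o(t)\in\widehat N_\Gamma(y(t))$ exactly, giving a tangent sequence.

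The harder direction ``$\subseteq$'' is the main obstacle. Pick $(v,v^*)\in T_{\Gr\widehat N_\Gamma}(\yb,\yba)$ together with sequences $t_k\downarrow 0$, $(v_k,v_k^*)\to (v,v^*)$ satisfying $\yba+t_kv_k^*\in\widehat N_\Gamma(\yb+t_kv_k)$. MSCQ is used to select multipliers $\mu_k\in\Lambda(\yb+t_kv_k,\yba+t_kv_k^*)$ with $\norm{\mu_k}\leq\kappa\norm{\yba+t_kv_k^*}$; a cluster point $\bar\lambda$ then satisfies $\bar\lambda\in\Lb$ with $\norm{\bar\lambda}\leq\kappa\norm{\yba}$. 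The first-order Taylor expansion of $\yba+t_kv_k^*=\nabla g(\yb+t_kv_k)^T\mu_k$, combined with $\yba=\nabla g(\yb)^T\bar\lambda$, gives after division by $t_k$
\begin{equation*}
v_k^*=\tfrac{1}{t_k}\nabla g(\yb)^T(\mu_k-\bar\lambda)+\bigl(\nabla^2 g(\yb)v_k\bigr)^T\mu_k+o(1),
\end{equation*}
while $g(\yb+t_kv_k)\leq 0$ and the complementarity $\mu_k^T g(\yb+t_kv_k)=0$ force $\nabla g(\yb)v\in T_{\R^q_-}(g(\yb))$ and $v\in\KbG$. The two delicate points are: (a) showing that the rescaled differences $\nabla g(\yb)^T(\mu_k-\bar\lambda)/t_k$ cluster to some $z^*\in N_{\KbG}(v)$ (which uses that $\mu_k\in N_{\R^q_-}(g(\yb+t_kv_k))$ and a polyhedral face analysis via Proposition \ref{PropBasicPropCone}), and (b) identifying $\bar\lambda\in\Lbv$. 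Part (b) is handled by testing $\mu_k^Tg(\yb+t_kv_k)=0$ and $(\lambda')^Tg(\yb+t_kv_k)\leq 0$ for arbitrary $\lambda'\in\Lb$ against their respective second-order expansions at $\yb$, then subtracting to extract $v^T\nabla^2(\bar\lambda^Tg)(\yb)v\geq v^T\nabla^2((\lambda')^Tg)(\yb)v$, i.e.\ $\bar\lambda\in\Lbv$. Combining (a) and (b) gives the decomposition $v^*=\nabla^2(\bar\lambda^Tg)(\yb)v+z^*$ with $z^*\in N_{\KbG}(v)$ and $\bar\lambda\in\Lbv$, $\norm{\bar\lambda}\leq\kappa\norm{\yba}$, as claimed.

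The equivalence of \eqref{EqTanConeGrNormalCone} with \eqref{EqTanConeGrNormalConeAlt} is immediate from \eqref{EqChar_lambda}: the membership $\lambda\in\Lbv$ is equivalent to $v^T\nabla^2 g(\yb)v\in N_{\Lb}(\lambda)$, so writing $v^*=\nabla^2(\lambda^Tg)(\yb)v+z^*$ with $z^*\in N_{\KbG}(v)$ is the same as $(z^*,v^T\nabla^2 g(\yb)v)\in N_{\KbG\times\Lb}(v,\lambda)$, and the uniform bound $\norm{\lambda}\leq\kappa\norm{\yba}$ carries over verbatim.
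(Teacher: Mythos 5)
First, note that the paper does not actually prove this theorem: it is imported verbatim from \cite[Theorem 4]{GfrYe16a}, and the only argument the text supplies is the remark that \eqref{EqTanConeGrNormalCone} and \eqref{EqTanConeGrNormalConeAlt} are equivalent via \eqref{EqChar_lambda} --- your closing paragraph reproduces that part correctly. Your reconstruction of the quoted result follows the standard route, and the ``$\subseteq$'' direction is essentially sound: MSCQ yields a locally uniform bounded multiplier selection $\mu_k\in\Lambda(\yb+t_kv_k,\yba+t_kv_k^*)$ with $\norm{\mu_k}\le\kappa\norm{\yba+t_kv_k^*}$, and the expansion plus a polyhedral face analysis gives the decomposition. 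One caveat: your steps (a) and (b) are interdependent in a way the sketch hides. To place the cluster point of $\nabla g(\yb)^T(\mu_k-\bar\lambda)/t_k$ in $N_{\KbG}(v)$ you need $\bar J^+(\bar\lambda)\subseteq\bar J^+(\Lbv)$, i.e.\ you need $\bar\lambda\in\Lbv$ from (b) first; conversely, in (b) the cross term $(\nabla g(\yb)^T(\mu_k-\lambda'))^Tv_k$ is of order $t_k$ and therefore contributes at the same order $t_k^2$ as the Hessian terms, and you must argue separately that it vanishes in the limit (it does, because the limit of $\nabla g(\yb)^T(\mu_k-\bar\lambda)/t_k$ is a combination of gradients $\nabla g_i(\yb)$ with $i\in\bar I(v)$, hence orthogonal to $v$). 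The order of these arguments must be made explicit.

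The genuine gap is the ``$\supseteq$'' direction, which you only assert (``I would construct an admissible primal--dual curve''). Two nontrivial obstructions are hidden there. First, $\yb+tv$ itself need not lie in $\Gamma$, since $g_i(\yb+tv)=\tfrac{t^2}{2}v^T\nabla^2 g_i(\yb)v+o(t^2)$ for $i\in\bar I(v)$ may be positive; you need a second-order correction $w$ with $\nabla g_i(\yb)^Tw+v^T\nabla^2 g_i(\yb)v\le 0$ for all $i\in\bar I(v)$, and the existence of such a $w$ is not free --- by LP duality it is equivalent to the boundedness of $\sup\{v^T\nabla^2(\lambda^Tg)(\yb)v\mv\lambda\in\Lb\}$, i.e.\ precisely to $\Lbv\not=\emptyset$, a link your sketch never makes. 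Second, and more seriously, the multiplier increment $\delta$ must satisfy $\nabla g(\yb)^T\delta=\eta$ for the prescribed $\eta\in N_{\KbG}(v)$ while keeping $\lambda+t\delta\ge 0$; the representation of $N_{\KbG}(v)$ only guarantees a choice of $\delta$ with $\delta_i\ge 0$ for $i\in\bar I(v)\setminus\bar J^+(\Lbv)$, so at indices $i\in\bar J^+(\Lbv)$ with $\lambda_i=0$ you may be forced to take $\delta_i<0$, destroying nonnegativity of $\lambda(t)$. Repairing this needs an additional step (for instance, proving the inclusion first for $\lambda\in\ri\Lbv$ and then passing to limits using closedness of the tangent cone), and without it the constructive direction does not go through as written.
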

\noindent Note that the equivalence of (\ref{EqTanConeGrNormalCone}) and (\ref{EqTanConeGrNormalConeAlt}) is s due to   \eqref{EqChar_lambda}.

We now in a position to review a  sufficent condition for  Assumption \ref{Ass1} to hold.
\begin{theorem}[{ \cite[Theorem 5]{GfrYe16a}}]\label{ThSuffCondMS_FOGE}
  Let $(\xb,\yb)$ be  a feasible solution of  the system $P(x,y)\in D$. Assume that  MSCQ holds both  for the lower level problem constraints $g(y)\leq 0$ at $\yb$ and for the upper  level constraints $G(x,y)\leq0$ at
  $(\xb,\yb)$. Further assume that
  \begin{equation*}
  \nabla_x G(\xb,\yb)^T\eta=0,\ \eta\in N_{\R^p_-}(G(\xb,\yb))\quad \Longrightarrow\quad \nabla_y G(\xb,\yb)^T\eta=0,
  \end{equation*}
  and assume that
  there do not exist $(u,v)\not=0$, $\lambda\in\Lambda(\yb,-\phi(\xb,\yb);v)\cap \E(\yb,-\phi(\xb,\yb))$, $\eta\in\R^p_+$ and $w\not=0$ satisfying
  \begin{eqnarray*}
    &&\nabla G(\xb,\yb)%u
   (u,v)\in T_{\R^p_-}(G(\xb,\yb)),\; \\
    &&(v,-\nabla_{x}\phi(\xb,\yb)u-\nabla_{y}\phi(\xb,\yb)v)\in T_{\Gr \widehat N_\Gamma}(\yb,-\phi(\xb,\yb)),\\
    &&-\nabla_{x}\phi(\xb,\yb)^Tw+\nabla_{x} G(\xb,\yb)^T\eta=0,\;\eta\in N_{\R^p_-}(G(\xb,\yb)),\; \eta^T\nabla G(\xb,\yb)(u,v)=0,\\
    &&\nabla g_i(\yb)w=0, i\in I^+(\lambda),\; w^T\left (\nabla_{y}\phi(\xb,\yb)+\nabla^2(\lambda^Tg(\yb)\right )w-\eta^T\nabla_y G(\xb,\yb)w\leq 0.\qquad
  \end{eqnarray*}
%  where the tangent cone $ T_{\Gr \widehat N_\Gamma}(\yb,-\phi(\xb,\yb))$ can be calculated as in Theorem \ref{ThTanConeGrNormalCone}.
Then MSCQ for the system $P(x,y)\in D$ holds at $(\xb,\yb)$.
\end{theorem}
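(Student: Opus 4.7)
The plan is to apply a directional first-order sufficient condition for metric subregularity (FOSCMS) to the mapping $M(x,y):=P(x,y)-D$ at $((\xb,\yb),0)$. Concretely, the criterion requires that for every nonzero direction $(u,v)\in\R^n\times\R^m$ with $\nabla P(\xb,\yb)(u,v)\in T_D(P(\xb,\yb))$, every triple $(z^\ast,w,\eta)$ lying in the directional limiting normal cone $N_D(P(\xb,\yb);\nabla P(\xb,\yb)(u,v))$ and satisfying the adjoint equation $\nabla P(\xb,\yb)^T(z^\ast,w,\eta)=0$ must be zero; if this holds, then $M$ is metrically subregular at $((\xb,\yb),0)$, which is exactly MSCQ for $P(x,y)\in D$. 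The goal is thus to identify the forbidden system in the hypothesis with the list of possible nonzero ``bad'' multipliers arising in FOSCMS.

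Because $D=\Gr\widehat N_\Gamma\times\R^p_-$ is a product, both the directional tangent cone and the directional limiting normal cone at $P(\xb,\yb)$ split. The primal condition $\nabla P(\xb,\yb)(u,v)\in T_D(P(\xb,\yb))$ unpacks into $\nabla G(\xb,\yb)(u,v)\in T_{\R^p_-}(G(\xb,\yb))$ together with $(v,-\nabla_x\phi(\xb,\yb)u-\nabla_y\phi(\xb,\yb)v)\in T_{\Gr\widehat N_\Gamma}(\yb,\yba)$, which are precisely the first two primal constraints in the forbidden system. The directional normal cone to $\R^p_-$ along $\nabla G(\xb,\yb)(u,v)$ produces $\eta\in N_{\R^p_-}(G(\xb,\yb))$ with the complementarity $\eta^T\nabla G(\xb,\yb)(u,v)=0$. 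Expanding the adjoint $\nabla P(\xb,\yb)^T(z^\ast,w,\eta)=0$ gives $z^\ast=\nabla_y\phi(\xb,\yb)^Tw-\nabla_y G(\xb,\yb)^T\eta$ together with $-\nabla_x\phi(\xb,\yb)^Tw+\nabla_x G(\xb,\yb)^T\eta=0$, the latter being the third line of the hypothesis.

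The main obstacle is a tractable description of the directional limiting normal cone to $\Gr\widehat N_\Gamma$ at $(\yb,\yba)$ in the direction $(v,-\nabla_x\phi u-\nabla_y\phi v)$. Combining the tangent-cone formula of Theorem~\ref{ThTanConeGrNormalCone}, the face structure of the polyhedral critical cone $\KbG$ encoded in Propositions~\ref{PropNormalConeMapping} and~\ref{PropBasicPropCone}, and a second-order expansion of the KKT system for $g(y)\leq0$, one shows that any pair $(z^\ast,w)$ in this directional normal cone must come from an \emph{extreme} directional multiplier $\lambda\in\Lbv\cap\E(\yb,\yba)$ with $\nabla g_i(\yb)w=0$ for $i\in I^+(\lambda)$, together with a second-order inequality of the type $w^Tz^\ast+w^T\nabla^2(\lambda^Tg)(\yb)w\leq 0$. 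Substituting the expression for $z^\ast$ already obtained from the adjoint transforms this inequality into exactly $w^T(\nabla_y\phi(\xb,\yb)+\nabla^2(\lambda^Tg)(\yb))w-\eta^T\nabla_y G(\xb,\yb)w\leq 0$, which is the last line of the forbidden system.

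Put together, the negation of FOSCMS delivers exactly the data $(u,v,\lambda,\eta,w)$ listed in the hypothesis with $(u,v)\neq 0$. The residual case $w=0$ must be discarded separately: the adjoint then reduces to $\nabla_x G(\xb,\yb)^T\eta=0$ and the hypothesis forces $\nabla_y G(\xb,\yb)^T\eta=0$, so $\eta$ would be a singular multiplier for the upper-level system $G\leq 0$ at $(\xb,\yb)$; MSCQ for $G$ at $(\xb,\yb)$ (together with MSCQ for $g$ at $\yb$ applied to the equilibrium component) then forces $\eta=0$, contradicting the nontriviality of the multiplier triple needed to violate FOSCMS. Consequently the nonexistence assumption of the theorem validates FOSCMS, and MSCQ for the system $P(x,y)\in D$ at $(\xb,\yb)$ follows.
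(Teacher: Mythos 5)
First, a point of reference: this paper does not prove the statement at all — it is imported verbatim from \cite[Theorem 5]{GfrYe16a} — so there is no in-paper argument to measure yours against. Your overall strategy (a directional sufficient condition for metric subregularity applied to $P(x,y)\in D$, the product splitting of $D=\Gr\widehat N_\Gamma\times\R^p_-$, and a description of the directional limiting normal cone to $\Gr\widehat N_\Gamma$ via extreme directional multipliers) is in the right family of techniques and matches the spirit of the cited source.

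There is, however, a genuine gap in your treatment of the residual case $w=0$. The forbidden system in the hypothesis only excludes quintuples with $w\not=0$, so a plain FOSCMS verification leaves open a nonzero annihilating multiplier of the form $(z^\ast,0,\eta)$ with $\eta\not=0$. You dispose of this by claiming that MSCQ for $G(x,y)\le0$ at $(\xb,\yb)$ ``forces $\eta=0$'' once $\nabla_xG(\xb,\yb)^T\eta=\nabla_yG(\xb,\yb)^T\eta=0$. That is false: the absence of nonzero singular multipliers is the Mordukhovich criterion \eqref{EqMoCrit}, i.e.\ metric \emph{regularity} (MFCQ for an inequality system), which is strictly stronger than the metric subregularity assumed here. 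For instance, $G(x,y):=(x,-x)$ is metrically subregular at the origin (its feasible set is $\{x=0\}$ and the residual equals the distance) yet admits the nonzero singular multiplier $\eta=(1,1)$. So the pointwise multiplier criterion genuinely fails in this case; the upper-level block cannot be absorbed into FOSCMS and must be handled by a separate argument in which MSCQ for $G\le0$ and the implication $\nabla_xG^T\eta=0\Rightarrow\nabla_yG^T\eta=0$ are used to combine the subregularity of the two constraint blocks, not to exclude singular multipliers. A second, smaller issue is that the technical core of your second paragraph — that every $(z^\ast,w)$ in the directional limiting normal cone of $\Gr\widehat N_\Gamma$ is generated by some $\lambda\in\Lambda(\yb,\yba;v)\cap\E(\yb,\yba)$ with $\nabla g_i(\yb)^Tw=0$ for $i\in I^+(\lambda)$ and satisfies the quadratic inequality — is asserted rather than proved; that computation is the bulk of the work in \cite{GfrOut16a} and \cite{GfrYe16a} and does not follow from the tangent cone formula of Theorem \ref{ThTanConeGrNormalCone} alone.
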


\section{Computing regular normals to tangent directions and tangents of tangents}

In this section, we apply Theorem \ref{ThLinM_StatBasic} to obtain a necessary optimality condition for program \eqref{EqMPECAlt} {which is equivalent to the MPEC \eqref{EqMPEC}.
 In order to  apply Theorem \ref{ThLinM_StatBasic}, for any $(\vb,\vba,a):=w \in T_D(P(\xb,\yb))$, we need to compute
$\widehat N_{T_{D}(P(\xb,\yb))}(w)$.  Using \cite[Proposition 1]{GfrYe16a} together with \cite[Proposition 6.41]{RoWe98}, we obtain
$$\widehat N_{T_{D}(P(\xb,\yb))}(w) =\widehat N_{T_{\Gr \widehat N_\Gamma}(\yb,\yba)}(\vb,\vba)\times \widehat N_{T_{\R^p_-}(G(\xb,\yb))}(a).$$
Hence the aim of this section is to   compute  the regular tangent cone to the tangent directions $\widehat N_{T_{\Gr \widehat N_\Gamma}(\yb,\yba)}(\vb,\vba)$. Similarly by virtue of (\ref{EqKKTnew}) we also need to compute  the regular tangent cone to the tangents of tangents  $\widehat N_{T_{T_{\Gr \widehat N_\Gamma}(\yb,\yba)}(\vb,\vba)}(\delta\vb,\delta\vba)$.

As discussed in the introduction, under a certain  constraint qualification such as  CRCQ, formulas for $\widehat N_{T_{D}(P(\xb,\yb))}(w)$ and the resulting optimality condition for \eqref{EqMPEC}  are derived in \cite[Proposition 3, Theorem 5]{Gfr18}. In this paper we use a different approach.
Given $(\vb,\vba )\in T_{\Gr \widehat N_\Gamma}(\yb,\yba)$, by the formula for the tangent cone  \eqref{EqTanConeGrNormalCone},  there is some $\lambda \in \Lb(\vb)$ and  $z^*\in N_{\KbG}(\vb)$
such that
$\vba \in \nabla^2(\lambda^Tg)(\yb)\vb+z^*.$ Suppose that such representation is unique, i.e., there is a unique $\bar \lambda\in \Lb(\vb)$ and $\bar z^*\in N_{\KbG}(\vb)$ such that $\vba =\nabla^2(\bar \lambda^Tg)(\yb)\vb+\bar z^*.$ Then
$$(\vb,\vba)\in T_{\Gr \widehat N_\Gamma}(\yb,\yba) \Longleftrightarrow \vba =\nabla^2(\bar \lambda^Tg)(\yb)\vb+\bar z^*.$$
The uniqueness allows the efficient calculation of the regular normal cone to tangent directions. To guarantee this uniqueness we perform our analysis under the assumption of 2-nondegeneracy on $g$ as introduced in the next subsection.

\subsection{2-nondegeneracy\label{SubSecTwoNondegen}}

%We now introduce  a sufficient condition for the uniqueness of this representation.

\begin{definition}\label{two-nond} Let $\vb\in \KbG$. We say that $g$ is {\em 2-nondegenerate in direction $\vb$ at $(\yb,\yba)$} if
\[\nabla^2(\mu^Tg)(\yb)\vb \in (N_\KbG(\vb))^+,\ \mu\in \big(\Lb(\vb)\big)^+ \ \Longrightarrow\ \mu=0.\]
\end{definition}
\noindent In the case where the {directional} multiplier set $\Lb(\vb)$ is a singleton, $\big(\Lb(\vb)\big)^+=\{0\}$ and hence $g$ is  2-nondegenerate in this direction $\vb$.
In particular, if $\Lb$ is a singleton then $g$ is 2-nondegenerate in any direction $\vb$.
We now provide a formulation of 2-nondegeneracy in terms of index sets. To this end let us define
\begin{gather*}
  \bar I:=\{i\in\{1,\ldots,q\}\mv g_i(\yb)=0\},\quad \bar I(v):=\{i\in \bar I\mv \nabla g_i(\yb)^T v=0\}, v\in\KbG,\\
\bar J^+(\lambda):=\{i\in \bar I\mv \lambda_i>0\},\lambda\in\Lb,\quad \bar J^+(\Xi):=\bigcup_{\lambda\in\Xi}\bar J^+(\lambda) {\mbox{ for any }}  \Xi\subseteq \Lb.
\end{gather*}
By the definition of the critical cone in (\ref{criticalcone-lowerlevel}), we have
$$\KbG=\Big\{v\mv \nabla g_i(\yb)^T v\leq 0,\ i\in\bar I\Big\}\cap[\yba]^\perp .$$
Since by the definition of the multiplier set (\ref{multiplierset}),
$$\lambda\in\Lb\Longleftrightarrow \lambda\in N_{\R^q_-}(g(\yb)),  \quad \yba=\nabla g(\yb)^T\lambda$$
we have
$$v\in [\yba]^\perp \Longleftrightarrow 0=\yba^Tv=\langle\nabla g(\yb)^T\lambda, v\rangle= \lambda^T  \nabla g(\yb)v. $$
Hence it is obvious that for every $\lambda\in\Lb$ we have
\[\KbG=\Big\{v\mv \nabla g_i(\yb)^T v\begin{cases}=0&i\in \bar J^+(\lambda)\\
\leq 0&i\in\bar I\setminus \bar J^+(\lambda)\end{cases}\Big\}\]
yielding
\begin{equation*}\KbG=\Big\{v\mv \nabla g_i(\yb)^T v\begin{cases}=0&i\in \bar J^+(\Lb)\\
\leq 0&i\in\bar I\setminus \bar J^+(\Lb)\end{cases}\Big\}=\Big\{v\mv \nabla g_i(\yb)^T v\begin{cases}=0&i\in \bar J^+(\Lb(\vb))\\
\leq 0&i\in\bar I\setminus \bar J^+(\Lb(\vb))\end{cases}\Big\}.\end{equation*}
Thus
\[N_\KbG(\vb)=\{\sum_{i\in \bar I(\vb)}\eta_i\nabla g_i(\yb)\mv \eta_i\geq 0,i\in\bar I(\vb)\setminus \bar J^+(\Lb(\vb))\}\]
 Now choose $\hat J$ with $\bar J^+(\Lb(\vb))\subseteq \hat J\subseteq \bar I(\vb)$ large enough such that for every $j\in \bar I(\vb)\setminus \hat J$ the gradient $\nabla g_j(\yb)$ linearly depend on $\nabla g_i(\yb)$, $i\in\hat J$.
 It follows that
\[\big(N_\KbG(\vb)\big)^+=\{\sum_{i\in \bar I(\vb)}\eta_i\nabla g_i(\yb)\mv \eta_i\in\R,i\in\bar I(\vb)\}=\{\sum_{i\in \hat J}\eta_i\nabla g_i(\yb)\mv \eta_i\in\R,i\in\hat J\}.\]
Next we claim that
\begin{gather*}
\big(\Lb(\vb)\big)^+=L:=\{\mu\in\R^q\mv \nabla g(\yb)^T\mu=0,\ \vb^T\nabla ^2(\mu^Tg)(\yb)\vb=0, \mu_i=0,\ i\not\in \bar J^+(\Lb(\vb))\}.
 \end{gather*}
Indeed, for every pair $\lambda^1,\lambda^2\in \Lb(\vb)$, we have $\yba=  \nabla g(\yb)^T\lambda^1= \nabla g(\yb)^T\lambda^2$ and $$\vb ^T\nabla^2(\lambda_1^T g)(\yb) \vb=\vb ^T\nabla^2(\lambda_2^T g)(\yb) \vb, $$ which implies that
\[\nabla g(\yb)^T(\lambda^1-\lambda^2)=0,\ \vb^T\nabla ^2((\lambda^1-\lambda^2)^Tg)(\yb)\vb=0, \lambda^1_i-\lambda^2_i=0,\ i\not\in \bar J^+(\Lb(\vb))\]
showing $\big(\Lb(\vb)\big)^+\subseteq L$. To show the reverse inclusion, take any $\mu\in L$ and any $\lb\in \Lb(\vb)$.
% with $\lb_i>0$, $i\in \bar J^+(\Lb(\vb))$. Given
Then $\lb+\alpha\mu\geq0$ for all $\alpha>0$ sufficiently small. It is easy to see  that $\lb+\alpha\mu\in\Lb(\bar v)$ implying $L\subseteq \big(\Lb(\vb)\big)^+$. Thus our claim holds true and
we obtain that $g$ is {\em 2-nondegenerate in direction $\vb$ at $(\yb,\yba)$} if and only if
\begin{equation}\label{Eq2NondegenIndex}\sum_{i\in\hat J}\eta_i\nabla g_i(\yb)+\sum_{i\in\bar J^+(\Lb(\vb))}\mu_i\nabla ^2g_i(\yb)\vb=0, \ \sum_{i\in\bar J^+(\Lb(\vb))}\mu_i\nabla g_i(\yb)=0\ \Rightarrow \mu_i=0,\ i\in\bar J^+(\Lb(\vb)).
\end{equation}
We now want to compare 2-nondegeneracy with the  notion of {\em 2-regularity} which was initiated (and named) by Tret'yakov \cite{Tr84} in the case of zero Jacobian and then was strongly developed by Avakov \cite{Av85}. A twice continuously differentiable mapping $h:\R^m\to \R^l$ is called 2-regular at a point $\yb\in\R^m$ in direction $v\in\R^m$ if for all $\alpha\in\R^l$ the system
\[\nabla h(\yb)u+v^T\nabla^2h(\yb)w=\alpha,\ \nabla h(\yb)w=0\]
has a solution $(u,w)$.
We claim that 2-regularity of $(g_i)_{i\in \hat J}$ implies 2-nondegeneracy of $g$ in direction $\vb$. Indeed, by the Farkas lemma 2-regularity of $(g_i)_{i\in\hat J}$ in direction $\vb$ is equivalent to the statement
\begin{equation*}\sum_{i\in\hat J}(\eta_i\nabla g_i(\yb)+\mu_i\nabla ^2g_i(\yb)\vb)=0, \ \sum_{i\in\hat J}\mu_i\nabla g_i(\yb)=0\ \Rightarrow \mu_i=0,\ i\in\hat J.\end{equation*}
and it is easy to see that this condition implies \eqref{Eq2NondegenIndex}.

The following lemma states some important consequences of 2-nondgeneracy.
\begin{lemma}\label{LemUnique_lambda}Assume that $g$ is 2-nondegenerate in the critical direction $\vb\in \KbG$ at $(\yb,\yba)$ and define the subspace
\begin{equation*}
  {\cal H}(\vb):=\{\nabla^2(\mu^Tg)(\yb)\vb\mv \mu\in \big(\Lb(\vb)\big)^+\}+ (N_\KbG(\vb))^+.
\end{equation*}
Then the linear mapping $\A_{\vb}:\big(\Lb(\vb)\big)^+\times(N_\KbG(\vb))^+\to {\cal H}(\vb)$ given by
\[\A_{\vb}(\mu,z^\ast):=\nabla^2(\mu^Tg)(\yb)\vb+z^\ast\]
is a bijection. In particular, for every $\vba$ with $(\vb,\vba)\in T_{\Gr \widehat N_\Gamma}(\yb,\yba)$ there are unique elements ${\bar{\lambda}} \in \Lb(\vb)$ and $\zba\in N_{\KbG}(\vb)$
  \begin{equation}\label{EqReprVast}\vba=\nabla^2(\lb^Tg)(\yb)\vb+\zba.\end{equation}
\end{lemma}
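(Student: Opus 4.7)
The plan is to verify the two claims separately, with the bijectivity being the core assertion from which uniqueness of the representation \eqref{EqReprVast} follows formally.

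For bijectivity of $\A_{\vb}$: surjectivity onto $\mathcal H(\vb)$ is immediate since $\mathcal H(\vb)$ is defined precisely as the sum of the ranges of the two components of $\A_{\vb}$. The heart of the matter is injectivity, and this is exactly where the 2-nondegeneracy hypothesis is designed to be used. Suppose $\A_{\vb}(\mu,z^\ast)=0$ for some $\mu\in(\Lb(\vb))^+$ and $z^\ast\in(N_{\KbG}(\vb))^+$. Then $\nabla^2(\mu^Tg)(\yb)\vb=-z^\ast$. Since $(N_{\KbG}(\vb))^+$ is a subspace (it equals $\Span(N_{\KbG}(\vb)-N_{\KbG}(\vb))$), we have $-z^\ast\in(N_{\KbG}(\vb))^+$, so $\nabla^2(\mu^Tg)(\yb)\vb\in(N_{\KbG}(\vb))^+$. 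The definition of 2-nondegeneracy in Definition \ref{two-nond} immediately forces $\mu=0$, and then $z^\ast=0$ follows.

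For the uniqueness in \eqref{EqReprVast}: existence of at least one pair $(\bar\lambda,\zba)\in\Lb(\vb)\times N_{\KbG}(\vb)$ with $\vba=\nabla^2(\bar\lambda^Tg)(\yb)\vb+\zba$ is exactly the content of formula \eqref{EqTanConeGrNormalCone} in Theorem \ref{ThTanConeGrNormalCone}, which applies since Assumption \ref{Ass1}(i) / MSCQ for $g(y)\leq0$ at $\yb$ is in force (this is tacitly required throughout the section and is needed even to define $\KbG$ and $\Lb$ as convex polyhedral sets). For uniqueness, suppose $\vba=\nabla^2(\lambda_i^Tg)(\yb)\vb+z_i^\ast$ for $i=1,2$ with $\lambda_i\in\Lb(\vb)$ and $z_i^\ast\in N_{\KbG}(\vb)$. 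Subtracting gives
\[
\nabla^2((\lambda_1-\lambda_2)^Tg)(\yb)\vb+(z_1^\ast-z_2^\ast)=0,
\]
i.e.\ $\A_{\vb}(\lambda_1-\lambda_2,\,z_1^\ast-z_2^\ast)=0$. Since $\lambda_1-\lambda_2\in\Lb(\vb)-\Lb(\vb)\subseteq (\Lb(\vb))^+$ and similarly $z_1^\ast-z_2^\ast\in(N_{\KbG}(\vb))^+$, the already established injectivity of $\A_{\vb}$ yields $\lambda_1=\lambda_2$ and $z_1^\ast=z_2^\ast$.

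I do not expect any real obstacle here; the only point requiring a little care is verifying that the arguments of $\A_{\vb}$ land in the correct subspaces, i.e.\ that differences of elements of $\Lb(\vb)$ and of $N_{\KbG}(\vb)$ sit inside the respective generalized lineality/affine-hull spaces $(\Lb(\vb))^+$ and $(N_{\KbG}(\vb))^+$. This is a direct consequence of the definition $C^+=\Span(C-C)$ recalled before \eqref{affinehull}, and in the multiplier case has already been used in the identification $(\Lb(\vb))^+=L$ established just above the lemma. Thus once injectivity via 2-nondegeneracy is in hand, the proof is essentially a one-line linear algebra argument.
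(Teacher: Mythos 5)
Your proposal is correct and follows essentially the same route as the paper's own proof: surjectivity by definition of ${\cal H}(\vb)$, injectivity by feeding $\nabla^2(\mu^Tg)(\yb)\vb=-z^\ast\in (N_\KbG(\vb))^+$ into the 2-nondegeneracy condition, existence of the representation from Theorem \ref{ThTanConeGrNormalCone}, and uniqueness by subtracting two representations and noting the differences lie in $\big(\Lb(\vb)\big)^+\times(N_\KbG(\vb))^+$. No gaps.
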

\begin{proof}
By the definition, the mapping $\A_{\vb}$ is surjective and therefore we only have to show injectivity. Consider elements $(\mu,\zba)\in \big(\Lb(\vb)\big)^+\times(N_\KbG(\vb))^+$ satisfying $\A_{\vb}(\mu,\zba)=0$. Then  $\nabla^2(\mu^Tg)(\yb)\vb =-\zba\in (N_\KbG(\vb))^+$ and by the assumed 2-nondegeneracy of $g$ in direction $\vb$ we obtain $\mu=0$ and consequently $\zba=0$. Thus $\A_{\vb}$ is injective.

In order to show the second statement consider  $\vba$ with $(\vb,\vba)\in T_{\Gr \widehat N_\Gamma}(\yb,\yba)$. The existence of $(\lb,\zba)\in\Lb(\vb)\times N_{\KbG}(\vb)$ fulfilling \eqref{EqReprVast} follows from Theorem \ref{ThTanConeGrNormalCone}. In order to prove  uniqueness of the representation \eqref{EqReprVast},
consider $(\lambda_1,z_1^\ast),(\lambda_2,z_2^\ast)\in \Lb(\vb)\times N_{\KbG}(\vb)$ such that \[\vba=\nabla^2({\lambda_j}^Tg)(\yb)\vb+z_j^\ast,\ j=1,2\]
implying
\[\A_{\vb}(\lambda_2-\lambda_1, z_2^\ast-z_1^\ast)=\nabla^2 ((\lambda_2-\lambda_1)^Tg)(\yb)\vb+z_2^\ast-z_1^\ast=0.\]
Then $\lambda_2-\lambda_1\in\big(\Lb(\vb)\big)^+$ and $z_2^\ast-z_1^\ast\in (N_{\KbG}(\vb))^+$ and by the injectivity of $\A_{\vb}$ we obtain $\lambda_2=\lambda_1$ and $z_2^\ast=z_1^\ast$.
\end{proof}

\subsection{Regular normals to tangent directions}

Throughout this subsection let $(\vb,\vba)\in T_{\Gr \widehat N_\Gamma}(\yb,\yba)$ be given. The main purpose of this section is to compute  the regular normal cone of the tangent directions $\widehat N_{T_{\Gr \widehat N_\Gamma}(\yb,\yba)}(\vb,\vba)$.

\begin{proposition}\label{PropTanConeTanConeGrNormalCone}Assume that $g$ is 2-nondegenerate in the critical direction $\vb\in \KbG$ at $(\yb,\yba)$. Then for every $\vba$ with $(\vb,\vba)\in T_{\Gr \widehat N_\Gamma}(\yb,\yba)$ we have
\begin{equation}\label{EqTanConeTanConeGrNormalCone}
  T_{T_{\Gr \widehat N_\Gamma}(\yb,\yba)}(\vb,\vba)=\left \{(u, u^*) \mv \exists \mu, \zeta^* \mbox{ s.t. } \begin{array}{l}
  u^*=\nabla^2(\lb^Tg)(\yb)u+\nabla^2(\mu^Tg)(\yb)\vb+\zeta^\ast,\\
 (u,\mu,\zeta^\ast, 2\vb^T\nabla^2g(\yb)u)\in \Gr N_{\tilde K(\vb,\vba)}
 \end{array} \right \},
\end{equation}
where $(\lb,\zba)\in \Lb(\vb)\times N_{\KbG}(\vb)$ is the unique element fulfilling $\vba=\nabla^2(\lb^Tg)(\yb)\vb+\zba$ %\eqref{EqReprVast}
 and
 \begin{equation*}
 \tilde K(\vb,\vba):=\K_{\KbG\times\Lb}(\vb,\lb,\zba,\vb^T\nabla^2g(\yb)\vb).\end{equation*}
Further,
\begin{eqnarray}
 \lefteqn{ \widehat N_{T_{\Gr \widehat N_\Gamma}(\yb,\yba)}(\vb,\vba)}\nonumber\\
 &=&\Big\{(w^\ast,w)\mv \begin{array}{l}
  \exists \eta \mbox{ s.t. } \big(w^\ast+\nabla^2(\lb^Tg)(\yb)w- 2\nabla^2(\eta^Tg)(\yb)\vb, \vb^T\nabla^2 g(\yb)w,w,\eta\big)\\
  \qquad \quad \qquad \quad \in \big(\tilde K(\vb,\vba)\big)^\circ\times \tilde K(\vb,\vba) \end{array}\Big\}.  \label{EqRegNormalConeTanConeGrNormalCone}
\end{eqnarray}

\end{proposition}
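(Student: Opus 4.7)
The plan is to reduce the computation, via the alternative formulation \eqref{EqTanConeGrNormalConeAlt} of Theorem~\ref{ThTanConeGrNormalCone}, to a polyhedral normal-cone-constrained set to which Lemma~\ref{LemAux} applies, and then to transport the resulting formulas through a smooth map. Concretely, set $C := \KbG \times \Lb$ (polyhedral convex), $\tilde P(v,\lambda,z^*) := \bigl((v,\lambda),(z^*, v^T\nabla^2 g(\yb)v)\bigr)$, $\Psi(v,\lambda,z^*) := (v, \nabla^2(\lambda^Tg)(\yb)v + z^*)$, and $\tilde\Omega := \{(v,\lambda,z^*) : \tilde P(v,\lambda,z^*) \in \Gr N_C\}$. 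Then \eqref{EqTanConeGrNormalConeAlt} reads $T_{\Gr \widehat N_\Gamma}(\yb,\yba) = \Psi(\tilde\Omega)$, and by Lemma~\ref{LemUnique_lambda} the point $(\vb,\lb,\zba)\in\tilde\Omega$ is the unique preimage of $(\vb,\vba)$ whose $\lambda$-component lies in $\Lb(\vb)$.

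The technical crux is to apply Lemma~\ref{LemAux} to the system $\tilde P(z)\in\Gr N_C$ at $(\vb,\lb,\zba)$ with subspaces $L_1 := (N_{\KbG}(\vb))^+ \times (N_{\Lb}(\lb))^+$ and $L_2 := \tilde K(\vb,\vba)^+$. A direct calculation of the adjoint of $\nabla\tilde P(\vb,\lb,\zba)$ gives
\[\ker\nabla\tilde P(\vb,\lb,\zba)^T = \{(-2\nabla^2(d^Tg)(\yb)\vb,\,0,\,0,\,d) : d \in \R^q\}.\]
Such a tuple lies in $L_1\times L_2$ iff $-2\nabla^2(d^Tg)(\yb)\vb\in (N_{\KbG}(\vb))^+$ and $(0,d)\in\tilde K(\vb,\vba)^+$. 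Using the factorization $\tilde K(\vb,\vba) = \K_{\KbG}(\vb,\zba)\times\K_{\Lb}(\lb,\vb^T\nabla^2 g(\yb)\vb)$ (a consequence of the sign structure of the two normals), the second requirement reduces to $d \in \K_{\Lb}(\lb,\vb^T\nabla^2 g(\yb)\vb)^+$. Since $\vb^T\nabla^2 g(\yb)\vb$ is precisely the $\lambda$-gradient of the linear objective $\lambda\mapsto \vb^T\nabla^2(\lambda^Tg)(\yb)\vb$ that $\lb$ maximizes on $\Lb$, the critical cone $\K_{\Lb}(\lb,\vb^T\nabla^2 g(\yb)\vb)$ is exactly the tangent cone $T_{\Lb(\vb)}(\lb)$ to the optimal face $\Lb(\vb)$ at $\lb$, and therefore by \eqref{affinehull} we have $\K_{\Lb}(\lb,\vb^T\nabla^2 g(\yb)\vb)^+ = (\Lb(\vb))^+$. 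The 2-nondegeneracy of $g$ in direction $\vb$ then forces $d=0$, hence also $a=0$, verifying \eqref{EqRelaxedMo}. Identifying the $\lambda$-projection of $\tilde K(\vb,\vba)^+$ with $(\Lb(\vb))^+$ is the main obstacle.

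Having verified \eqref{EqRelaxedMo}, Lemma~\ref{LemAux} delivers both metric regularity of $\tilde P(\cdot)-\Gr N_C$ at $(\vb,\lb,\zba)$ and, via \eqref{EqTanNormalConePolyhedral} and \eqref{EqNormalConePolyhedral}, the explicit expressions
\[T_{\tilde\Omega}(\vb,\lb,\zba)=\{(u,\mu,\zeta^*) : (u,\mu,\zeta^*, 2\vb^T\nabla^2 g(\yb)u)\in\Gr N_{\tilde K(\vb,\vba)}\},\]
\[\widehat N_{\tilde\Omega}(\vb,\lb,\zba)=\{(\omega_1+2\nabla^2(\eta^Tg)(\yb)\vb,\omega_2,w) : (\omega_1,\omega_2)\in\tilde K(\vb,\vba)^\circ,\ (w,\eta)\in\tilde K(\vb,\vba)\}.\]
The metric regularity combined with the uniqueness in Lemma~\ref{LemUnique_lambda} yields a Lipschitz lift of any sequence in $\Psi(\tilde\Omega)$ approaching $(\vb,\vba)$, and so the chain rule $T_{\Psi(\tilde\Omega)}(\vb,\vba) = \nabla\Psi(\vb,\lb,\zba)\, T_{\tilde\Omega}(\vb,\lb,\zba)$ holds; substituting $\nabla\Psi(\vb,\lb,\zba)(u,\mu,\zeta^*) = (u,\nabla^2(\lb^Tg)(\yb)u+\nabla^2(\mu^Tg)(\yb)\vb+\zeta^*)$ produces \eqref{EqTanConeTanConeGrNormalCone}. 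For the regular normal cone, Definition~\ref{DefVarGeometry} gives $\widehat N_\Omega = T_\Omega^\circ$, and the elementary identity $(AK)^\circ = (A^T)^{-1}(K^\circ)$ (valid for any linear $A$ and any set $K$) then yields
\[\widehat N_{T_{\Gr\widehat N_\Gamma}(\yb,\yba)}(\vb,\vba) = \bigl(\nabla\Psi(\vb,\lb,\zba)^T\bigr)^{-1}\bigl(\widehat N_{\tilde\Omega}(\vb,\lb,\zba)\bigr).\]
Computing $\nabla\Psi(\vb,\lb,\zba)^T(w^*,w) = \bigl(w^*+\nabla^2(\lb^Tg)(\yb)w,\,\vb^T\nabla^2 g(\yb)w,\,w\bigr)$ and matching against the explicit form of $\widehat N_{\tilde\Omega}(\vb,\lb,\zba)$ produces \eqref{EqRegNormalConeTanConeGrNormalCone}.
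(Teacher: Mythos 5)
Your overall architecture coincides with the paper's: both proofs parametrize $T_{\Gr \widehat N_\Gamma}(\yb,\yba)$ via \eqref{EqTanConeGrNormalConeAlt} as the image under $\Psi(v,\lambda,z^*)=(v,\nabla^2(\lambda^Tg)(\yb)v+z^*)$ of the set $\tilde\Omega=\{\tilde P\in\Gr N_{\KbG\times\Lb}\}$, verify \eqref{EqRelaxedMo} through 2-nondegeneracy (your smaller subspaces $L_1,L_2$ are the minimal admissible choices and the verification is the same computation), invoke Lemma \ref{LemAux} for $T_{\tilde\Omega}$ and $\widehat N_{\tilde\Omega}$, and finish by taking polars through $\nabla\Psi(\vb,\lb,\zba)^T$; your identity $(AK)^\circ=(A^T)^{-1}(K^\circ)$ and the adjoint computations are correct and reproduce \eqref{EqRegNormalConeTanConeGrNormalCone}.

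The one step that is genuinely under-justified is the hard inclusion $T_{\Psi(\tilde\Omega)}(\vb,\vba)\subseteq\nabla\Psi(\vb,\lb,\zba)\,T_{\tilde\Omega}(\vb,\lb,\zba)$, which you attribute to ``metric regularity combined with the uniqueness in Lemma \ref{LemUnique_lambda}.'' The metric regularity delivered by Lemma \ref{LemAux} concerns the mapping $z\rightrightarrows\tilde P(z)-\Gr N_C$, i.e.\ the distance from a point to $\tilde\Omega$ in terms of the residual of $\tilde P$; it says nothing about lifting a sequence $(\vb+t_ku_k,\vba+t_ku_k^*)\in\Psi(\tilde\Omega)$ to a sequence in $\tilde\Omega$ at distance $O(t_k)$ from $(\vb,\lb,\zba)$ — that would be a calmness property of the preimage map of $\Psi|_{\tilde\Omega}$, a different object. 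The lift actually requires three ingredients that your sketch omits: (a) the preimages can be chosen with $\norm{\lambda_k}\le\kappa\norm{\yba}$ (the truncated representation in Theorem \ref{ThTanConeGrNormalCone}), so that $\A_{\vb}(\lambda_k-\lb,z_k^*-\zba)=(w_k-\vba)-\nabla^2(\lambda_k^Tg)(\yb)(v_k-\vb)$ is $O(\norm{(v_k,w_k)-(\vb,\vba)})$; (b) the polyhedral monotonicity $N_{\KbG}(v)\subseteq N_{\KbG}(\vb)$ (from \eqref{EqIncLTanNormalConePoly}) and $\Lb(v)\subseteq\Lb(\vb)$ for $v$ near $\vb$, which place the increments $(\lambda_k-\lb,z_k^*-\zba)$ in $\big(\Lb(\vb)\big)^+\times\big(N_\KbG(\vb)\big)^+$, the domain on which $\A_{\vb}$ is bijective; and only then (c) the Lipschitz continuity of $\A_{\vb}^{-1}$ from Lemma \ref{LemUnique_lambda}. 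Without (a) and (b) the bound does not follow from what you state. These are exactly the facts supplied in Step 1 of the paper's proof, so the gap is repairable, but as written the crucial inclusion rests on an appeal to the wrong regularity property.
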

%\begin{remark}
%Note that under the assumption of the proposition, $\bar{v}^*$ and $\bar z^*$ are related by $\bar v^*= 2\nabla^2(\eta^Tg)(\yb)\vb+\bar z^*$ and hence the definition in (\ref{criticalconeofcriticalcone}) is justified.
%\end{remark}
\begin{proof}
  Let $\vba$ with $(\vb,\vba)\in T_{\Gr \widehat N_\Gamma}(\yb,\yba)$  be fixed and let ${\cal R}$ denote the set on the right hand side of equation \eqref{EqTanConeTanConeGrNormalCone}.

  {\bf Step 1}. In this step we will show that $T_{T_{\Gr \widehat N_\Gamma}(\yb,\yba)}(\vb,\vba)\subseteq{\cal R}$. Let $(u,u^\ast)\in T_{T_{\Gr \widehat N_\Gamma}(\yb,\yba)}(\vb,\vba)$. Then by definition of the tangent cone, there exists  sequences $t_k\downarrow 0$, $(u_k,u_k^\ast)\to(u,u^\ast)$ with $(\vb+t_ku_k,\vba+t_ku_k^\ast)\in T_{\Gr \widehat N_\Gamma}(\yb,\yba)$. By \eqref{EqTanConeGrNormalCone} there are sequences $\lambda_k\in \Lb(\vb+t_ku_k)\cap\kappa\norm{\yba}\B_{\R^q}$ and $z_k^\ast\in N_\KbG(\vb+t_ku_k)$ such that
  \[\vba+t_ku_k^\ast=\nabla^2(\lambda_k^Tg)(\yb)(\vb+t_ku_k)+z_k^\ast.\]
%which, together with \eqref{EqReprVast}, yields
Moreover, since by Lemma \ref{LemUnique_lambda} there are unique elements ${\bar{\lambda}} \in \Lb(\vb)$ and $\zba\in N_{\KbG}(\vb)$
satisfying $\vba=\nabla^2(\lb^Tg)(\yb)\vb+\zba$, it follows that
\begin{equation}\label{secondorder} \nabla^2((\lambda_k-\lb)^Tg)(\yb)\vb+z_k^\ast-\zba=t_k\big(u_k^\ast-\nabla^2(\lambda_k^Tg)(\yb)u_k\big).\end{equation}
For all $k$ sufficiently large we have $N_\KbG(\vb+t_ku_k)\subseteq N_\KbG(\vb)$ by \eqref{EqIncLTanNormalConePoly} and $\Lb(\vb+t_ku_k)\subseteq\Lb(\vb)$ by \cite[Lemma 3]{GfrOut16b}. Hence we have
$\lambda_k-\lb\in\big(\Lb(\vb)\big)^+$ and $z_k^\ast-\zba$ in $(N_\KbG(\vb))^+$. Thus from (\ref{secondorder}), we have $$\A_{\vb}(\lambda_k-\lb,z_k^\ast-\zba):= \nabla^2((\lambda_k-\lb)^Tg)(\yb)\vb+z_k^\ast-\zba=t_k\big(u_k^\ast-\nabla^2(\lambda_k^Tg)(\yb)u_k\big).$$
   By the boundedness of $\lambda_k$ we conclude $t_k\big(u_k^\ast-\nabla^2(\lambda_k^Tg)(\yb)u_k\big)\to 0$. Hence, by Lemma \ref{LemUnique_lambda}  we have  $(\lambda_k-\lb, z_k^\ast-\zba)\to(0,0)$ and
\[(\mu,\zeta^\ast):=\lim_{k\to\infty}(\mu_k,\zeta_k^\ast)=\lim_{k\to\infty}\A_{\vb}^{-1}(u_k^\ast-\nabla^2(\lambda_k^Tg)(\yb)u_k)=\A_{\vb}^{-1}(u^\ast-\nabla^2(\lb^Tg)(\yb)u),\]
where $\mu_k:=\frac{\lambda_k-\lb}{t_k}$ and $\zeta_k^\ast:= \frac{z_k^\ast-\zba}{t_k}$.
Thus \begin{equation}\label{ustar*}
u^\ast=\nabla^2(\lb^Tg)(\yb)u+\A_{\vb}(\mu,\zeta^\ast)=\nabla^2(\lb^Tg)(\yb)u+\nabla^2(\mu^Tg)(\yb)\vb+\zeta^\ast. \end{equation}  Since
$z_k^\ast\in N_\KbG(\vb+t_ku_k)$ and $\lambda_k\in \Lb(\vb+t_ku_k)$ which is equivalent to saying that $(\vb+t_ku_k)^T\nabla^2g(\yb)(\vb+t_ku_k))\in N_{\bar \Lambda}(\lambda_k)$  by virtue of (\ref{EqChar_lambda}),  we have   $$(\vb+t_ku_k,\lambda_k,z_k^\ast, (\vb+t_ku_k)^T\nabla^2g(\yb)(\vb+t_ku_k))\in \Gr N_{\KbG\times \Lb}.$$
It follows that from  definition of tangent cone and the above that  $$(u,\mu,\zeta^\ast,2\vb^T\nabla^2g(\yb)u)\in T_{\Gr N_{\KbG\times\Lb}}(\vb,\lb,\zba,\vb^T\nabla^2g(\yb)\vb)=\Gr N_{\tilde K(\vb,\vba)},$$
where the equation follows from  (\ref{EqTanNormalConePolyhedral}). Thus combining the above inclusion and (\ref{ustar*}), we have that $(u,u^\ast)\in{\cal R}$ and the inclusion $T_{T_{\Gr \widehat N_\Gamma}(\yb,\yba)}(\vb,\vba)\subseteq{\cal R}$ is shown.

{\bf Step 2.} Now we show the reverse inclusion $T_{T_{\Gr \widehat N_\Gamma}(\yb,\yba)}(\vb,\vba)\supseteq{\cal R}$ in \eqref{EqTanConeTanConeGrNormalCone}.
{Let $(u,u^\ast)\in{\cal R}$. Then there exist $\mu, \zeta^\ast$ such that
\begin{eqnarray*}
&&  u^*=\nabla^2(\lb^Tg)(\yb)u+\nabla^2(\mu^Tg)(\yb)\vb+\zeta^\ast,\nonumber \\
&& (u,\mu,\zeta^\ast, 2\vb^T\nabla^2g(\yb)u)\in \Gr N_{\tilde K(\vb,\vba)}=T_{\Gr N_{\KbG\times\Lb}}(\vb,\lb,\zba,\vb^T\nabla^2g(\yb)\vb), %\label{u^*}
\end{eqnarray*}
where the second equation follows from  (\ref{EqTanNormalConePolyhedral}).

{First by applying Lemma \ref{LemAux} , we wish to show that
\begin{align} \Theta&:=T_{\{(v,\lambda,z^\ast)\mv \tilde P(v,\lambda,z^\ast)\in \tilde D\}}(\vb,\lb,\zba)=\{(u,\mu,\zeta^\ast)\mv \nabla \tilde P(\vb,\lb,\bar z^\ast)(u,\mu,\zeta^\ast)\in T_{\tilde D}(\tilde P(\vb,\lb,\bar z^\ast))\} \nonumber \\
&= \{(u,\mu,\zeta^\ast)\mv (u,\mu,\zeta^\ast, 2\vb^T\nabla^2g(\yb)u)\in T_{\Gr N_{\KbG\times\Lb}}(\vb,\lb,\zba,\vb^T\nabla^2g(\yb)\vb)\}, \label{Theta}
\end{align}
where
  $\tilde P_1(v,\lambda,z^\ast):=(v,\lambda)$, $\tilde P_2(v,\lambda,z^\ast):=(z^\ast, v^T\nabla^2g(\yb)v)$,   $\tilde D:=\Gr N_{ \KbG\times\Lb}$.
  %  and $\zb:=(\vb,\lb,\zba)$.
Let   $\zb:=(\vb,\lb,\zba)$. Define
   $L_1:=\big(N_\KbG(\vb)\big)^+\times \R^q\supseteq \big(N_{ \KbG\times\Lb}(\tilde P_1(\zb))\big)^+$ and
  $L_2:=\R^m\times \big(\Lb(\vb)\big)^+$.   Note that
\begin{equation}\label{criticalcone}  \K_{\Lb}(\lb,\vb^T\nabla^2g(\yb)\vb)=T_{\bar\Lambda}(\bar{\lambda}) \cap [\vb^T\nabla^2g(\yb)\vb]^\perp=T_{\Lb(\vb)}(\lb),\end{equation}
where the second equality follows from the fact that $\mu \in T_{\bar\Lambda}(\bar{\lambda}) \cap [\vb^T\nabla^2g(\yb)\vb]^\perp$ if and only if $ \bar{\lambda} +\alpha \mu  \in \bar{\Lambda}(\vb)$ for all $\alpha\geq 0$ sufficiently small. It follows together with (\ref{affinehull}) }
 that
$$L_2:=\R^m\times \big(\Lb(\vb)\big)^+\supseteq \big(\K_{ \KbG\times\Lb}(\tilde P_1(\zb),\tilde P_2(\zb))\big)^+.$$ } Next consider $(w^\ast,\lambda^\ast,z,\mu)\in L_1\times L_2$ satisfying
\[\nabla \tilde P(\zb)^T(w^\ast, \lambda^\ast,z, \mu)=(w^\ast+ 2\nabla^2(\mu^Tg)(\yb)\vb, \lambda^\ast,z)=(0,0,0).\]
Then $\nabla^2(\mu^Tg)(\yb)\vb=-w^\ast/2\in \big(N_{\KbG}(\vb)\big)^+$ and by the assumed 2-nondegeneracy we obtain $\mu=0$ and consequently $w^\ast=0$. Because we also have $\lambda^\ast=0$ and $z=0$, \eqref{EqRelaxedMo} is verified and by \eqref{EqTanPropAux} we obtain (\ref{Theta}).

It follows from (\ref{Theta}) that $(u,\mu, \zeta^*) \in \Theta$. Consequently by the  definition of the tangent cone $T_{\{(v,\lambda,z^\ast)\mv \tilde P(v,\lambda,z^\ast)\in \tilde D\}}(\vb,\lb,\zba)$,   there exist sequences $t_k\downarrow 0$ and $(u_k,\mu_k,\zeta_k^\ast)\to (u,\mu,\zeta^\ast)$ such that
% $u^\ast = \nabla^2(\lb^Tg)(\yb)u+\nabla^2(\mu^Tg)(\yb)\vb+\zeta^\ast$ and
\begin{align*}\lefteqn{\tilde P(\vb+t_ku_k, \lb+t_k\mu_k,\zba+t_k\zeta_k^\ast)}\\
&=(\vb+t_ku_k, \lb+t_k\mu_k,\zba+t_k\zeta_k^\ast, (\vb+t_ku_k)^T\nabla^2g(\yb)(\vb+t_ku_k))\in \Gr N_{\KbG\times\Lb}.
\end{align*}
By \eqref{EqTanConeGrNormalConeAlt} it follows that $(\vb+t_ku_k, \nabla^2\big((\lb+t_k\mu_k)^Tg\big)(\yb)(\vb+t_ku_k)+\zba+t_k\zeta_k^\ast)\in T_{\Gr \widehat N_\Gamma}(\yb,\yba)$ implying $(u,u^\ast)\in T_{T_{\Gr \widehat N_\Gamma}(\yb,\yba)}(\vb,\vba)$. Hence \eqref{EqTanConeTanConeGrNormalCone}  is shown.
\if{ Let $(u,u^\ast)\in{\cal R}$. Then there exist $\mu, \zeta^\ast$ such that
\begin{eqnarray}
&&  u^*=\nabla^2(\lb^Tg)(\yb)u+\nabla^2(\mu^Tg)(\yb)\vb+\zeta^\ast,\nonumber \\
&& (u,\mu,\zeta^\ast, 2\vb^T\nabla^2g(\yb)u)\in \Gr N_{\tilde K(\vb,\vba)}=T_{\Gr N_{\KbG\times\Lb}}(\vb,\lb,\zba,\vb^T\nabla^2g(\yb)\vb), \label{u^*}
\end{eqnarray}
where the second equation follows from  (\ref{EqTanNormalConePolyhedral}).

It follows from (\ref{u^*})   and the  definition of the tangent cone that   there exist sequences $t_k\searrow 0$ and $(u_k,\mu_k,\zeta_k^\ast)\to (u,\mu,\zeta^\ast)$ such that
% $u^\ast = \nabla^2(\lb^Tg)(\yb)u+\nabla^2(\mu^Tg)(\yb)\vb+\zeta^\ast$ and
\[(\vb+t_ku_k, \lb+t_k\mu_k,\zba+t_k\zeta_k^\ast, (\vb+t_ku_k)^T\nabla^2g(\yb)(\vb+t_ku_k))\in \Gr N_{\KbG\times\Lb}.\]
By \eqref{EqTanConeGrNormalConeAlt} it follows that $(\vb+t_ku_k, \nabla^2\big((\lb+t_k\mu_k)^Tg\big)(\yb)(\vb+t_ku_k)+\zba+t_k\zeta_k^\ast)\in T_{\Gr \widehat N_\Gamma}(\yb,\yba)$ implying $(u,u^\ast)\in T_{T_{\Gr \widehat N_\Gamma}(\yb,\yba)}(\vb,\vba)$. Hence \eqref{EqTanConeTanConeGrNormalCone}  is shown.

To show \eqref{EqRegNormalConeTanConeGrNormalCone}, first by applying Lemma \ref{LemAux} , we wish to show that
\begin{align} \Theta&:=T_{\{(v,\lambda,z^\ast)\mv \tilde P(v,\lambda,z^\ast)\in \tilde D\}}(\vb,\lb,\zba)=\{(u,\mu,\zeta^\ast)\mv \nabla \tilde P(\vb,\lb,\bar z^\ast)(u,\mu,\zeta^\ast)\in T_{\tilde D}(\tilde P(\vb,\lb,\bar z^\ast))\} \nonumber \\
&= \{(u,\mu,\zeta^\ast)\mv (u,\mu,\zeta^\ast, 2\vb^T\nabla^2g(\yb)u)\in T_{\Gr N_{\KbG\times\Lb}}(\vb,\lb,\zba,\vb^T\nabla^2g(\yb)\vb)\}, \label{Theta}
\end{align}
where
  $\tilde P_1(v,\lambda,z^\ast):=(v,\lambda)$, $\tilde P_2(v,\lambda,z^\ast):=(z^\ast, v^T\nabla^2g(\yb)v)$,   $\tilde D:=\Gr N_{ \KbG\times\Lb}$.
  %  and $\zb:=(\vb,\lb,\zba)$.
Let   $\zb:=(\vb,\lb,\zba)$. Define
   $L_1:=\big(N_\KbG(\vb)\big)^+\times \R^q\supset \big(N_{ \KbG\times\Lb}(\tilde P_1(\zb))\big)^+$ and
  $L_2:=\R^m\times \big(\Lb(\vb)\big)^+$.   Since
\begin{equation}\label{criticalcone}  \K_{\Lb}(\lb,\vb^T\nabla^2g(\yb)\vb)=T_{\bar\Lambda}(\bar{\lambda}) \cap [\vb^T\nabla^2g(\yb)\vb]^\perp=T_{\Lb(\vb)}(\lb),\end{equation}
where the second equality follows from the fact that $\mu \in T_{\bar\Lambda}(\bar{\lambda}) \cap [\vb^T\nabla^2g(\yb)\vb]^\perp$ if and only if $ \bar{\lambda} +\alpha \mu  \in \bar{\Lambda}(v)$ for all $\alpha\geq 0$,
we have that $L_2:=\R^m\times \big(\Lb(\vb)\big)^+\supset\big(\K_{ \KbG\times\Lb}(\tilde P_1(\zb),\tilde P_2(\zb))\big)^+$. Next consider $(w^\ast,\lambda^\ast,z,\mu)\in L_1\times L_2$ satisfying
\[\nabla \tilde P(\zb)^T(w^\ast, \lambda^\ast,z, \mu)=(w^\ast+ 2\nabla^2(\mu^Tg)(\yb)\vb, \lambda^\ast,z)=(0,0,0).\]
Then $\nabla^2(\mu^Tg)(\yb)\vb=-w^\ast/2\in \big(N_{\KbG}(\vb)\big)^+$ and by the assumed 2-nondegeneracy we obtain $\mu=0$ and consequently $w^\ast=0$. Because we also have $\lambda^\ast=0$ and $z=0$, \eqref{EqRelaxedMo} is verified and by \eqref{EqTanPropAux} we obtain (\ref{Theta}).}\fi

{\bf Step 3}. {To show \eqref{EqRegNormalConeTanConeGrNormalCone}, note that}
\begin{eqnarray}\lefteqn{\widehat N_{T_{\Gr \widehat N_\Gamma}(\yb,\yba)}(\vb,\vba)} \nonumber \\
&=&  (T_{T_{\Gr \widehat N_\Gamma}(\yb,\yba)}(\vb,\vba))^\circ \nonumber\\
&=& (\{ (u, u^*): u^*= \nabla^2(\lb^Tg)(\yb)u+\nabla^2(\mu^Tg)(\yb)\vb+\zeta^\ast, (u,\mu, \zeta^*)\in \Theta\})^\circ \nonumber \\
&=&
\{(w^\ast,w)\mv \skalp{w^\ast, u}+\skalp{w,\nabla^2(\lb^Tg)(\yb)u+\nabla^2(\mu^Tg)(\yb)\vb+\zeta^\ast}\leq 0  \quad \forall (u,\mu,\zeta^\ast)\in\Theta\} \nonumber\\
&=&\{(w^\ast,w)\mv (w^\ast+\nabla^2(\lb^Tg)(\yb)w, \vb^T\nabla^2 g(\yb)w,w)\in\Theta^\circ\}, \label{NormalC}
\end{eqnarray}
where the second equality follows from  (\ref{EqTanConeTanConeGrNormalCone}).
By \eqref{EqNormalPropAux} together with \eqref{EqNormalConePolyhedral} we have
\begin{eqnarray*}
\Theta^\circ& =& (T_{\{(v,\lambda,z^\ast)\mv \tilde P(v,\lambda,z^\ast)\in \tilde D\}}(\vb,\lb,\zba))^\circ\\
&=& \widehat{N}_{\{(v,\lambda,z^\ast)\mv \tilde P(v,\lambda,z^\ast)\in \tilde D\}}(\vb,\lb,\zba)) \\
&=& \nabla \tilde{P}(\vb,\lb, \zba)^T \big(\tilde K(\vb,\vba)\big)^\circ\times \tilde K(\vb,\vba)\\
&=&
\{(v^\ast+2\nabla^2(\eta^Tg)(\yb)\vb,\xi,v)\mv (v^\ast,\xi, v,\eta)\in \big(\tilde K(\vb,\vba)\big)^\circ\times \tilde K(\vb,\vba)\}\end{eqnarray*}
and \eqref{EqRegNormalConeTanConeGrNormalCone} follows from (\ref{NormalC}).
\end{proof}
Unless $\Lb$ is a singleton, $g$ can not be 2-nondegenerate in direction $\vb=0$. Hence, Proposition \ref{PropTanConeTanConeGrNormalCone} might not be useful in case when $\vb=0$ and $\Lb$ contains more than one element. We now want to cover this situation.
We denote for every $\vb\in \KbG$, $\vba\in N_{\KbG}(\vb)$ by $\Sigma(\vb,\vba)$ a nonempty subset of the extreme points  of $\Lb(\vb)$ such that for every direction $u\in\K_{\KbG}(\vb,\vba)$ we have
\[\Sigma(\vb,\vba)\cap \Lb(\vb+\beta u)\not=\emptyset\ \mbox{for all $\beta>0$ sufficiently small.}\]
We can always choose  $\Sigma(\vb,\vba)$ as the collection of all extreme points of $\Lb(\vb)$, because by \cite[Lemma 3.5]{Rob84} we have $\Lb(v)\subseteq\Lb(\vb)$ for every $v$ sufficiently close to $\vb$ and the set $\Lb(v)$ is a face of $\Lb(\vb)$ whose extreme points are also extreme points of $\Lb(\vb)$. However, it might be advantageous to choose $\Sigma(\vb,\vba)$ smaller to get a sharper inclusion in the following proposition.

\begin{proposition}\label{PropRegNormalConeTanConeGrNormalConeZero}
Let $\vba\in \KbG^\circ$. Then
\begin{eqnarray}\label{EqRegNormalConeTanConeGrNormalConeZero}\lefteqn{\widehat N_{T_{\Gr \widehat N_\Gamma}(\yb,\yba)}(0,\vba)}\\
\nonumber&&\hskip-2em\subseteq\bigcap_{\vb\in \Lsp(\KbG)}\Big\{(w^\ast,w)\mv \exists \lb\in\co\Sigma(\vb,\vba):\ (w^\ast+\nabla^2(\lb^Tg)(\yb)w,w)\in \big(\K_{\KbG}(0,\vba)\big)^\circ\times \K_{\KbG}(0,\vba)\Big\}.\end{eqnarray}
Moreover, for every $\vb\in \Lsp(\KbG)$ such that $g$ is 2-nondegenerate at $(\yb,\yba)$ in direction $\vb$ we have
\begin{equation}\label{EqRegNormalConeTanConeGrNormalConeZero1}\widehat N_{T_{\Gr \widehat N_\Gamma}(\yb,\yba)}(0,\vba)\subseteq \bigcap_{\lb\in\Lb(\vb)}\widehat N_{T_{\Gr \widehat N_\Gamma}(\yb,\yba)}(\vb,\nabla^2(\lb^Tg)(\yb)\vb+\vba).\end{equation}
\end{proposition}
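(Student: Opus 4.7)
The strategy is to exploit the lineality afforded by $\Lsp(\KbG)$—building test sequences along $\pm\vb$ that force a linear identity, and test sequences along $\vb+\beta u$ that give a one-sided inequality per critical direction $u$—and then aggregate over $u$ by a minimax argument to produce a single $\lb\in\co\Sigma(\vb,\vba)$. First, I observe that $(0,\vba)\in T_{\Gr\widehat N_\Gamma}(\yb,\yba)$ by \eqref{EqTanConeGrNormalCone} (take $v=0$, any $\lambda\in\Lb$, and $z^*=\vba\in\KbG^\circ$). To establish $w\in\K_{\KbG}(0,\vba)$, I use that $\KbG^\circ$ is a closed convex cone containing $\vba$, so by standard cone calculus $T_{\KbG^\circ}(\vba)=(\K_{\KbG}(0,\vba))^\circ$; then the test sequences $(0,\vba+t_kz^*)$ with $z^*\in T_{\KbG^\circ}(\vba)$ all lie in $T_{\Gr\widehat N_\Gamma}(\yb,\yba)$, and the regular-normal-cone condition gives $\langle w,z^*\rangle\leq 0$, hence $w\in\K_{\KbG}(0,\vba)$.

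Next, fix $\vb\in\Lsp(\KbG)$, which means $\nabla g_i(\yb)^T\vb=0$ for all $i\in\bar I$; in particular translating from $0$ by a multiple of $\vb$ does not alter the active index set. Hence for each $\lambda\in\Lb(\vb)$ the symmetric sequences $(\pm t_k\vb,\,\vba\pm t_k\nabla^2(\lambda^Tg)(\yb)\vb)$ both lie in $T_{\Gr\widehat N_\Gamma}(\yb,\yba)$: $\pm\vb\in\KbG$, $\Lb(\pm t_k\vb)=\Lb(\vb)\ni\lambda$ by scaling and evenness in $v$, and $N_\KbG(\pm t_k\vb)=\KbG^\circ\ni\vba$. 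The two resulting regular-normal inequalities combine to yield the crucial identity
\[\langle w^*+\nabla^2(\lambda^Tg)(\yb)w,\vb\rangle=0\qquad\forall\,\lambda\in\Lb(\vb).\]
To inject a critical direction $u\in\K_{\KbG}(0,\vba)$, choose $\lambda_u\in\Sigma(\vb,\vba)\cap\Lb(\vb+\beta u)$, which is nonempty for small $\beta>0$ by the defining property of $\Sigma(\vb,\vba)$, and consider $(t_k(\vb+\beta u),\,\vba+t_k\nabla^2(\lambda_u^Tg)(\yb)(\vb+\beta u))$. The active index set of $t_k(\vb+\beta u)$ equals $\bar I(u)$, so $N_\KbG(t_k(\vb+\beta u))=N_\KbG(u)$, and $\vba\in N_\KbG(u)$ follows by rewriting a representation of $\vba$ in $\KbG^\circ$ using $u\perp\vba$ and $u\in\KbG$. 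The regular-normal condition then gives $\langle w^*+\nabla^2(\lambda_u^Tg)(\yb)w,\vb+\beta u\rangle\leq 0$, and the $\vb$-part vanishes by the identity above (because $\lambda_u\in\Lb(\vb)$), leaving $\langle w^*+\nabla^2(\lambda_u^Tg)(\yb)w,u\rangle\leq 0$.

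At this stage I have, for every $u\in\K_{\KbG}(0,\vba)$, a $\lambda_u$ in the finite set $\Sigma(\vb,\vba)$ satisfying the per-direction inequality. I then invoke Sion's minimax theorem on the bilinear functional $(\lambda,u)\mapsto\langle w^*+\nabla^2(\lambda^Tg)(\yb)w,u\rangle$ over the compact convex set $\co\Sigma(\vb,\vba)$ and the compact convex set $\K_{\KbG}(0,\vba)\cap\B$, obtaining one $\lb\in\co\Sigma(\vb,\vba)$ with $\langle w^*+\nabla^2(\lb^Tg)(\yb)w,u\rangle\leq 0$ for every such $u$, i.e.\ $w^*+\nabla^2(\lb^Tg)(\yb)w\in(\K_{\KbG}(0,\vba))^\circ$; this proves \eqref{EqRegNormalConeTanConeGrNormalConeZero}. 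For \eqref{EqRegNormalConeTanConeGrNormalConeZero1} I fix $\vb$ with $g$ 2-nondegenerate in direction $\vb$ and $\lb\in\Lb(\vb)$; by Lemma \ref{LemUnique_lambda} the pair $(\lb,\vba)$ is the unique representation of $\vba':=\nabla^2(\lb^Tg)(\yb)\vb+\vba$ in $\Lb(\vb)\times N_\KbG(\vb)$, and Proposition \ref{PropTanConeTanConeGrNormalCone} supplies the explicit formula \eqref{EqRegNormalConeTanConeGrNormalCone} for $\widehat N_{T_{\Gr\widehat N_\Gamma}(\yb,\yba)}(\vb,\vba')$. The plan is to verify that formula for any $(w^*,w)\in\widehat N_{T_{\Gr\widehat N_\Gamma}(\yb,\yba)}(0,\vba)$: the $\mu$-component condition $\vb^T\nabla^2g(\yb)w\in N_{\Lb(\vb)}(\lb)$ is immediate from the identity of the previous paragraph (which in fact gives the stronger $\vb^T\nabla^2g(\yb)w\in(\Lb(\vb)-\lb)^\perp$), while an $\eta$ built from $\lb'-\lb$—with $\lb'$ the multiplier produced by the first inclusion—is used to route $w^*+\nabla^2(\lb^Tg)(\yb)w-2\nabla^2(\eta^Tg)(\yb)\vb$ into the required polar of $\tilde K(\vb,\vba')$. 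The main obstacles are the minimax step (verifying Sion's hypotheses with the truncated polyhedral cone and the polytope $\co\Sigma(\vb,\vba)$) and, in the last part, the algebraic bookkeeping required to choose $\eta$ so that the resulting vector actually lies in the prescribed polar cone.
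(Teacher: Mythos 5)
Your proof of the first inclusion \eqref{EqRegNormalConeTanConeGrNormalConeZero} is essentially the paper's own argument: the same test tangents $(0,\vba+\alpha z^\ast)$ to get $w\in\K_{\KbG}(0,\vba)$, the same symmetric curves along $\pm\vb$ to force the identity $\skalp{w^\ast+\nabla^2(\lambda^Tg)(\yb)w,\vb}=0$ for all $\lambda\in\Lb(\vb)$, the same curves along $\vb+\beta u$ with $\lambda_u\in\Sigma(\vb,\vba)\cap\Lb(\vb+\beta u)$, and the same bilinear minimax over $\co\Sigma(\vb,\vba)$ and $\K_{\KbG}(0,\vba)\cap\B$ to extract a single $\lb$. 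That part is correct and complete modulo routine details.

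For the second inclusion \eqref{EqRegNormalConeTanConeGrNormalConeZero1} there is a genuine gap. You propose to verify membership in $\widehat N_{T_{\Gr \widehat N_\Gamma}(\yb,\yba)}(\vb,\vba')$ via the representation \eqref{EqRegNormalConeTanConeGrNormalCone}, which forces you to exhibit $\eta\in T_{\Lb(\vb)}(\lb)$ with $w^\ast+\nabla^2(\lb^Tg)(\yb)w-2\nabla^2(\eta^Tg)(\yb)\vb\in\K_{\KbG}(0,\vba)^\circ$. The data you carry into this step — the multiplier $\lb'$ from the first inclusion and the linear identity along $\vb$ — do not suffice: testing against $u\in\K_{\KbG}(0,\vba)$, what you know is $\skalp{w^\ast,u}+w^T\nabla^2({\lb'}^Tg)(\yb)u\le 0$, so you would need $2\eta^T\big(\vb^T\nabla^2 g(\yb)u\big)\ge w^T\nabla^2\big((\lb-\lb')^Tg\big)(\yb)u$ for all such $u$. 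The discrepancy term has the Hessians acting on $w$ while the $\eta$-term has them acting on $\vb$, and the natural candidate $\eta$ proportional to $\lb'-\lb$ produces $(w\pm\vb)^T\nabla^2((\lb-\lb')^Tg)(\yb)u$, which has no sign on $\K_{\KbG}(0,\vba)$; nothing in your first part controls it. The paper avoids constructing $\eta$ altogether: it takes an arbitrary $(u,u^\ast)\in T_{T_{\Gr\widehat N_\Gamma}(\yb,\yba)}(\vb,\vba')$, uses the \emph{tangent} formula \eqref{EqTanConeTanConeGrNormalCone} and the sequences $(u_k,\mu_k,\zeta_k^\ast)$ from \eqref{Theta} to build curves $\big(\alpha(\vb+t_ku_k),\,\alpha\nabla^2((\lb+t_k\mu_k)^Tg)(\yb)(\vb+t_ku_k)+\vba+\alpha t_k\zeta_k^\ast\big)$ inside $T_{\Gr\widehat N_\Gamma}(\yb,\yba)$ emanating from $(0,\vba)$ — the nontrivial point being a face argument showing $\vba+\alpha t_k\zeta_k^\ast\in N_{\KbG}(\vb+t_ku_k)$ — and then passes to the limit to get $\skalp{w^\ast,u}+\skalp{w,u^\ast}\le0$ directly. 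You would need to supply either this construction or a genuine derivation of $\eta$; as written, the step you defer to ``algebraic bookkeeping'' is the actual content of the proof and your suggested route through it fails.
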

\begin{proof}
 Let $(w^\ast,w)\in \widehat N_{T_{\Gr \widehat N_\Gamma}(\yb,\yba)}(0,\vba)$ and let $\vb\in \Lsp(\KbG)=\KbG\cap (-\KbG)$ be arbitrarily fixed.  We first show that $w\in \K_{\KbG}(0,\vba)$.
For every $z^\ast\in T_{\KbG^\circ}(\vba)$ we have %$(0,\vba+\alpha z^\ast)
${\vba+\alpha z^\ast}\in \KbG^\circ=N_\KbG(0)$ for $\alpha> 0$ small enough. But by
(\ref{EqTanConeGrNormalCone}) with $v=0$, we have $(0,\vba+\alpha z^\ast)\in T_{\Gr \widehat N_\Gamma}(\yb,\yba)$  and thus $\skalp{w^\ast, 0}+\skalp{w,\vba+\alpha z^\ast-\vba}\leq 0$ implying $w\in \big(T_{\KbG^\circ}(\vba){\big)}^\circ=N_{\KbG^\circ}(\vba)=\K_{\KbG}(0,\vba)$.

 Next we show that there exists $\lb\in\co\Sigma(\vb,\vba)$ such that
 \begin{equation} \label{second}
 w^\ast+\nabla^2(\lb^Tg)(\yb)w\in \big(\K_{\KbG}(0,\vba)\big)^\circ.
 \end{equation}
    Note that  $-\vb\in\KbG$, $\Lb(\vb)=\Lb(-\vb)$, and since $\KbG$ is a convex polyhedral cone,
 $$\vba\in \KbG^\circ=T_{\KbG}(\bar v)^\circ=N_{\KbG}(\vb)=N_{\KbG}(-\vb).$$  Moreover by (\ref{EqChar_lambda}),
$$ \lambda\in\Lb (\bar v)\Leftrightarrow \bar v^T\nabla^2g(\yb) \bar v\in N_{\Lb}(\lambda). $$ Therefore by (\ref{EqTanConeGrNormalConeAlt}), $(\pm \alpha\vb, \pm\alpha\nabla^2(\lambda^Tg)(\yb)\vb+\vba)\in T_{\Gr \widehat N_\Gamma}(\yb,\yba)$, $\forall\alpha> 0$ sufficiently small, $\forall \lambda\in\Lb(\vb)$.
 By the definition of the regular normal cone we conclude
\[ \limsup_{\alpha \searrow 0}\frac{\skalp{w^\ast,\pm\alpha\vb}+\skalp{w,\pm\alpha\nabla^2(\lambda^Tg)(\yb)\vb+\vba-\vba}}\alpha=\pm\big(\skalp{w^\ast,\vb}+\skalp{w,\nabla^2(\lambda^Tg)(\yb)\vb}\big)\leq0\]
and therefore
\begin{equation}\label{eqn61}
\skalp{w^\ast,\vb}+\skalp{w,\nabla^2(\lambda^Tg)(\yb)\vb}=0\ \quad \forall \lambda\in\Lb(\vb).
\end{equation}
Consider $u\in\K_{\KbG}(\vb,\vba)$ {and} choose $\beta>0$ sufficiently small such $\Sigma(\vb,\vba)\cap \Lb(\vb+\beta u)\not=\emptyset$.   Then $u\in T_\KbG(\vb)$ and $u^T \vba=0$. It follows that $\vb+\beta u\in \KbG$ for $\beta>0$ small and hence $\langle \vba, \vb +\beta u \rangle =0$ due to the fact that $\vba \in N_\KbG(\vb)$. Hence $\vba \in N_\KbG( \vb+\beta u)$. Let  $\lambda\in\Sigma(\vb,\vba)\cap \Lb(\vb+\beta u)$ and  $\alpha>0$. Since $\lambda\in  \Lb(\alpha(\vb+\beta u))$ and $\vba \in N_{\KbG}(\vb+\beta u) $, by (\ref{EqTanConeGrNormalCone})  we have $$\big(\alpha(\vb+\beta u),\alpha\nabla^2(\lambda^Tg)(\yb)(\vb+\beta u)+\vba\big)\in T_{\Gr \widehat N_\Gamma}(\yb,\yba). $$ It follows by definition for  the regular normal  cone $\widehat N_{T_{\Gr \widehat N_\Gamma}(\yb,\yba)}(0,\vba)$ that
\begin{eqnarray*}
\lefteqn{ \limsup_{\alpha\downarrow 0}\frac{\skalp{w^\ast,\alpha(\vb+\beta u)}+\skalp{w, \alpha\nabla^2(\lambda^Tg)(\yb)(\vb+\beta u)+\vba-\vba}}{\alpha}}\\
&&=\beta\big(\skalp{w^\ast,u}+\skalp{w,\nabla^2(\lambda^Tg)(\yb)u}\big)\leq 0,\end{eqnarray*}
where the equality follows from (\ref{eqn61}).
Hence $$\skalp{w^\ast,u}+\skalp{w,\nabla^2(\lambda^Tg)(\yb)u}\leq 0 \quad \forall u\in\K_{\KbG}(\vb,\vba) , \lambda\in\Sigma(\vb,\vba)$$ and by taking into account that $\co \Sigma(\vb,\vba)$ is compact as the convex hull of a finite set, we obtain
\begin{eqnarray*}0&\geq& \max_{u\in\K_{\KbG}(\vb,\vba)\cap\B_{\R^m}}\min_{\lambda\in \co\Sigma(\vb,\vba)}\skalp{w^\ast,u}+\skalp{w,\nabla^2(\lb^Tg)(\yb)u}\\
&=&\min_{\lambda\in \co\Sigma(\vb,\vba)}\max_{u\in\K_{\KbG}(\vb,\vba)\cap\B_{\R^m}}\skalp{w^\ast,u}+\skalp{w,\nabla^2(\lb^Tg)(\yb)u}.\end{eqnarray*}
Hence there is $\lb\in \co\Sigma(\vb,\vba)$ such that $\max_{u\in\K_{\KbG}(\vb,\vba)}\skalp{w^\ast,u}+\skalp{w,\nabla^2(\lb^Tg)(\yb)u}\leq 0$.
Since $\vb\in \Lsp(\KbG)$, we have $T_{\KbG}(\vb)
=\KbG$ and $\K_{\KbG}(\vb,\vba)=\K_{\KbG}(0,\vba)$.
 Therefore (\ref{second}) holds. Putting all together, \eqref{EqRegNormalConeTanConeGrNormalConeZero} follows.

Let $\vb\in \Lsp(\KbG)$. We now show \eqref{EqRegNormalConeTanConeGrNormalConeZero1} under the assumption  that $g$ is 2-nondegenerate in direction $\vb$ at $(\yb,\yba)$. Let $ (w^*, w)\in \widehat N_{T_{\Gr \widehat N_\Gamma}(\yb,\yba)}(0,\vba)$.  Fixing $\lb\in\Lb(\vb)$, we wish to prove that
$ (w^*, w)\in \widehat N_{T_{\Gr \widehat N_\Gamma}(\yb,\yba)}(\vb,\nabla^2(\lb^Tg)(\yb)\vb+\vba)=(T_{T_{\Gr \widehat N_\Gamma}(\yb,\yba)}(\vb, \nabla^2 (\lb^Tg)(\yb)\vb+\vba))^\circ$.   So  consider $(u,u^\ast)\in T_{T_{\Gr \widehat N_\Gamma}(\yb,\yba)}(\vb, \nabla^2 (\lb^Tg)(\yb)\vb+\vba)$. By Proposition \ref{PropTanConeTanConeGrNormalCone} there are elements $\mu,\zeta^\ast$ such that
\begin{eqnarray*}
&&  u^\ast=\nabla^2(\lb^Tg)(\yb)u+\nabla^2(\mu^Tg)(\yb)\vb+\zeta^\ast,\\
&&
%\delHG{(u,\mu,\zeta^\ast)\in}
(u,\mu,\zeta^\ast, 2\vb^T\nabla^2g(\yb)u)\in  {\Gr N_{\tilde K(\vb,\nabla^2 (\lb^Tg)(\yb)\vb+\vba)}=} T_{\Gr N_{\KbG\times\Lb}}(\vb,\lb,%\zba
{\vba} ,\vb^T\nabla^2g(\yb)\vb).
\end{eqnarray*}
%It follows that
{By taking into account \eqref{Theta},}   there are sequences $t_k\downarrow 0$, $(u_k,\mu_k,\zeta_k^\ast)\to (u,\mu,\zeta^\ast)$
such that  for each $k$,
$\vba+t_k\zeta_k^\ast\in N_{\KbG}(\vb+t_ku_k), \ {(\bar v+t_k u_k)^T\nabla^2 g(\yb)(}\bar v+t_k u_k)\in N_{\bar{\Lambda}}(\bar \lambda +t_k \mu_k)$. Note that by (\ref{EqChar_lambda}), ${(\bar v+t_k u_k)^T\nabla^2 g(\yb)(}\bar v+t_k u_k)\in N_{\bar{\Lambda}}(\bar \lambda +t_k \mu_k)$ if and only if $ \lb+t_k\mu_k\in\Lb(\vb+t_ku_k)$, and so
$$ \lb+t_k\mu_k\in\Lb(\vb+t_ku_k), \quad \vba+t_k\zeta_k^\ast\in N_{\KbG}(\vb+t_ku_k).$$
The set $N_{\KbG}(\vb+t_ku_k)$ is a face of $\KbG^\circ$ and since the polyhedral convex cone $\KbG$ only has finitely many faces, after passing to a subsequence we can assume that $N_{\KbG}(\vb+t_ku_k)=F$ $\forall k$ for some face $F$ of $\KbG^\circ$. Since $F$ is closed, we obtain $\vba\in F$ and thus $\vba+\alpha t_k\zeta_k^\ast\in F=N_{\KbG}(\vb+t_ku_k)$ $\forall k$, $\forall \alpha\in[0,1]$. Hence, for every $k$ and every $\alpha\in[0,1]$ we have $\Big(\alpha(\vb+t_ku_k), \alpha \nabla^2\big((\lb+t_k\mu_k)^Tg\big)(\yb)(\vb+t_ku_k)+ \vba+\alpha t_k^\ast\zeta_k^\ast\Big)\in T_{\Gr \widehat N_\Gamma}(\yb,\yba)$ by (\ref{EqTanConeGrNormalCone}),  implying
\begin{eqnarray*}
0&\geq& \lim_{\alpha\downarrow 0}\frac{\skalp{w^\ast,\alpha(\vb+t_ku_k)}+\skalp{w, \alpha \nabla^2\big((\lb+t_k\mu_k)^Tg\big)(\yb)(\vb+t_ku_k)+ \vba+\alpha t_k\zeta_k^\ast-\vba}}{\alpha}\\
&=&\skalp{w^\ast,(\vb+t_ku_k)}+\skalp{w, \nabla^2\big((\lb+t_k\mu_k)^Tg\big)(\yb)(\vb+t_ku_k)+  t_k\zeta_k^\ast}\\
&=& t_k\big(\skalp{w^\ast,u_k}+\skalp{w, \nabla^2\big((\lb+t_k\mu_k)^Tg\big)(\yb)u_k+ \nabla^2(\mu_k^Tg)(\yb)\vb +  \zeta_k^\ast}\big)\end{eqnarray*}
for all $k$. Dividing by $t_k$ and passing to the limit we obtain
\[0\geq \skalp{w^\ast,u}+\skalp{w, \nabla^2(\lb^Tg)(\yb)u+ \nabla^2(\mu^Tg)(\yb)\vb +  \zeta^\ast}=\skalp{w^\ast,u}+\skalp{w,u^\ast}.\]
Thus $(w^\ast,w)\in (T_{T_{\Gr \widehat N_\Gamma}(\yb,\yba)}(\vb, \nabla (\lb^Tg)(\yb)\vb+\vba))^\circ$ and the inclusion \eqref{EqRegNormalConeTanConeGrNormalConeZero1} follows.
\end{proof}

\subsection{Regular normals to tangents of tangent cones}
Throughout this subsection let $(\vb,\vba)\in T_{\Gr \widehat N_\Gamma}(\yb,\yba)$ and $(\delta\vb,\delta\vba)\in T_{T_{\Gr \widehat N_\Gamma}(\yb,\yba)}(\vb,\vba)$ be given and we assume that $g$ is 2-nondegenerate in direction $\vb$ at $(\yb,\yba)$. Further let $(\lb, \zba)\in\Lb(\vb)\times N_\KbG(\vb)$ denote the unique element fulfilling
\eqref{EqReprVast}, i.e.,
$\vba =\nabla^2(\lb^Tg)(\yb)\vb+\zba$,   and let according to \eqref{EqTanConeTanConeGrNormalCone} $(\bar \mu,\bar\zeta^*)$ denote some element with
\begin{align}
\label{EqBarMu1}  &\delta\vba = \nabla^2(\lb^Tg)(\yb)\delta\vb+\nabla^2(\bar\mu^Tg)(\yb)\vb+\bar\zeta^\ast,\\
\label{EqBarMu2}  &(\delta\vb,\bar \mu,\bar \zeta^\ast, 2\vb^T\nabla^2g(\yb)\delta\vb )\in \Gr N_{\tilde K(\vb,\vba)}=T_{\Gr N_{\KbG\times\Lb}}(\vb,\lb,\zba,\vb^T\nabla^2g(\yb)\vb),
\end{align}
where the equality in (\ref{EqBarMu2}) follows from (\ref{EqTanNormalConePolyhedral}).
Note that by definition, $$\tilde K(\vb,\vba):=\K_{\KbG\times\Lb}(\vb,\lb,\zba,\vb^T\nabla^2g(\yb)\vb)$$ and hence it
 follows that $\bar\mu\in \K_{\Lb}(\lb,\vb^T\nabla^2 g(\yb)\vb)=T_{\Lb(v)}(\lb)$ where the equality follows from (\ref{criticalcone}), $\bar\zeta^\ast\in N_{\K_\KbG(\vb,\zba)}(\delta\vb)\subseteq (\K_\KbG(\vb,\zba))^\circ {\subseteq \big(N_\KbG(\vb)\big)^+.}$ %= N_\KbG(\vb)$.
By (\ref{EqBarMu1}),  $\A_{\vb}(\bar\mu,\bar\zeta^*)=\delta\vba - \nabla^2(\lb^Tg)(\yb)\delta\vb$ and from Lemma \ref{LemUnique_lambda} we conclude that $(\bar \mu,\bar\zeta^*)$ are unique.

%In the following lemma, we show that the property of 2-nondegeneraty at a direction $\bar{v}$ carries over to every direction in a neighborhood of $\bar{v}$.
%\begin{lemma}
%  \label{Lem2NondegOpen}There is a neighborhood $V$ of $\vb$ such that $g$ is 2-nondegenerate in every direction $v\in\KbG\cap V$ at $(\yb,\yba)$.
%\end{lemma}
%\begin{proof}We prove the result by contradiction.  Assume on the contrary that there are sequences $v_k\longsetto{\KbG}\vb$ and $0\not=\mu_k\in(\Lb(v_k))^+$ such that $\nabla^2(\mu_k^Tg)(\yb)v_k\in\big(N_{\KbG}(v_k)\big)^+$ $\forall k$. We can assume without loss of generality that $\norm{\mu_k}=1$ $\forall k$ and, after passing to a subequence, that $\mu_k$ converges to some $\bar\mu\not=0$. For every $k$ sufficiently large we have $N_{\KbG}(v_k)\subseteq N_{\KbG}(\vb)$ by \eqref{EqIncLTanNormalConePoly} and $\Lb(v_k)\subseteq\Lb(\vb)$ by \cite[Lemma 3]{GfrOut16b} implying $0\not =\bar\mu\in \big(\Lb(\vb)\big)^+$ and $\nabla^2(\bar \mu^Tg)(\yb)\vb\in\big(N_{\KbG}(\vb)\big)^+$, which contradicts the fact that $g$ is 2-nondegenerate in  $\bar v$ at $(\yb,\yba)$. Thus the lemma is shown.
%\end{proof}

\begin{proposition}\label{PropTanConeTanConeTanConeGrNormalCone}
Under the assumption stated in the beginning of this subsection, we have
\begin{eqnarray}\label{EqTanConTanConeTanConeGrNormalCone}
 \lefteqn{ T_{T_{T_{\Gr \widehat N_\Gamma}(\yb,\yba)}(\vb,\vba)}(\delta \vb,\delta \vba)} \nonumber \\
 && =\left \{ (u, u^*) \mv  \exists \delta \mu, \delta\zeta^\ast : \begin{array}{l}
  u^*=\nabla^2(\lb^Tg)(\yb)u+\nabla^2(\delta\mu^Tg)(\yb)\vb+\delta\zeta^\ast)\\
  (u,\delta\mu,\delta\zeta^\ast, 2\vb^T\nabla^2g(\yb)u)\in \Gr N_{\tilde K(\vb,\vba,\delta\vb,\delta\vba)}\end{array} \right \}
\end{eqnarray}
and
\begin{eqnarray}\label{EqRegNormalConeTanConeTanConeGrNormalCone}
 \lefteqn{ \widehat N_{T_{T_{\Gr \widehat N_\Gamma}(\yb,\yba)}(\vb,\vba)}(\delta \vb,\delta \vba)    }\nonumber \\
 && =\Big\{(w^\ast,w)\mv  \exists \eta:
  \begin{array}{l}  \big(w^\ast+\nabla^2(\lb^Tg)(\yb)w- 2\nabla^2(\eta^Tg)(\yb)\vb, \vb^T\nabla^2 g(\yb)w,w,\eta\big)\\
 \in \big(\tilde K(\vb,\vba,\delta\vb,\delta\vba)\big)^\circ\times \tilde K(\vb,\vba,\delta\vb,\delta\vba)\end{array} \Big\},
\end{eqnarray}
where $\tilde K(\vb,\vba,\delta\vb,\delta\vba):=\K_{\tilde K(\vb,\vba)}(\delta\vb,\bar\mu,\bar\zeta^*,2\vb^T\nabla^2g(\yb)\delta\vb)$.
\end{proposition}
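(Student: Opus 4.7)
The plan is to mirror essentially verbatim the argument of Proposition \ref{PropTanConeTanConeGrNormalCone}, exploiting the fact that the auxiliary set into which we have ``lifted'' the problem, namely $\Gr N_{\tilde K(\vb,\vba)}$, is polyhedral (since $\tilde K(\vb,\vba)$ is a polyhedral critical cone of $\KbG\times\Lb$). Concretely, Proposition \ref{PropTanConeTanConeGrNormalCone} represents $T_{\Gr \widehat N_\Gamma}(\yb,\yba)$ locally as the image of the feasible set of the system $\tilde P(v,\lambda,z^\ast)\in \Gr N_{\KbG\times\Lb}$ under the linear map $(v,\lambda,z^\ast)\mapsto (v,\nabla^2(\lambda^Tg)(\yb)v+z^\ast)$, where $\tilde P_1(v,\lambda,z^\ast)=(v,\lambda)$ and $\tilde P_2(v,\lambda,z^\ast)=(z^\ast,v^T\nabla^2g(\yb)v)$. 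Passing once to tangents at $(\vb,\vba)$ and invoking \eqref{EqTanNormalConePolyhedral} on the polyhedral set $\Gr N_{\KbG\times\Lb}$ produced the formula in Proposition \ref{PropTanConeTanConeGrNormalCone}, with the inner polyhedral set becoming $\Gr N_{\tilde K(\vb,\vba)}$. Iterating this construction, I would now take tangents at $(\delta\vb,\delta\vba)$, again of a system of the same shape but with $\Gr N_{\KbG\times\Lb}$ replaced by $\Gr N_{\tilde K(\vb,\vba)}$, and apply \eqref{EqTanNormalConePolyhedral} to the latter at the point $(\delta\vb,\bar\mu,\bar\zeta^\ast,2\vb^T\nabla^2g(\yb)\delta\vb)$. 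This tangent is, by definition, $\Gr N_{\tilde K(\vb,\vba,\delta\vb,\delta\vba)}$, which is exactly what appears in the target formulas.

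The proof will proceed in two directions mirroring the previous argument. For the inclusion $T_{T_{T_{\Gr \widehat N_\Gamma}(\yb,\yba)}(\vb,\vba)}(\delta\vb,\delta\vba)\subseteq {\cal R}$, I take sequences $t_k\downarrow 0$, $(u_k,u_k^\ast)\to(u,u^\ast)$ with $(\delta\vb+t_ku_k,\delta\vba+t_ku_k^\ast)\in T_{T_{\Gr \widehat N_\Gamma}(\yb,\yba)}(\vb,\vba)$, apply the representation from Proposition \ref{PropTanConeTanConeGrNormalCone} to obtain multipliers $(\mu_k,\zeta_k^\ast)$ with $(\delta\vb+t_ku_k,\mu_k,\zeta_k^\ast,2\vb^T\nabla^2g(\yb)(\delta\vb+t_ku_k))\in \Gr N_{\tilde K(\vb,\vba)}$, subtract the unique representation of $(\delta\vb,\delta\vba)$ given by $(\bar\mu,\bar\zeta^\ast)$ via the bijection $\A_\vb$, and divide by $t_k$. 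Here the key step, exactly as in Proposition \ref{PropTanConeTanConeGrNormalCone}, is that for large $k$ the differences $\mu_k-\bar\mu$ and $\zeta_k^\ast-\bar\zeta^\ast$ lie in $(\Lb(\vb))^+$ and $(N_\KbG(\vb))^+$ (using \eqref{EqIncLTanNormalConePoly} applied to the polyhedral cones $\tilde K(\vb,\vba)$ and its faces), so that the 2-nondegeneracy of $g$ in direction $\vb$ and the bijectivity of $\A_\vb$ in Lemma \ref{LemUnique_lambda} force convergence $(\mu_k,\zeta_k^\ast)\to(\bar\mu,\bar\zeta^\ast)$ and provide the limits $(\delta\mu,\delta\zeta^\ast)$. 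The membership $(u,\delta\mu,\delta\zeta^\ast,2\vb^T\nabla^2g(\yb)u)\in T_{\Gr N_{\tilde K(\vb,\vba)}}(\delta\vb,\bar\mu,\bar\zeta^\ast,2\vb^T\nabla^2g(\yb)\delta\vb)=\Gr N_{\tilde K(\vb,\vba,\delta\vb,\delta\vba)}$ then follows from \eqref{EqTanNormalConePolyhedral}.

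For the reverse inclusion, I would again apply Lemma \ref{LemAux} to the system $\tilde P(v,\lambda,z^\ast)\in \Gr N_{\tilde K(\vb,\vba)}$ at the base point $(\delta\vb,\bar\mu,\bar\zeta^\ast)$, using the subspaces $L_1:=(N_\KbG(\vb))^+\times\R^q$ and $L_2:=\R^m\times(\Lb(\vb))^+$. The Mordukhovich-type hypothesis \eqref{EqRelaxedMo} is precisely the same 2-nondegeneracy calculation already carried out in Step 2 of the previous proposition, only now relative to a face of $\Gr N_{\KbG\times\Lb}$; since $L_1$ and $L_2$ contain the lineality spaces of the respective polyhedral sets, this monotonicity allows me to reuse the 2-nondegeneracy of $g$ in direction $\vb$. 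Granted metric regularity, \eqref{EqTanPropAux} yields the tangent cone identification, and lifting through the linear map recovers \eqref{EqTanConTanConeTanConeGrNormalCone} after applying \eqref{EqTanNormalConePolyhedral} to the polyhedral $\tilde K(\vb,\vba)$. The regular normal cone formula \eqref{EqRegNormalConeTanConeTanConeGrNormalCone} then drops out by polarizing the tangent description exactly as in \eqref{NormalC} in the previous proof and invoking \eqref{EqNormalPropAux} of Lemma \ref{LemAux} together with \eqref{EqNormalConePolyhedral}.

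The main obstacle I foresee is purely bookkeeping: ensuring that the polyhedral inclusions \eqref{EqIncLTanNormalConePoly} and the 2-nondegeneracy of $g$ in direction $\vb$ (\emph{not} in direction $\delta\vb$, which is not assumed) still supply the injectivity of the appropriate linear map once we are taking tangents at $(\delta\vb,\bar\mu,\bar\zeta^\ast)$ inside the larger polyhedral set. Because the inner set $\Gr N_{\tilde K(\vb,\vba)}$ is polyhedral, no new second-order information is needed — the same bijection $\A_\vb$ from Lemma \ref{LemUnique_lambda} still controls the multiplier part, and \eqref{EqTanNormalConePolyhedral} absorbs the polyhedral tangent. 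Thus the proof should be a line-by-line adaptation of Proposition \ref{PropTanConeTanConeGrNormalCone} with $\Gr N_{\KbG\times\Lb}$ replaced by $\Gr N_{\tilde K(\vb,\vba)}$ and the base point shifted from $(\vb,\lb,\zba,\vb^T\nabla^2g(\yb)\vb)$ to $(\delta\vb,\bar\mu,\bar\zeta^\ast,2\vb^T\nabla^2g(\yb)\delta\vb)$.
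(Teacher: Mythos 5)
Your proposal is correct and follows essentially the same route as the paper: the forward inclusion via difference quotients of the multipliers $(\mu_k,\zeta_k^\ast)$, controlled by the observation that $\mu_k-\bar\mu\in\big(\Lb(\vb)\big)^+$ and $\zeta_k^\ast-\bar\zeta^\ast\in\big(N_\KbG(\vb)\big)^+$ so that the same bijection $\A_{\vb}$ from Lemma \ref{LemUnique_lambda} (2-nondegeneracy in direction $\vb$ only) applies, and the normal-cone formula via Lemma \ref{LemAux} at the shifted base point $(\delta\vb,\bar\mu,\bar\zeta^\ast)$ with the same $L_1,L_2$ and the lineality-space monotonicity \eqref{EqLspTanGrNormalCritCone}. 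The only cosmetic difference is that for the reverse tangent-cone inclusion the paper exploits polyhedrality of $\Gr N_{\tilde K(\vb,\vba)}$ to move along exact rays rather than re-invoking \eqref{EqTanPropAux}, which changes nothing of substance.
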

\begin{proof}
   We use similar arguments as in the proof of Proposition \ref{PropTanConeTanConeGrNormalCone}.
   Let ${\cal R}$ denote the set on the right hand side of \eqref{EqTanConTanConeTanConeGrNormalCone} and consider $(u, u^*)\in T_{T_{T_{\Gr \widehat N_\Gamma}(\yb,\yba)}(\vb,\vba)}(\delta \vb,\delta \vba)$ together with sequences $t_k\downarrow 0$ and $(u_k,u_k^*)\to(u,u^*)$ with $(\delta \vb+t_ku_k,\delta \vba+t_k u_k^*)\in T_{T_{\Gr \widehat N_\Gamma}(\yb,\yba)}(\vb,\vba)$.
   %Since $g$ is 2-nondegenerate in direction $\vb$ at $(\yb,\yba)$, from Lemma \ref{Lem2NondegOpen} it follows that $g$ is 2-nondegenerate in direction $\delta\vb+t_ku_k$ for all $k$ sufficiently large and therefore
  {By Proposition \ref{PropTanConeTanConeGrNormalCone} there are  elements $\mu_k$, $\zeta_k^*$ such that}
  \begin{align*}&\delta\vba +t_ku_k^*= \nabla^2(\lb^Tg)(\yb)(\delta \vb+t_ku_k) +\nabla^2(\mu_k^Tg)(\yb)\vb+\zeta_k^\ast,\\
  &(\delta \vb+t_ku_k, \mu_k,\zeta_k^\ast, 2\vb^T\nabla^2g(\yb)(\delta \vb+ t_k u_k)\in \Gr N_{\tilde K(\vb,\vba)}=T_{\Gr N_{\KbG\times\Lb}}(\vb,\lb,\zba,\vb^T\nabla^2g(\yb)\vb),
  \end{align*}
 where the equality in the second inclusion follows from (\ref{EqTanNormalConePolyhedral}). By taking into account \eqref{EqBarMu1} we obtain after rearranging
  \[u_k^*- \nabla^2(\lb^Tg)(\yb)u_k = \nabla^2(\frac{(\mu_k-\bar\mu)^T}{t_k}g)(\yb)\vb+\frac{\zeta_k^\ast-\bar\zeta^*}{t_k}.\]
 Similarly as shown in the paragraph before Proposition \ref{PropTanConeTanConeTanConeGrNormalCone}, we can show that both $\bar  \mu$ and $ \mu_k$  belong to $\K_{\Lb}(\lb,\vb^T\nabla^2 g(\yb)\vb)=T_{\Lb(v)}(\lb)$. Hence  we obtain $\mu_k-\bar\mu\in \big(\Lb(\vb)\big)^+$.  Further, $\bar\zeta^\ast\in N_{\K_\KbG(\vb,\zba)}(\delta\vb)\subseteq {\big(N_\KbG(\vb)\big)^+}$ %N_\KbG(\vb)$
 and $\zeta_k^\ast\in N_{\K_\KbG(\vb,\zba)}(\delta\vb+t_ku_k)\subseteq {\big(N_\KbG(\vb)\big)^+}$ %N_\KbG(\vb)$
  implying $\zeta_k^*-\bar\zeta^*\in \big(N_\KbG(\vb)\big)^+$. Thus, by Lemma \ref{LemUnique_lambda} the sequences  $\frac{\mu_k-\bar\mu}{t_k}$ and $\frac{\zeta_k^\ast-\bar\zeta^*}{t_k}$ converge to some elements $\delta\mu$ and $\delta\zeta^*$, respectively, with $u^*-\nabla^2(\lb^Tg)(\yb)u=\nabla^2(\delta\mu^Tg)(\yb)\vb+\delta \zeta^\ast$ and
  \[(u,\delta\mu,\delta\zeta^*, 2\vb^T\nabla^2g(\yb) u)\in T_{\Gr N_{\tilde K(\vb,\vba)}}(\delta\vb, \bar\mu,\bar\zeta^*, 2\vb^T\nabla^2g(\yb)\delta \vb)=\Gr N_{\tilde K(\vb,\vba,\delta\vb,\delta\vba)}\]
  verifying $(u,u^*)\in {\cal R}$.

  Now we prove the reverse inclusion of \eqref{EqTanConTanConeTanConeGrNormalCone}. Let  $(u,u^*)\in {\cal R}$. Then there  exist   $\delta\mu$ and $\delta\zeta^\ast$ such that
\begin{eqnarray}
&& u^*= \nabla^2(\lb^Tg)(\yb)u+\nabla^2(\delta\mu^Tg)(\yb)\vb+\delta\zeta^\ast, \label{ustarnew}\\
&& (u,\delta\mu,\delta\zeta^\ast, 2\vb^T\nabla^2g(\yb)u)\in \Gr N_{\tilde K(\vb,\vba,\delta\vb,\delta\vba)}\nonumber \\
&& \qquad \qquad =T_{\Gr N_{\tilde K(\vb,\vba)}}(\delta\vb,\bar\mu,\bar\zeta^*,2\vb^T\nabla^2g(\yb)\delta\vb)  \label{TangentG},
\end{eqnarray}
 where the equality in the second inclusion follows from (\ref{EqTanNormalConePolyhedral}) and the notation $\tilde K(\vb,\vba,\delta\vb,\delta\vba):=\K_{\tilde K(\vb,\vba)}(\delta\vb,\bar\mu,\bar\zeta^*,2\vb^T\nabla^2g(\yb)\delta\vb)$.
  Since $\tilde K(\vb,\vba):=\K_{\KbG\times\Lb}(\vb,\lb,\zba,\vb^T\nabla^2g(\yb)\vb)$ is a convex polyhedral set, $\Gr N_{\tilde K(\vb,\vba)}$ is polyhedral, it follows by (\ref{TangentG}) that     $$(\delta\vb +tu, \bar\mu +t\delta\mu,\bar\zeta^*+t\delta \zeta^*, 2\vb^T\nabla^2g(\yb)(\delta \vb+tu))\in \Gr N_{\tilde K(\vb,\vba)}$$ for all $t>0$ sufficiently small.
  %Since by (\ref{EqTanNormalConePolyhedral}), $\Gr N_{\tilde K(\vb,\vba)}=T_{\Gr N_{\KbG \times \Lb}}(\vb,\lb,\zba,\vb^T\nabla^2g(\yb)\vb)$,
 %  this in turn implies $$(\delta\vb +tu, \bar\mu +t\delta\mu,\bar\zeta^*+t\delta \zeta^*, 2\vb^T\nabla^2g(\yb)(\delta \vb+tu))\in
 %T_{\Gr N_{\KbG \times \Lb}}(\vb,\lb,\zba,\vb^T\nabla^2g(\yb)\vb).$$
 By (\ref{EqTanConeTanConeGrNormalCone}) and taking into account  (\ref{EqBarMu1}) and (\ref{ustarnew}), it follows that $$ (\delta \vb+tu,\delta \vba+t u^*)\in T_{T_{\Gr \widehat N_\Gamma}(\yb,\yba)}(\vb,\vba)$$ from which we can conclude $(u, u^*)\in  T_{T_{T_{\Gr \widehat N_\Gamma}(\yb,\yba)}(\vb,\vba)}(\delta \vb,\delta \vba)$. Thus \eqref{EqTanConTanConeTanConeGrNormalCone} is proven.

  In order to show \eqref{EqRegNormalConeTanConeTanConeGrNormalCone} note that by (\ref{EqTanConTanConeTanConeGrNormalCone}), $\widehat N_{T_{T_{\Gr \widehat N_\Gamma}(\yb,\yba)}(\vb,\vba)}(\delta \vb,\delta \vba)$ is the collection of all $(w^*,w)$ fulfilling
  \begin{align*}0&\geq \skalp{w^\ast,u}+    \skalp{w, \nabla^2(\lb^Tg)(\yb)u+\nabla^2(\delta\mu^Tg)(\yb)\vb+\delta\zeta^\ast}\\
   &=\skalp{w^*+\nabla^2(\lb^Tg)(\yb)w,u}+w^T\nabla^2(\delta\mu^Tg)(\yb)\vb+w^T\delta \zeta^*\end{align*}
   for all
  \[
   (u,\delta\mu,\delta\zeta^*) \in\Theta:=\{(u,\delta\mu,\delta\zeta^*)\mv (u,\delta\mu,\delta\zeta^\ast, 2\vb^T\nabla^2g(\yb)u)\in \Gr N_{\tilde K(\vb,\vba,\delta\vb,\delta\vba)}\},\]
   which is the same as $(w^*+\nabla^2(\lb^Tg)(\yb)w,w^T\nabla^2 g(\yb)\vb,w)\in\Theta^\circ$. In order to compute $\Theta^\circ$ we use Lemma \ref{LemAux} with the linear mappings
   $\tilde P_1(u,\delta\mu,\delta\zeta^*):=(u,\delta\mu)$, $\tilde P_2(u,\delta\mu,\delta\zeta^*):=(\delta\zeta^\ast, 2\vb^T\nabla^2g(\yb)u)$ and $C=\tilde K(\vb,\vba)$ and $\zb=(\delta\vb, \bar\mu,\bar\zeta^*)$. Indeed, for the subspaces $L_1,L_2$ defined in the proof of Proposition \ref{PropTanConeTanConeGrNormalCone} we have shown
   $\ker\nabla \tilde P(\zb)\cap (L_1\times L_2)=\{0\}$, where we have to take into account that $\nabla \tilde P$ coincides with the derivative of the mapping $\tilde P$ used in the proof of Proposition \ref{PropTanConeTanConeGrNormalCone} at $(\vb,\lb,\zba)$. Further, from \eqref{EqLspTanGraphNormalC} together with \eqref{EqLspTanGrNormalCritCone} and the definition of $\tilde K(\vb,\vba)$ we obtain
   \begin{align*}L_1^\perp\times L_2^\perp &\subseteq \Lsp\big(T_{\Gr N_{\KbG\times\Lb}}(\vb,\lb,\zba,\vb^T\nabla^2 g(\yb)\vb)\big)\\
   &\subseteq \Lsp\big(T_{\Gr N_{\K_{\KbG\times\Lb}(\vb,\lb,\zba,\vb^T\nabla^2 g(\yb)\vb)}}(\delta\vb, \bar\mu,\bar\zeta^*,2\vb^T\nabla^2 g(\yb)\delta\vb)\big)\\
   &=\Lsp\big(T_{\Gr N_{\tilde K(\vb,\vba)}}(\delta\vb, \bar\mu,\bar\zeta^*,2\vb^T\nabla^2 g(\yb)\delta\vb).
   \end{align*}
   Applying \eqref{EqLspTanGraphNormalC} once more we obtain $$L_1\supseteq  N_{\tilde K(\vb,\vba)}(\delta\vb, \bar\mu), \quad  L_2 \supseteq \K_{\tilde K(\vb,\vba)})(\delta\vb, \bar\mu,\bar\zeta^*,2\vb^T\nabla^2 g(\yb)\delta\vb).$$ Hence we can apply Lemma \ref{LemAux} to obtain
   \begin{eqnarray*}
   \lefteqn{\widehat N_{\{z\mv \tilde P(z)\in\tilde D\}}(\zb)}\\
   &&=\{w\mv \nabla \tilde P(\zb)w\in T_{\tilde D}(\tilde P(\zb))\}^\circ=
   \{w\mv \nabla \tilde P(\zb)w\in \Gr N_{\tilde K(\vb,\vba,\delta\vb,\delta\vba)}\}^\circ=\Theta^\circ\\
   &&=    \{(v^*+2\nabla^2(\eta^Tg)(\yb)\vb,\xi,v)\mv (v^*,\xi,v,\eta)\in \tilde K(\vb,\vba,\delta\vb,\delta\vba)^\circ\times \tilde K(\vb,\vba,\delta\vb,\delta\vba)\},\end{eqnarray*}
   where the second equality follows from (\ref{EqTanNormalConePolyhedral}),
   and hence \eqref{EqRegNormalConeTanConeTanConeGrNormalCone} follows.
\end{proof}

\section{New optimality condition for (MPEC)}
To establish the main optimality condition in Theorem \ref{ThOptCond}, we first apply Theorem \ref{ThLinM_StatBasic} to  problem \eqref{EqMPEC} to obtain the following lemma.
\begin{lemma}\label{LemOptCond}
  Assume that $(\xb,\yb)$ is a local minimizer for  problem \eqref{EqMPEC} fulfilling Assumption \ref{Ass1}. Further assume that $g$ is 2-nondegenerate in every nonzero critical  direction $0\not=v\in\KbG$ at $(\yb,\yba)$ with $\yba:=-\phi(\xb,\yb)$.  Then
  there are a direction $(\delta x,\delta y)$ and elements
  \begin{equation*}
  (\delta\vb,\delta\vba)\in T_{T_{\Gr \widehat N_\Gamma}(\yb,\yba)}\big(\delta y, -\nabla \phi(\xb,\yb)(\delta x,\delta y)\big),\ \  \delta a \in  T_{T_{\R^m_-}(G(\xb,\yb))}\big(\nabla G(\xb,\yb)(\delta x,\delta y)\big),\end{equation*}
  % $\delta a\in T_{T_{\R^m_-}(G(\xb,\yb))}\big(\nabla G(\xb,\yb)(\delta x,\delta y)\big)$
   together with multipliers
  \begin{equation*}(w^\ast,w)\in \widehat N_{T_{T_{\Gr \widehat N_\Gamma}(\yb,\yba)}\left(\delta y, -\nabla \phi(\xb,\yb)(\delta x,\delta y)\right)}(\delta\vb,\delta\vba),\
  \sigma\in N_{T_{T_{\R^m_-}(G(\xb,\yb))}\left(\nabla G(\xb,\yb)(\delta x,\delta y)\right)}(\delta a)\end{equation*}
  such that
  \begin{subequations}
  \begin{align}
\label{EqAuxOptCond_a}    &\nabla F(\xb,\yb){^T}(\delta x,\delta y)=0,\\
 \label{EqAuxOptCond_b}   &\nabla_x F(\xb,\yb)-\nabla_x\phi(\xb,\yb)^Tw+\nabla_xG(\xb,\yb)^T\sigma=0,\\
\label{EqAuxOptCond_c}    &\nabla_y F(\xb,\yb)+w^*-\nabla_y\phi(\xb,\yb)^Tw+\nabla_yG(\xb,\yb)^T\sigma=0,\\
\label{EqAuxOptCond_d}    &\delta y=0\ \Rightarrow\ \delta x=0,\\
\label{EqAuxOptCond_e}    &\delta y\not=0\ \Rightarrow\ T_{\Gr\widehat N_\Gamma}(\yb,\yba)\mbox{ is not locally polyhedral near $\big (\delta y,-\nabla \phi(\xb,\yb)(\delta x,\delta y) \big )$}.
  \end{align}
  \end{subequations}
  Further, if $\Lsp(\KbG)\not=\{0\}$ then $(\delta y,\delta\vb)\not=(0,0)$. Otherwise, if $\Lsp(\KbG)=\{0\}$ and $(\delta y,\delta\vb)=(0,0)$ then there is some $\lb\in \Sigma(0,\delta \vba)$ such that
  \begin{equation}  \label{EqAuxOptCondSuppl}
        (w^\ast+\nabla^2(\lb^Tg)(\yb)w,w)\in \big(\K_{\KbG}(0,\delta\vba)\big)^\circ\times \K_{\KbG}(0,\delta\vba).
  \end{equation}
\end{lemma}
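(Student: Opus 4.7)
The plan is to apply Theorem \ref{ThLinM_StatBasic} to problem \eqref{EqMPECAlt}, and --- in the non-polyhedral alternative --- to apply it a second time to the associated linearized problem. Throughout I would exploit the product structure of $D=\Gr\widehat N_\Gamma\times\R^p_-$, which lets tangent and regular normal cones split, and rely on the polyhedrality of the $\R^p_-$-factor. Assumption \ref{Ass1}(ii) supplies exactly the hypotheses needed to invoke Theorem \ref{ThLinM_StatBasic}, so one of its two alternatives must occur.

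In alternative (i), there is $\omega\in T_D(P(\xb,\yb))$ with $0\in\nabla F(\xb,\yb)+\nabla P(\xb,\yb)^T\widehat N_{T_D(P(\xb,\yb))}(\omega)$. I would set $(\delta x,\delta y):=0$ and write $\omega=((\delta \vb,\delta\vba),\delta a)$. Because $T_{T_K(0)}(0)=T_K(0)$ for any closed cone $K$ by Proposition \ref{closedcone}, the tangent-of-tangent memberships required by the statement collapse to memberships in $T_{\Gr\widehat N_\Gamma}(\yb,\yba)$ and $T_{\R^p_-}(G(\xb,\yb))$; the product decomposition of $\widehat N_{T_D(\cdot)}(\omega)$ then splits the stationarity inclusion into the two components \eqref{EqAuxOptCond_b}--\eqref{EqAuxOptCond_c}, while \eqref{EqAuxOptCond_a}, \eqref{EqAuxOptCond_d}, \eqref{EqAuxOptCond_e} are trivial or vacuous. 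In alternative (ii), I obtain a critical direction $\bar u=(\delta x,\delta y)$ with $\nabla F(\xb,\yb)^T\bar u=0$, $\nabla P(\xb,\yb)\bar u\notin\Lsp(T_D(P(\xb,\yb)))$, and non-local-polyhedrality at $\nabla P(\xb,\yb)\bar u$; since the $\R^p_-$-factor is polyhedral, the non-polyhedrality lies in the $\Gr\widehat N_\Gamma$-factor, giving \eqref{EqAuxOptCond_e} and forcing $\delta y\neq 0$, hence \eqref{EqAuxOptCond_d}. By Remark \ref{RemSubReg}(ii), $\bar u$ is a global minimizer of the linearized problem with hypotheses inherited, so I would apply Theorem \ref{ThLinM_StatBasic} once more at $\bar u$. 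Here 2-nondegeneracy in the nonzero direction $\vb=\delta y$ is crucial: Proposition \ref{PropTanConeTanConeGrNormalCone} exhibits $T_{T_{\Gr\widehat N_\Gamma}(\yb,\yba)}(\vb,\vba)$ as the graph-type set built from the polyhedral cone $\tilde K(\vb,\vba)$, so $T_{T_D(P(\xb,\yb))}(\nabla P(\xb,\yb)\bar u)$ is polyhedral. This rules out alternative (ii) of the second application, and its alternative (i) delivers the required $(\delta\vb,\delta\vba,\delta a)$ together with multipliers $(w^*,w,\sigma)$ satisfying \eqref{EqAuxOptCond_b}--\eqref{EqAuxOptCond_c}.

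For the clause on $\Lsp(\KbG)$: if $\delta y\neq 0$ then $(\delta y,\delta\vb)\neq(0,0)$ trivially. If $\delta y=0$ (forcing alternative (i)) and $\delta\vb=0$ as well, then Proposition \ref{PropTanConeTanConeGrNormalCone} is unavailable at $\vb=0$ and I would invoke Proposition \ref{PropRegNormalConeTanConeGrNormalConeZero}. When $\Lsp(\KbG)\neq\{0\}$, I would pick a nonzero $\vb\in\Lsp(\KbG)$, in which $g$ is 2-nondegenerate by assumption, and apply inclusion \eqref{EqRegNormalConeTanConeGrNormalConeZero1} to reinterpret $(w^*,w)$ as a regular normal at the new base point $(\vb,\nabla^2(\lb^Tg)(\yb)\vb+\delta\vba)$ with $\lb\in\Lb(\vb)$; since $\vb\in\Lsp(\KbG)$ gives $T_{\KbG}(\vb)=\KbG$ and hence $N_{\KbG}(\vb)=\KbG^\circ$, this point still lies in $T_{\Gr\widehat N_\Gamma}(\yb,\yba)$ by \eqref{EqTanConeGrNormalCone}, and the replacement produces $\delta\vb=\vb\neq 0$. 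When $\Lsp(\KbG)=\{0\}$, inclusion \eqref{EqRegNormalConeTanConeGrNormalConeZero} evaluated at the single admissible $\vb=0$ gives $(w^*+\nabla^2(\lb^Tg)(\yb)w,w)\in(\K_{\KbG}(0,\delta\vba))^\circ\times\K_{\KbG}(0,\delta\vba)$ for some $\lb\in\co\Sigma(0,\delta\vba)$; affineness of this condition in $\lb$ makes the set of admissible $\lb$ a nonempty convex compact subset of $\co\Sigma(0,\delta\vba)$, which therefore contains an extreme point of $\co\Sigma(0,\delta\vba)$ lying in $\Sigma(0,\delta\vba)$ by construction, proving \eqref{EqAuxOptCondSuppl}.

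The main obstacle will be the second application of Theorem \ref{ThLinM_StatBasic}: verifying that its alternative (ii) is ruled out by 2-nondegeneracy through the polyhedral representation of Proposition \ref{PropTanConeTanConeGrNormalCone}, and matching its output with the exact tangent-of-tangent form required by the statement. A secondary difficulty is the degenerate case $\vb=0$, where 2-nondegeneracy silently fails and one must substitute Proposition \ref{PropRegNormalConeTanConeGrNormalConeZero}, carefully distinguishing the two lineality regimes and reducing from $\co\Sigma(0,\delta\vba)$ to an extreme point in $\Sigma(0,\delta\vba)$.
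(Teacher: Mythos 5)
Your architecture is exactly the paper's (two applications of Theorem \ref{ThLinM_StatBasic}, with Proposition \ref{PropTanConeTanConeGrNormalCone} forcing the second application into its polyhedral branch, and Proposition \ref{PropRegNormalConeTanConeGrNormalConeZero} handling the case $\delta y=\delta\vb=0$), but two steps do not hold up. In alternative (ii) you obtain $\delta y\neq 0$ from the non-local-polyhedrality of $T_{\Gr\widehat N_\Gamma}(\yb,\yba)$ at $\big(\delta y,-\nabla\phi(\xb,\yb)(\delta x,\delta y)\big)$; this inference is unjustified, since nothing in your argument excludes non-polyhedrality at a point of the form $(0,v^\ast)$ with $v^\ast\in N_{\KbG}(0)$ (when $\Lb$ is not a singleton, arbitrarily small neighborhoods of such a point already see the full multiplier-indexed union structure of the tangent cone). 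The paper gets $\delta y\neq0$ from the \emph{final} assertion of Theorem \ref{ThLinM_StatBasic}: using \eqref{EqTanConeGrNormalConeAlt} one writes $T_D(P(\xb,\yb))$ as the graph of $M_c+M_p$ with $M_p(v)=N_{\KbG}(v)\times T_{\R^p_-}(G(\xb,\yb))$ polyhedral and $M_c$ obeying the bound \eqref{EqBoundM_C}, whence \eqref{EqKKT_W_NotZero} produces $\bar v\neq0$ with $\delta y=\bar v$. This step cannot be skipped: without $\delta y\neq0$ you may not invoke 2-nondegeneracy (assumed only for nonzero directions), so the polyhedrality of $T_{T_D(P(\xb,\yb))}(\nabla P(\xb,\yb)\ub)$, and with it the exclusion of alternative (ii) in the second application, collapses.

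The second problem is your passage from $\co\Sigma(0,\delta\vba)$ to $\Sigma(0,\delta\vba)$. The set of $\lb\in\co\Sigma(0,\delta\vba)$ satisfying \eqref{EqAuxOptCondSuppl} is indeed a nonempty convex polytope, but its extreme points need not be extreme points of $\co\Sigma(0,\delta\vba)$: the affine condition in $\lb$ can cut $\co\Sigma(0,\delta\vba)$ through its relative interior, so the vertices of the intersection need not lie in $\Sigma(0,\delta\vba)$. Your argument therefore only yields some $\lb\in\co\Sigma(0,\delta\vba)$. To be fair, you have isolated a genuine imprecision: \eqref{EqRegNormalConeTanConeGrNormalConeZero} delivers exactly $\lb\in\co\Sigma(0,\delta\vba)$, and the paper's own proof of \eqref{EqAuxOptCondSuppl} cites nothing more than that inclusion with $\vb=0$, so what is actually established is the statement with $\co\Sigma(0,\delta\vba)$ in place of $\Sigma(0,\delta\vba)$; but your extreme-point repair does not close that gap.
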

\begin{proof}
Let $z:=(x,y), \zb=(\xb,\yb), P(x,y):=\left(\begin{array}{c}(y,-\phi(x,y))\\G(x,y)\end{array}\right),  D:=\Gr \widehat N_\Gamma\times \R^p_-$. Assumption \ref{Ass1} ensures that Theorem \ref{ThLinM_StatBasic} is applicable and so one of  Theorem \ref{ThLinM_StatBasic}(i) and Theorem \ref{ThLinM_StatBasic}(ii) holds.

 If Theorem \ref{ThLinM_StatBasic}(i)  is fulfilled, then there exists a direction $\omega=(\delta\vb,\delta\vba,\delta a)\in T_D(P(\zb))=T_{\Gr \widehat N_\Gamma}(\yb,\yba)\times T_{\R^m_-}(G(\xb,\yb))$ and a multiplier $$\omega^*=(w^*,w,\sigma)\in \widehat N_{T_D(P(\zb))}(w)=\widehat N_{T_{\Gr \widehat N_\Gamma}(\yb,\yba)}(\delta \vb,\delta\vba)\times N_{T_{\R^m_-}(G(\xb,\yb))}(\delta a)$$ such that
$ 0=\nabla F(\xb,\yb)+\nabla P(\xb,\yb)^T \omega^*.$  By virtue of (\ref{normaltangent}),  we see that the conditions \eqref{EqAuxOptCond_a}-\eqref{EqAuxOptCond_c}  are fulfilled with $\delta x=0$, $\delta y=0$.
 Otherwise  Theorem \ref{ThLinM_StatBasic}(ii)  is fulfilled, i.e.,  there is a direction $\bar u=(\delta x,\delta y)$ with
\begin{eqnarray}
\lefteqn{\nabla P(\zb)\bar u =\big(\delta y,-\nabla \phi(\xb,\yb)(\delta x,\delta y),\nabla G(\xb,\yb)(\delta x,\delta y)\big)} \label{deltay}\\
&&\in T_D\big(P(\zb)\big)=T_{\Gr \widehat N_\Gamma}(\yb,\yba)\times T_{\R^p_-}(G(\xb,\yb))\nonumber\end{eqnarray}
 fulfilling \eqref{EqNotZero}, $\nabla F(\xb,\yb)(\delta x,\delta y)=0$ which is (\ref{EqAuxOptCond_a}) and  (\ref{EqKKTNormal}) such that $T_D(P(\zb))$ is not locally polyhedral near $\nabla P(\zb)\bar u$, which is equivalent to the requirement that $T_{\Gr\widehat N_\Gamma}(\yb,\yba)$ is not locally polyhedral near $(\delta y,-\nabla \phi(\xb,\yb)(\delta x,\delta y))$ due to the polyhedrality of $T_{\R^p_-}\big(G(\xb,\yb)\big)$. From \eqref{EqTanConeGrNormalConeAlt} we see that $T_{\Gr \widehat N_\Gamma\times \R^p_-}((\yb,\yba),G(\xb,\yb))$ is the graph of a set-valued mapping $M=M_c+M_p$, where $M_p(v):=N_{\KbG}(v)\times T_{\R^p_-}(G(\xb,\yb))$ is polyhedral and $$M_c(v):=\{\nabla^2(\lambda^Tg)(y)v\mv \lambda\in\Lbv\cap \kappa\norm{\yba} \B_{\R^q}\}\times\{0\}$$ fulfills \eqref{EqBoundM_C}. Further, the graphs of $M_p$ and $M_c$ are closed cones and from \eqref{EqKKT_W_NotZero} we conclude that $\delta y\not=0$ by taking account of (\ref{deltay}).
 {Next we utilize \eqref{EqKKT}, which says $\nabla F(\zb)^T\bar u=0$. Since by the assumed GGCQ we have \[\nabla F(\zb)^Tu\geq 0 \ \forall u\ \mbox{s.t.}\ \nabla P(\zb)u\in T_D(P(\zb)),\]\\
 $\ub$ is a global minimizer of  problem
 $$ \min \nabla   F(\zb){^T}u \quad \mbox{ subject to } \nabla P(\zb)u\in T_D(P(\zb)). $$}
  \if{Next we utilize \eqref{EqKKTNormal}, which says $$0\in \nabla F(\zb)+\widehat N_{\{u\mv\nabla P(\zb)u\in T_D(P(\zb))\}}(\ub).$$

 By \cite[Theorems 6.12, 6.11]{RoWe98}, there exists a continuously differentiable function $\widetilde F : \R^{n+m} \rightarrow \R$ with  $\nabla \widetilde{F}(\ub)=  \nabla F(\zb)$ such that $\ub$ is a global minimizer of  problem
 $$ \min \widetilde  F(u) \quad \mbox{ subject to } \nabla P(\zb)u\in T_D(P(\zb)). $$}\fi
Similarly as in Remark \ref{RemSubReg}(ii),  we can apply Theorem \ref{ThLinM_StatBasic} once more to the above problem, because metric subregularity of $u\rightrightarrows \nabla P(\zb)u-T_D(P(\zb))$ at $(0,0)$ implies metric subregularity at $(\bar u,0)$ by \cite[Lemma 3]{Gfr18} and therefore also GGCQ for the system $\nabla P(\zb)u\in T_D(P(\zb))$ at $\bar u$. This means that the set $\Omega=\{z\mv P(z)\in D\}$ is replaced by the set $\{u\mv\nabla P(\zb)u\in T_D\big(P(\zb)\big)\}$, whose linearized tangent cone at $\ub$ is $\{u\mv \nabla P(\zb)u\in T_{T_D(P(\zb))}(\nabla P(\zb)\ub)\}$. Since
\[T_{T_D(P(\zb))}(\nabla P(\zb)\ub)=T_{T_{\Gr \widehat N_\Gamma}(\yb,\yba)}\big(\delta y, -\nabla \phi(\xb,\yb)(\delta x,\delta y)\big)\times T_{T_{\R^m_-}(G(\xb,\yb))}\big(\nabla G(\xb,\yb)(\delta x,\delta y)\big)\]
and $g$ is 2-nondegenerate in direction $\delta y\not=0$, by \eqref{EqTanConeTanConeGrNormalCone} the set $T_{T_D(P(\zb))}(\nabla P(\zb)\ub)$ is polyhedral and therefore only the first alternative of Theorem \ref{ThLinM_StatBasic} is possible. Hence there is a direction $\omega=(\delta \vb,\delta \vba,\delta a)\in T_{T_D(P(\zb))}(\nabla P(\zb)\ub)$ and a multiplier
\begin{align*}\omega^*&=(w^*,w,\sigma)\in \widehat N_{T_{T_D(P(\zb))}(\nabla P(\zb)\ub)}(\omega)\\
&= \widehat N_{T_{T_{\Gr \widehat N_\Gamma}(\yb,\yba)}\left(\delta y, -\nabla \phi(\xb,\yb)(\delta x,\delta y)\right)}(\delta\vb,\delta\vba)\times N_{T_{T_{\R^m_-}(G(\xb,\yb))}\left(\nabla G(\xb,\yb)(\delta x,\delta y)\right)}(\delta a)\end{align*}
with $0\in \nabla F(\zb)+\nabla P(\zb)^T\omega^*$ which results in \eqref{EqAuxOptCond_b} and  \eqref{EqAuxOptCond_c}.

Now consider the case when $\delta y=0$. In this case we must have  $\delta x=0$. Then by Proposition \ref{closedcone},
\[(w^\ast,w)\in \widehat N_{T_{T_{\Gr \widehat N_\Gamma}(\yb,\yba)}\left(0, 0\right)}(\delta\vb,\delta\vba)= \widehat N_{T_{\Gr \widehat N_\Gamma}(\yb,\yba)}(\delta\vb,\delta\vba).\]
If $\delta \vb=0$ and $\Lsp(\KbG)\not=\{0\}$, then by \eqref{EqRegNormalConeTanConeGrNormalConeZero1} we also have
\[(w^\ast,w)\in \widehat N_{T_{\Gr \widehat N_\Gamma}(\yb,\yba)}\big (\vb,\nabla^2(\lb^Tg)(\yb)\vb+\delta\vba \big)\]
for every $0\not=\vb\in \Lsp(\KbG)\not=\{0\}$ and every $\lb\in\Lb(\vb)$ and therefore we can assume $\delta\vb\not=0$. Otherwise, if  $\delta \vb=0$ and $\Lsp(\KbG)=\{0\}$ then \eqref{EqAuxOptCondSuppl} follows from \eqref{EqRegNormalConeTanConeGrNormalConeZero} by taking $\vb=0$.
\end{proof}

Now we are ready to state and prove our main optimality condition for  problem \eqref{EqMPEC}. The main task is to interpret the formulas for the tangent cones and the regular normal cones in Propositions \ref{PropTanConeTanConeGrNormalCone}-\ref{PropTanConeTanConeTanConeGrNormalCone} appearing in Lemma \ref{LemOptCond} in terms of problem data.
\begin{theorem}
  \label{ThOptCond}   Assume that $(\xb,\yb)$ is a local minimizer for  problem \eqref{EqMPEC} fulfilling Assumption \ref{Ass1}. Further assume that $g$ is 2-nondegenerate in every nonzero critical  direction $0\not=v\in\KbG$ at $(\yb,\yba)$, where $\yba:=-\phi(\xb,\yb)$.  Then
  there are  $\vb\in\KbG$,  $\zba\in N_{\KbG}(\vb)$, $\lb\in \Lb(\vb)$,
  %$\delta v\in \K_{\KbG}(\vb,\zba)$,
  two faces $\F^v_1,\F^v_2$ of $\K_{\KbG}(\vb,\zba)$ with $\F^v_2\subseteq\F^v_1$, $\delta v\in\ri \F^v_2$, two faces $\F^\lambda_1,\F^\lambda_2$ of $T_{\Lb(\vb)}(\lb)$ with $\F^\lambda_2
  \subseteq \F^\lambda_1$, $w\in \F^v_1-\F^v_2$, $\eta\in \F^\lambda_1-\F^\lambda_2$ and  $\sigma\in\R^p_+$ such that
  \begin{subequations}
  \label{EqOptCond}
  \begin{align}
  \label{EqOptCond_a}&\nabla_x F(\xb,\yb)-\nabla_x\phi(\xb,\yb)^Tw+\nabla_xG(\xb,\yb)^T\sigma=0,\\
  \label{EqOptCond_b}&\nabla_y F(\xb,\yb)-\nabla_y\phi(\xb,\yb)^Tw+\nabla_yG(\xb,\yb)^T\sigma\\
  \nonumber&\hspace{4cm} -\nabla^2(\lb^Tg)(\yb)w+ 2\nabla^2(\eta^Tg)(\yb)\vb\in -(\F^v_1-\F^v_2)^\circ,\\
  \label{EqOptCond_c}&\vb^T\nabla^2 g(\yb)w\in (\F^\lambda_1-\F^\lambda_2)^\circ,\\
  \label{EqOptCond_d}&  \vb^T\nabla^2 g(\yb)\delta v\in T_{\Lb(\vb)}(\lb)^\circ, \F^\lambda_1=T_{\Lb(\vb)}(\lb)\cap [\vb^T\nabla^2 g(\yb)\delta v]^\perp,\\
  \label{EqOptCond_e}&\sigma_iG_i(\xb,\yb)=0,\ i=1,\ldots,p.
  \end{align}
  \end{subequations}
{Furthermore, if $\F^v_1-\F^v_2=\K_{\KbG}(\vb,\zba)$ and $\F^\lambda_1-\F^\lambda_2=T_{\Lb(\vb)}(\lb)$ then one of the following two cases must occur:  case (a) $\vb\not=0$;
%\delHG{ and  $\Lsp(\KbG)\not=\{0\}$}
  case (b)   $\vb=0$ and $\Lsp(\KbG)=\{0\}$ and   $\lb\in \Sigma(0,\zba)$.}\\
  Otherwise,  if $\F^v_1-\F^v_2\not=\K_{\KbG}(\vb,\zba)$ or $\F^\lambda_1-\F^\lambda_2\not=T_{\Lb(\vb)}(\lb)$
  then $\vb\not=0$ and there is some $\delta x\in\R^n$ such that
  \begin{subequations}
  \begin{align}
    &\nabla F(\xb,\yb){^T}(\delta x,\vb)=0, \label{EqOptCondIIa}\\
    &\nabla \phi(\xb,\yb)(\delta x,\vb)+\nabla^2(\lb^Tg)(\yb)\vb+\zba=0,\label{EqOptCondIIb}\\
    &\nabla G_i(\xb,\yb)^T (\delta x,\vb)\leq 0,\ \sigma_i\nabla G_i(\xb,\yb)^T (\delta x,\vb)=0,\ \forall i:G_i(\xb,\yb)=0,\label{EqOptCondIIc}
  \end{align}
  \end{subequations}
  and $T_{\Gr\widehat N_\Gamma}(\yb,\yba)$  is not locally polyhedral near $(\vb,-\nabla \phi(\xb,\yb)(\delta x,\vb))$.
\end{theorem}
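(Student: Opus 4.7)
The plan is to combine Lemma \ref{LemOptCond} with the explicit representations of the relevant regular normal cones derived in Propositions \ref{PropTanConeTanConeGrNormalCone}, \ref{PropRegNormalConeTanConeGrNormalConeZero} and \ref{PropTanConeTanConeTanConeGrNormalCone}, and then repackage the resulting polyhedral stationarity conditions via the face calculus of Proposition \ref{PropBasicPropCone}. First I would invoke Lemma \ref{LemOptCond} to extract a direction $(\delta x,\delta y)$, tangent elements $(\delta\vb,\delta\vba)$ and $\delta a$, and multipliers $(w^\ast,w),\sigma$ satisfying \eqref{EqAuxOptCond_a}--\eqref{EqAuxOptCond_e} (supplemented by \eqref{EqAuxOptCondSuppl} in the exceptional case). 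Since $\R^p_-$ is polyhedral, the condition $\sigma\in N_{T_{T_{\R^p_-}(G(\xb,\yb))}(\nabla G(\xb,\yb)(\delta x,\delta y))}(\delta a)$ instantly delivers $\sigma\in\R^p_+$ with $\sigma_iG_i(\xb,\yb)=0$, producing \eqref{EqOptCond_e} and, in the second branch, also the orthogonality part of \eqref{EqOptCondIIc}; combined with \eqref{EqAuxOptCond_b} this establishes \eqref{EqOptCond_a}.

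The core task is to rewrite the inclusion on $(w^\ast,w)$ in the format demanded by \eqref{EqOptCond_b}--\eqref{EqOptCond_d}. I would split into two cases. \textbf{Case A:} $\delta y\neq 0$. Setting $\vb:=\delta y$ and $\vba:=-\nabla\phi(\xb,\yb)(\delta x,\delta y)$, Lemma \ref{LemUnique_lambda} (applicable by 2-nondegeneracy in direction $\vb$) supplies the unique $\lb\in\Lb(\vb)$, $\zba\in N_\KbG(\vb)$ with $\vba=\nabla^2(\lb^Tg)(\yb)\vb+\zba$. Formula \eqref{EqRegNormalConeTanConeTanConeGrNormalCone} then produces $\eta\in\R^q$ satisfying
\[\bigl(w^\ast+\nabla^2(\lb^Tg)(\yb)w-2\nabla^2(\eta^Tg)(\yb)\vb,\ \vb^T\nabla^2 g(\yb)w,\ w,\ \eta\bigr)\in\tilde K^\circ\times\tilde K,\]
with $\tilde K:=\K_{\tilde K(\vb,\vba)}(\delta\vb,\bar\mu,\bar\zeta^\ast,2\vb^T\nabla^2 g(\yb)\delta\vb)$ a critical cone of the convex polyhedral cone $\tilde K(\vb,\vba)=\K_{\KbG\times\Lb}(\vb,\lb,\zba,\vb^T\nabla^2 g(\yb)\vb)$. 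Using the product structure together with the identity $\K_\Lb(\lb,\vb^T\nabla^2 g(\yb)\vb)=T_{\Lb(\vb)}(\lb)$ from \eqref{criticalcone}, and the explicit index-set description of Proposition \ref{PropBasicPropCone}, I would express $\tilde K$ as $(\F^v_1-\F^v_2)\times(\F^\lambda_1-\F^\lambda_2)$, where $\F^v_i$ are faces of $\K_\KbG(\vb,\zba)$ with $\delta\vb\in\ri\F^v_2$ and $\F^\lambda_i$ are faces of $T_{\Lb(\vb)}(\lb)$ with $\bar\mu\in\ri\F^\lambda_2$ and $\F^\lambda_1$ determined by the active indices of the $\Lb$-component; the pair of identities in \eqref{EqOptCond_d} records exactly this choice. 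Substituting this product decomposition into the displayed inclusion and separating the $v$- and $\lambda$-components produces \eqref{EqOptCond_b}--\eqref{EqOptCond_c}. The theorem's dichotomy then reads as follows: $\F^v_1-\F^v_2=\K_\KbG(\vb,\zba)$ and $\F^\lambda_1-\F^\lambda_2=T_{\Lb(\vb)}(\lb)$ precisely when the critical-cone reduction is trivial, i.e., $\tilde K=\tilde K(\vb,\vba)$; otherwise $(\delta x,\delta y)$ persists as a genuine critical direction, so \eqref{EqAuxOptCond_a} gives \eqref{EqOptCondIIa}, the definition of $\vba$ gives \eqref{EqOptCondIIb}, the complementarity from $\sigma$ gives \eqref{EqOptCondIIc}, and \eqref{EqAuxOptCond_e} furnishes the non-polyhedrality statement.

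\textbf{Case B:} $\delta y=0$, hence $\delta x=0$ by \eqref{EqAuxOptCond_d}. Proposition \ref{closedcone} collapses $T_{T_{\Gr\widehat N_\Gamma}(\yb,\yba)}(0,0)$ to $T_{\Gr\widehat N_\Gamma}(\yb,\yba)$, so $(w^\ast,w)\in\widehat N_{T_{\Gr\widehat N_\Gamma}(\yb,\yba)}(\delta\vb,\delta\vba)$. If $\delta\vb\neq 0$, I would relabel $\vb:=\delta\vb$, $\vba:=\delta\vba$ and run the identical face-extraction argument based on formula \eqref{EqRegNormalConeTanConeGrNormalCone} of Proposition \ref{PropTanConeTanConeGrNormalCone} in place of \eqref{EqRegNormalConeTanConeTanConeGrNormalCone}; no first-order critical direction is then required, placing us in case (a) of the first branch. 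If $\delta\vb=0$, Lemma \ref{LemOptCond} forces $\Lsp(\KbG)=\{0\}$ along with \eqref{EqAuxOptCondSuppl}, supplying $\lb\in\Sigma(0,\delta\vba)$, $w\in\K_\KbG(0,\delta\vba)$ and $w^\ast+\nabla^2(\lb^Tg)(\yb)w\in\K_\KbG(0,\delta\vba)^\circ$; setting $\vb:=0$, $\zba:=\delta\vba$, $\eta:=0$, $\F^v_1:=\K_\KbG(0,\zba)$, $\F^v_2:=\{0\}$ and $\F^\lambda_1=\F^\lambda_2:=T_{\Lb(0)}(\lb)$ realises case (b) of the first branch.

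The main obstacle is the simultaneous face extraction in Case A. I must verify that $\F^v_1,\F^v_2\subseteq\K_\KbG(\vb,\zba)$ and $\F^\lambda_1,\F^\lambda_2\subseteq T_{\Lb(\vb)}(\lb)$ can be chosen so that $\tilde K(\vb,\vba,\delta\vb,\delta\vba)$ decomposes exactly as the product $(\F^v_1-\F^v_2)\times(\F^\lambda_1-\F^\lambda_2)$ in the face-difference sense of Proposition \ref{PropBasicPropCone}, with $\delta\vb\in\ri\F^v_2$ and the Hessian conditions in \eqref{EqOptCond_d} holding, so that the polar inclusions \eqref{EqOptCond_b}--\eqref{EqOptCond_c} fall out of the single inclusion in $\tilde K^\circ\times\tilde K$. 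This amounts to a careful simultaneous application of the index-set calculus of Proposition \ref{PropBasicPropCone} to $\KbG$ and to $\Lb$, exploiting that the $\K_\Lb$-component inside $\tilde K(\vb,\vba)$ coincides with $T_{\Lb(\vb)}(\lb)$ via \eqref{criticalcone}.
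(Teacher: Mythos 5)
Your proposal follows the paper's own proof essentially verbatim: the same case split on $\delta y$ and then on $\delta\vb$ coming out of Lemma \ref{LemOptCond}, the same use of Lemma \ref{LemUnique_lambda} to fix $(\lb,\zba)$, the same appeal to the regular-normal-cone formulas \eqref{EqRegNormalConeTanConeGrNormalCone}, \eqref{EqRegNormalConeTanConeGrNormalConeZero} and \eqref{EqRegNormalConeTanConeTanConeGrNormalCone}, and the same face extraction via the product decomposition of $\tilde K(\vb,\vba,\delta\vb,\delta\vba)$ and Proposition \ref{PropBasicPropCone}. One small slip worth fixing: in the subcase $\delta y=0$, $\delta\vb=0$ you set $\F^\lambda_1=\F^\lambda_2=T_{\Lb(0)}(\lb)$, which gives $\F^\lambda_1-\F^\lambda_2=\big(T_{\Lb(0)}(\lb)\big)^+\neq T_{\Lb(0)}(\lb)$ unless $\lb\in\ri\Lb$ (impossible for an extreme point when $\Lb$ is not a singleton) and would therefore push you into the second branch of the dichotomy, which demands $\vb\neq0$; choose instead $\F^\lambda_2=\{0\}$ and $\F^\lambda_1=T_{\Lb(\vb)}(\lb)$ as the paper does, so that $\F^\lambda_1-\F^\lambda_2=T_{\Lb(\vb)}(\lb)$ and case (b) of the first branch applies.
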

\begin{proof}
  Consider $\delta x,\delta y, \delta\vb,\delta\vba, w^\ast, w, \delta a$ and $\sigma$ as in Lemma \ref{LemOptCond}. Then \eqref{EqOptCond_a} holds and
    \eqref{EqOptCond_e} follows from the observation that
  \[\sigma\in  N_{T_{T_{\R^m_-}(G(\xb,\yb))}\left(\nabla G(\xb,\yb)(\delta x,\delta y)\right)}(\delta a)\subseteq N_{T_{\R^m_-}(G(\xb,\yb))}\left(\nabla G(\xb,\yb)(\delta x,\delta y)\right)
  \subseteq N_{\R^m_-}(G(\xb,\yb)).\]

{\bf Case I: $\delta y=0$.} Then we also have $\delta x=0$ by \eqref{EqAuxOptCond_d} and thus
\[(\delta \vb,\delta \vba)\in T_{T_{\Gr \widehat N_\Gamma}(\yb,\yba)}\big(0,0\big)=T_{\Gr \widehat N_\Gamma}(\yb,\yba),\ (w^*,w)\in \widehat N_{T_{\Gr \widehat N_\Gamma}(\yb,\yba)}(\delta \vb,\delta \vba).\]

{\bf Subcase Ia:  $\delta \vb\not=0$.} Set $\vb =\delta \vb$  and by Lemma \ref{LemUnique_lambda} there are unique elements $\lb\in \Lb(\vb)$ and $\zba\in N_{\KbG}(\vb)$ such that $\delta\vba=\nabla^2(\lb^Tg)(\yb)\vb+\zba$. Since $$\tilde K(\vb,\delta\vba)=\K_{\KbG\times\Lb}(\vb,\lb,\zba,\vb^T\nabla^2g(\yb)\vb)=\K_{\KbG}(\vb,\zba)\times \K_{\Lb}(\lb,\vb^T\nabla^2g(\yb)\vb)$$ and $\K_{\Lb}(\lb,\vb^T\nabla^2g(\yb)\vb)=T_{\Lb(\vb)}(\lb)$, by (\ref{EqRegNormalConeTanConeGrNormalCone}) there is some $\eta\in T_{\Lb(\vb)}(\lb)$ such that
\begin{subequations}
\begin{align}
\label{EqOptCondAux_a}&w^*+\nabla^2(\lb^Tg)(\yb)w-2\nabla^2(\eta^T g)(\yb)\vb\in \K_{\KbG}(\vb,\zba)^\circ,\\
&w\in \K_{\KbG}(\vb,\zba),\\
\label{EqOptCondAux_c}&\vb^T\nabla^2g(\yb)w\in T_{\Lb(\vb)}(\lb)^\circ.
\end{align}
\end{subequations}
Set $\delta v=0$, $\F^v_2=\{0\}$, $\F^v_1= \K_{\KbG}(\vb,\zba)$, $\F^\lambda_2=\{0\}$, $\F^\lambda_1=T_{\Lb(\vb)}(\lb)$ implying $w\in \F^v_1-\F^v_2=\K_{\KbG}(\vb,\zba)$ and $\eta\in\F^\lambda_1-\F^\lambda_2=T_{\Lb(\vb)}(\lb)$. Then \eqref{EqOptCond_c} follows from \eqref{EqOptCondAux_c},  \eqref{EqOptCond_d} is fulfilled and \eqref{EqOptCond_b} follows from \eqref{EqAuxOptCond_c} and \eqref{EqOptCondAux_a}.

{\bf Subcase Ib:  $\delta \vb=0$.} By Lemma \ref{LemOptCond} the case $\delta\vb=0$ is only possible when $\Lsp(\KbG)=\{0\}$ and in this case there is some $\lb\in \Sigma(0,\delta \vba)$ such that \eqref{EqAuxOptCondSuppl} holds. It follows that the conditions of the theorem are fulfilled with $\vb=0$, $\zba=\delta\vba$, $\eta=0$, $\delta v=0$, $\F^v_2=\{0\}$, $\F^v_1= \K_{\KbG}(\vb,\zba)$, $\F^\lambda_2=\{0\}$, $\F^\lambda_1=T_{\Lb(\vb)}(\lb)$.

{\bf Case II: $\delta y\not=0$.} In this case set $\vb:=\delta y$, $\delta v:=\delta\vb$. Then $\big(\vb,-\nabla \phi(\xb,\yb)(\delta x,\vb)\big)\in T_{\Gr \widehat N_\Gamma}(\yb,\yba)$ and by Lemma \ref{LemUnique_lambda} there are unique $\lb\in \Lb(\vb)$ and $\zba\in N_{\KbG}(\vb)$ such that
  \[-\nabla \phi(\xb,\yb)(\delta x,\vb)=\nabla^2(\lb^Tg)(\yb)\vb+\zba.\]
In view of \eqref{EqBarMu1} and \eqref{EqBarMu2}, there are unique $\bar \mu\in T_{\Lb(\vb)}(\lb)$ and $\bar\zeta^*\in N_{\K_{\KbG}(\vb,\zba)}(\delta v)$ such that
  \begin{eqnarray*}
  && \delta\vba= \nabla^2(\lb^Tg)(\yb)\delta v +\nabla^2(\bar\mu^Tg)(\yb)\vb+\bar\zeta^*,\\
  &&  (\delta v, \bar\mu,\bar\zeta^*, 2\vb^T\nabla^2 g(\yb)\delta v)\in\Gr N_{\tilde K(\vb, -\nabla \phi(\xb,\yb)(\delta x,\vb))}.\end{eqnarray*}
  Further, by \eqref{EqRegNormalConeTanConeTanConeGrNormalCone} there is some $\eta$ such that
  \begin{align*}\big(w^\ast+&\nabla^2(\lb^Tg)(\yb)w- 2\nabla^2(\eta^Tg)(\yb)\vb, \vb^T\nabla^2 g(\yb)w, w, \eta\big)\\
\nonumber  & \in \tilde K(\delta y,-\nabla \phi(\xb,\yb)(\delta x,\vb),\delta v ,\delta\vba)^\circ\times \tilde K(\vb,-\nabla \phi(\xb,\yb)(\delta x,\vb),\delta v, \delta\vba).\end{align*}
By taking into account
\begin{align*}\tilde K(\vb,-\nabla \phi(\xb,\yb)(\delta x,\vb),\delta v,\delta\vba)&=\K_{\K_{\KbG}(\vb,\zba)\times T_{\Lb(\vb)}(\lb)}(\delta v,\bar \mu, \bar\zeta^*, 2\vb^T\nabla^2g(\yb)\delta v)\\
&=\K_{\K_{\KbG}(\vb,\zba)}(\delta v,\bar\zeta^*)\times \K_{ T_{\Lb(\vb)}(\lb)}(\bar \mu, 2\vb^T\nabla^2g(\yb)\delta v),\end{align*}
we obtain $\delta v\in \K_{\KbG}(\vb,\zba)$, $\bar \zeta^*\in N_{\K_{\KbG}(\vb,\zba)}(\delta v)$,
\begin{align*}(w^\ast+\nabla^2(\lb^Tg)(\yb)w- 2\nabla^2(\eta^Tg)(\yb)\vb, w)\in \K_{\K_{\KbG}(\vb,\zba)}(\delta v,\bar\zeta^*)^\circ\times \K_{\K_{\KbG}(\vb,\zba)}(\delta v,\bar\zeta^*),\nonumber \\
\nonumber(\vb^T\nabla^2 g(\yb)w,\eta)\in \K_{ T_{\Lb(\vb)}(\lb)}(\bar \mu, 2\vb\nabla^2g(\yb)\delta v)^\circ \times \K_{ T_{\Lb(\vb)}(\lb)}(\bar \mu, 2\vb^T\nabla^2g(\yb)\delta v).
\end{align*}
By defining $\F^v_1:=\K_{\KbG}(\vb,\zba)\cap [\bar\zeta^*]^\perp$, $\F^\lambda_1:=T_{\Lb(\vb)}(\lb)\cap [\vb^T\nabla^2 g(\yb)\delta v]^\perp$ and choosing $\F^v_2\subset\F^v_1$ and $\F^\lambda_2\subset \F^\lambda_1$ as those faces fulfilling $\delta v\in\ri \F^v_2$, $\bar\mu\in\ri\F^\lambda_2$ we obtain  $\K_{\K_{\KbG}(\vb,\zba)}(\delta v,\bar\zeta^*)= \F^v_1-\F^v_2$ and $\K_{ T_{\Lb(\vb)}(\lb)}(\bar \mu, 2\vb^T\nabla^2g(\yb)\delta v)=\F^\lambda_1-\F^\lambda_2$. Hence \eqref{EqOptCond} follows.
Since we have $\vb\not=0$, the claimed properties follow when $\F^v_1-\F^v_2=\K_{\KbG}(\vb,\zba)$ and $\F^\lambda_1-\F^\lambda_2=T_{\Lb(\vb)}(\lb)$. Otherwise, if $\F^v_1-\F^v_2 \not=\K_{\KbG}(\vb,\zba)$ and $\F^\lambda_1-\F^\lambda_2\not=T_{\Lb(\vb)}(\lb)$ the claimed properties follow as well.
\end{proof}

In the following remark we summarize some comments on the optimality conditions of Theorem \ref{ThOptCond}.
\begin{remark}\label{remark4}
\begin{enumerate}
\item If $\Lb(\vb)=\{\bar{\lambda}\}$ is a singleton then we have $\Lb(v)=\Lb(\vb)$ for all $v\in {\KbG}$ sufficiently close to $\vb$. Indeed,  by \cite[Lemma 3]{GfrOut16b} we have $\Lb(v)\subseteq\Lb(\vb)$ for every
$v\in {\KbG}$  sufficiently close to $\vb$ and $\Lb(v)\not=\emptyset$ for any $v\in {\KbG}$  by \cite[Proposition 4.3(iii)]{GfrMo15a}. As a consequence it follows from \cite[Proposition 3]{Gfr18} that $T_{\Gr\widehat N_\Gamma}(\yb,\yba)$  is locally polyhedral near $(\vb,\vba)$ for every $\vba$ satisfying $(\vb,\vba)\in T_{\Gr \widehat N_\Gamma}(\yb,\yba)$.  Thus by Theorem \ref{ThOptCond}  we must have  $\F_1^v-\F_2^v=\K_\KbG(\vb,\zba)$ and $\F_1^\lambda-\F_2^\lambda=T_{\Lb(\vb)}(\lb)=\{0\}$. Hence we have $\F_1^v=\K_{\KbG}(\vb,\zba)$, $\F_2^v=\Lsp(\K_{\KbG}(\vb,\zba))$, $\F_1^\lambda=\F_2^\lambda=\{0\}$, $\eta=0$ and hence   $\delta v\in \Lsp(\K_{\KbG}(\vb,\zba))$.
\if{\item
  If the set $T_{\Gr\widehat N_\Gamma}(\yb,\yba)$  is not locally polyhedral near $(\vb,-\nabla \phi(\xb,\yb)(\delta x,\vb))$, then by virtue of Proposition \ref{PropPolyhedr}, for every neighborhood $V$ of $\vb$ there is some $v\in V\cap \KbG$ such that $\Lb(v)\not=\Lb(\vb)$. In particular, $\Lb(\vb)$ cannot be a singleton because by \cite[Lemma 3]{GfrOut16b} we have $\Lb(v)\subseteq\Lb(\vb)$ for every $v$ sufficiently close to $\vb$.
\item If $\Lb(\vb)$ is a singleton, then $T_{\Lb(\vb)}(\lb)=\{0\}$ and consequently $\F_1^\lambda=\F_2^\lambda=\{0\}$ and $\eta=0$. Further we have $\F_1^v-\F_2^v=\KbG(\vb,\zba)$ and we can choose $\F_1^v=\K_{\KbG}(\vb,\zba)$, $\F_2^v=\{0\}$, $\delta v=0$.}\fi
\item If $\KbG$ is a subspace then for every $\vb\in\KbG$, $\zba\in N_{\KbG}(\vb)$ there holds $\K_{\KbG}(\vb,\zba)=\KbG$.

\item If $\K_{\KbG}(\vb,\zba)$ is a subspace then the only face of $\K_{\KbG}(\vb,\zba)$ is $\K_{\KbG}(\vb,\zba)$ itself and therefore $\F_1^v=\F_2^v=\F_1^v-\F_2^v=\K_{\KbG}(\vb,\zba)$. Similarly, if $\lb\in\ri \Lb(\vb)$ then $T_{\Lb(\vb)}(\lb)$ is a subspace and $\F_1^\lambda=\F_2^\lambda=\F_1^\lambda-\F_2^\lambda=T_{\Lb(\vb)}(\lb)$.
\end{enumerate}
\end{remark}
  \begin{example}[{cf. \cite[Examples 1,2]{GfrYe16a}}]\label{Ex1}
Consider the  MPEC
\begin{eqnarray*}
\min_{x,y} && F(x,y):=x_1-\frac{3}{2} y_1 + x_2-\frac 32 y_2- y_3 \nonumber \\
s.t. && 0\in \phi(x,y)+N_\Gamma( y), \\
&& G_1(x,y)=G_1(x):=-x_1-2x_2\leq 0,\nonumber \\
&& G_2(x,y)=G_2(x):=-2x_1-x_2\leq 0,\nonumber
\end{eqnarray*}
where
$$\phi(x,y):=\left(\begin{array}{c}
  y_1-x_1\\
  y_2-x_2\\
  -1\end{array}\right), \quad \Gamma:=\left\{y\in \R^3|
g_1(y):=y_3+\frac12 y_1^2\leq 0,\;g_2(y):=y_3+\frac12 y_2^2\leq 0  \right\}.$$
As it was demonstrated in \cite{GfrYe16a}, $\xb=(0,0)$ and  $\yb=(0,0,0)$ is the unique global solution and Assumption \ref{Ass1} is fulfilled. Straightforward calculations yield
\[\Lb=\{\lambda\in\R^2_+\mv \lambda_1+\lambda_2=1\},\ \KbG=\R^2\times\{0\}.\]
For every $v\in\R^3$, $\lambda\in\R^2$ we have $v^T\nabla^2(\lambda^Tg)(\yb)v=\lambda_1v_1^2+\lambda_2v_2^2$ yielding
\[\Lb(v)=\begin{cases}
  \{(1,0)\}&\mbox{if $\vert v_1\vert > \vert v_2\vert $,}\\
  \Lb&\mbox{if $\vert v_1\vert = \vert v_2\vert $,}\\
  \{(0,1)\}&\mbox{if $\vert v_1\vert < \vert v_2\vert $.}
\end{cases}\]
We now show that the mapping $g$ is 2-nondegenerate in every direction $0\not=v\in\KbG$, i.e. we have to verify
\begin{equation}\label{EqEx12NonDeg}\left(\begin{array}{c}\mu_1v_1\\\mu_2v_2\\0\end{array}\right)\in \big(N_{\KbG}(v))^+=\big(\{0\}\times\{0\}\times\R\big)^+=\{0\}\times\{0\}\times\R,\ \mu\in\big(\Lb(v)\big)^+\ \Rightarrow\ \mu=(0,0)\end{equation}
for every $0\not=v\in \KbG=\R^2\times\{0\}$.
If $\Lb(v)$ is a singleton this holds obviously true because then $\big(\Lb(v)\big)^+=\{0\}$. But the only case when $\Lb(v)$ is not a singleton is when $\vert v_1\vert=\vert v_2\vert$, which together with $v\not=0$ implies $\vert v_1\vert=\vert v_2\vert>0$ and we see that \eqref{EqEx12NonDeg} holds in this case as well.
We claim that the optimality conditions of Theorem \ref{ThOptCond} hold with $\vb =(1,1,0)$, $\zba=(0,0,0)$,  $\lb=(\frac 12,\frac 12)$, $\eta=(0,0)$, $\delta v=(0,0,0)$, $\F_1^v=\F_2^v =\KbG$, $w=-(1,1,0)$, $\F_1^\lambda=\F_2^\lambda=T_{\Lb(\vb)}(\lb)=\{(\eta_1,\eta_2)\mv \eta_1+\eta_2=0\}$ and $\sigma=(0,0)$. Indeed, we obviously have $\vb\in \KbG$, $\zba\in N_{\KbG}(\vb)$ and, since $\KbG$ is a subspace and $\lb\in\ri \Lb(\vb)$,  by Remark \ref{remark4}, $\K_{\KbG}(\vb, \zba)=\KbG$, $\F_1^v,\F_2^v$ are faces of $\K_{\KbG}(\vb, \zba)$,  $\F_1^\lambda,\F_2^\lambda$ are faces of $T_{\Lb(\vb)}(\lb)$ and $\delta v\in \ri \F_2^v=\KbG$,  $w\in \F^v_1-\F^v_2=\KbG$, $\eta\in \F^\lambda_1-\F^\lambda_2=T_{\Lb(\vb)}(\lb)$ and  $\sigma\geq 0$.
Conditions \eqref{EqOptCond_a}, \eqref{EqOptCond_b}, \eqref{EqOptCond_c} amount to
\begin{gather*}\left(\begin{array}{c}1\\1\end{array}\right)+\left(\begin{array}{c}w_1\\w_2\end{array}\right)-
\left(\begin{array}{c}\sigma_1+2\sigma_2\\2\sigma_1+\sigma_2\end{array}\right)
=\left(\begin{array}{c}0\\0\end{array}\right)\\
\left(\begin{array}{c}\frac 32\\\frac 32\\1\end{array}\right)+
\left(\begin{array}{c}w_1\\w_2 \\0\end{array}\right)+
\left(\begin{array}{c}\lb_1w_1\\\lb_2w_2 \\0\end{array}\right)-
2\left(\begin{array}{c}\eta_1\vb_1\\\eta_2\vb_2\\0\end{array}\right)\in (\F_1^v-\F_2^v)^\circ=\KbG^\perp=\{0\}\times\{0\}\times\R\\
\left(\begin{array}{c}\vb_1w_1\\\vb_2w_2 \end{array}\right)\in (F_1^\lambda-\F_2^\lambda)^\circ=T_{\Lb(\vb)}(\lb)^\perp=\R\left(\begin{array}{c}1\\1\end{array}\right)
\end{gather*}
and it is easy to see that they are fulfilled. Further, \eqref{EqOptCond_d} holds because of $\delta v=0$ and $\F_2^v$ is a subspace, and \eqref{EqOptCond_e} is fulfilled as well. Finally, we have $\F_1^v-\F^v_2=\K_{\KbG}(\vb,\zba)$, $\F_1^\lambda-\F_2^\lambda=T_{\Lb(\vb)}(\lb)$ and $\vb\not=0$. Thus the optimality conditions of Theorem \ref{ThOptCond} are fulfilled.
\end{example}
At the end of this section we want to formulate the necessary optimality conditions  in Theorem \ref{ThOptCond} in terms of index sets instead of faces. {Recall the definitions of $\bar I$, $\bar I(v)$, $\bar J^+(\lambda)$, $\bar J^+(\Xi)$ given in Subsection \ref{SubSecTwoNondegen}.}
\begin{theorem}\label{ThOptCondIndexSets} Assume that $(\xb,\yb)$ is a local minimizer for  problem \eqref{EqMPEC} fulfilling Assumption \ref{Ass1}. Further assume that $g$ is 2-nondegenerate in every nonzero critical  direction $0\not=v\in\KbG$ at $(\yb,\yba)$, where $\yba:=-\phi(\xb,\yb)$.
%and assume that the assumptions of Theorem \ref{ThOptCond} are fulfilled.
Then there are a critical direction $\vb\in\KbG$, a multiplier $\lb\in\Lb(\vb)$, index sets $\J^+$, $\J$, $\I^+$, and $\I$ with $\bar J^+(\lb)\subseteq \J^+\subseteq\J\subseteq \bar J^+(\Lb(\vb))\subseteq \bar J^+(\Lb)\subseteq \I^+\subseteq \I\subseteq \bar I(\vb)$ and elements $w\in\R^m$, $\eta,\xi\in\R^q$ and $\sigma\in \R^p_+$ such that
  \begin{subequations}
  \begin{align}
  \label{EqOptCondIndex_a}&0=\nabla_x F(\xb,\yb)-\nabla_x\phi(\xb,\yb)^Tw+\nabla_xG(\xb,\yb)^T\sigma,\\
  \label{EqOptCondIndex_b}&0=\nabla_y F(\xb,\yb)-\nabla_y\phi(\xb,\yb)^Tw +\nabla_yG(\xb,\yb)^T\sigma-\nabla^2(\lb^Tg)(\yb)w+\nabla g(\yb)^T\xi+2\nabla ^2(\eta^Tg)(\yb)\vb,\\
  \label{EqOptCondIndex_c}& \xi_i=0 \mbox{ if } i\not\in \I,\\
   \label{EqOptCondIndex_d}& \xi_i\geq 0, \nabla g_i(\yb)^T w\leq 0 \mbox{ if } i\in\I\setminus\I^+,\\
    \label{EqOptCondIndex_e}& \nabla g_i(\yb)^T w=0 \mbox{ if } i\in \I^+,\\
   \label{EqOptCondIndex_f}& \nabla g(\yb)^T\eta=0,\ \eta_i=0,i\not\in \J,\ \eta_i\geq 0, i\in \J\setminus \J^+,\\
  \label{EqOptCondIndex_g}&0=\sigma_iG_i(\xb,\yb),\ i=1,\ldots,p.
  \end{align}
Moreover, there are $\delta v\in\R^m$, $s_{\delta v},s_w\in\R^m$ and $\bar\mu\in\R^q$
such that
  \begin{align}
      \label{EqOptCondIndex_h}&\nabla g_i(\yb)^T \delta v=0,\ i\in \bar J^+(\Lb),\ \nabla g_i(\yb)^T \delta v\leq 0,\ i\in \bar I(\vb)\setminus \bar J^+(\Lb),\\
      \label{EqOptCondIndex_i}& \I=\{i\in \bar I(\vb)\mv \nabla g_i(\yb)^T \delta v= 0\},\\
      \label{EqOptCondIndex_j}&\nabla g_i(\yb)^T s_{\delta v}+\vb^T\nabla^2 g_i(\yb)\delta v=0,i\in \bar J^+(\lb),\ \nabla g_i(\yb)s_{\delta v}+\vb^T\nabla^2 g_i(\yb)^T \delta v\leq 0,i\in \bar J^+(\Lb(\vb))\setminus \bar J^+(\lb)\\
      \label{EqOptCondIndex_k}&\J=\{i\in \bar J^+(\Lb(\vb)))\mv \nabla g_i(\yb)^T s_{\delta v}+\vb^T\nabla^2 g_i(\yb)\delta v= 0\}\\
      \label{EqOptCondIndex_l}&\nabla g(\yb)^T\bar\mu=0,\ \bar\mu_i=0,i\not\in \J, \bar \mu_i\geq 0,i\in \J\setminus \bar J^+(\lb)\\
      \label{EqOptCondIndex_m}&\J^+=\bar J^+(\lb)\cup\{i\in \J\setminus \bar J^+(\lb)\mv \bar\mu_i>0\}\\
      \label{EqOptCondIndex_n}&\nabla g_i(\yb)^T s_w+\vb^T\nabla^2 g_i(\yb)w=0, i\in\J^+,\ \nabla g_i(\yb)^T s_w+\vb^T\nabla^2 g_i(\yb)w\leq 0, i\in\J\setminus\J^+.
  \end{align}
Furthermore, if $\I=\bar I(\vb)$,  $\J^+=\bar J^+(\lb)$ and  $\J=\bar J^+(\Lb(\vb))$ then one of the following two cases must occur: case (a)  $\vb\not=0$;
case (b) $\vb=0$, $\Lsp(\KbG)=\{0\}$ and $\lb\in\Sigma(0,\zba)$ for some $\zba=\sum_{i\in \I}\nabla g_i(\yb)\alpha_i$ with $\alpha_i>0$, $i\in \I^+\setminus \bar J^+(\Lb)$.

 Otherwise, if either $\I\not=\bar I(\vb)$ or $\J^+\not=\bar J^+(\lb)$ or $\J\not=\bar J^+(\Lb(\vb))$ then $\vb\not=0$ and there are some $\delta x\in\R^n$ and some $\zba=\sum_{i\in \I}\nabla g_i(\yb)\alpha_i$ with $\alpha_i>0$, $i\in \I^+\setminus \bar J^+(\Lb)$ such that conditions \eqref{EqOptCondIIa}-\eqref{EqOptCondIIc} hold and
  $T_{\Gr \widehat N_\Gamma}(\yb,\yba)$ is not locally polyhedral near $(\vb,-\nabla \phi(\xb,\yb)(\delta x,\vb))$.
\end{subequations}
\end{theorem}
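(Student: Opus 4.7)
The plan is to translate Theorem \ref{ThOptCond}, whose conditions are phrased via faces $\F^v_1, \F^v_2$ of $\K_\KbG(\vb,\zba)$ and $\F^\lambda_1, \F^\lambda_2$ of $T_{\Lb(\vb)}(\lb)$, into the index-set form of the present theorem by systematic application of Proposition \ref{PropBasicPropCone}. The first step is to invoke Theorem \ref{ThOptCond} to extract $\vb, \zba, \lb, \delta v, w, \eta, \sigma$ together with the four faces satisfying \eqref{EqOptCond_a}--\eqref{EqOptCond_e}, noting that \eqref{EqOptCond_a} is already \eqref{EqOptCondIndex_a} and that \eqref{EqOptCond_e} together with $\sigma \in \R^p_+$ yields \eqref{EqOptCondIndex_g}.

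Next, I identify the four index sets. On the $v$-side, Proposition \ref{PropBasicPropCone} applied to $\KbG$ (whose forced equalities are indexed by $\bar J^+(\Lb)$ and whose inequalities active at $\vb$ are $\bar I(\vb) \setminus \bar J^+(\Lb)$) produces $\I^+$ with $\bar J^+(\Lb) \subseteq \I^+ \subseteq \bar I(\vb)$ describing the equalities of $\K_\KbG(\vb,\zba)$, and a further set $\I$ with $\I^+ \subseteq \I \subseteq \bar I(\vb)$ recording the active set of $\delta v$ inside $\K_\KbG(\vb,\zba)$; the conditions $\delta v \in \ri \F^v_2$ and $w \in \F^v_1 - \F^v_2$ then translate directly into \eqref{EqOptCondIndex_h}, \eqref{EqOptCondIndex_i}, and the $w$-parts of \eqref{EqOptCondIndex_d}--\eqref{EqOptCondIndex_e}. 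Analogously on the $\lambda$-side, Proposition \ref{PropBasicPropCone} applied to $\Lb$ yields $\J^+, \J$ with $\bar J^+(\lb) \subseteq \J^+ \subseteq \J \subseteq \bar J^+(\Lb(\vb))$, where the final inclusion arises from the observation that any direction in $T_{\Lb(\vb)}(\lb)$ with a strictly positive $i$-th coordinate witnesses $i \in \bar J^+(\lambda')$ for some $\lambda' \in \Lb(\vb)$ close to $\lb$.

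I then invoke Farkas' lemma to convert the polar-cone memberships in \eqref{EqOptCond_b}--\eqref{EqOptCond_d} into multiplier identities. The polar of $\F^v_1 - \F^v_2$, expanded via Proposition \ref{PropBasicPropCone}, produces \eqref{EqOptCondIndex_b} with $\xi$ satisfying \eqref{EqOptCondIndex_c}--\eqref{EqOptCondIndex_d}. The memberships $\eta \in \F^\lambda_1 - \F^\lambda_2$ and $\bar\mu \in \ri \F^\lambda_2$ unwind directly into \eqref{EqOptCondIndex_f} and \eqref{EqOptCondIndex_l}--\eqref{EqOptCondIndex_m}. The auxiliary vectors $s_w$ and $s_{\delta v}$ arise as Farkas multipliers for the linear equality $\nabla g(\yb)^T \mu = 0$ in the polyhedral representation of the $\lambda$-side critical cones: certifying $\vb^T\nabla^2 g(\yb) w \in (\F^\lambda_1 - \F^\lambda_2)^\circ$ and $\vb^T\nabla^2 g(\yb) \delta v \in T_{\Lb(\vb)}(\lb)^\circ$ gives respectively \eqref{EqOptCondIndex_n} and \eqref{EqOptCondIndex_j}--\eqref{EqOptCondIndex_k}, the equality case in the latter being driven by the characterization $\F^\lambda_1 = T_{\Lb(\vb)}(\lb) \cap [\vb^T\nabla^2 g(\yb) \delta v]^\perp$ from \eqref{EqOptCond_d}.

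Finally, the case distinction mirrors exactly that of Theorem \ref{ThOptCond}: the condition $\F^v_1 - \F^v_2 = \K_\KbG(\vb,\zba)$ corresponds to $\I = \bar I(\vb)$, and $\F^\lambda_1 - \F^\lambda_2 = T_{\Lb(\vb)}(\lb)$ to $\J^+ = \bar J^+(\lb)$, $\J = \bar J^+(\Lb(\vb))$, under which the dichotomy $\vb \neq 0$ versus $(\vb = 0,\ \Lsp(\KbG) = \{0\},\ \lb \in \Sigma(0,\zba))$ transfers verbatim. In the complementary case, the representation $\zba = \sum_{i \in \I} \alpha_i \nabla g_i(\yb)$ with $\alpha_i > 0$ for $i \in \I^+ \setminus \bar J^+(\Lb)$ follows from $\zba \in N_\KbG(\vb)$ and the explicit description of $N_\KbG(\vb)$ supplied by Proposition \ref{PropBasicPropCone}, the strict positivity reflecting that $\zba$ lies in the relative interior of the face of $N_\KbG(\vb)$ singled out by $\I^+$. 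The principal technical hurdle is careful bookkeeping of the two nested index chains against the nested face structure and, in particular, identifying $s_{\delta v}, s_w$ consistently as Farkas duals for the linear equalities appearing in the polyhedral descriptions on the $\lambda$-side.
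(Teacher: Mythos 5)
Your proposal is correct and follows essentially the same route as the paper: both take the data and faces produced by Theorem \ref{ThOptCond}, use Proposition \ref{PropBasicPropCone} to encode $\F^v_1-\F^v_2$ and $\F^\lambda_1-\F^\lambda_2$ by the index sets $\I,\I^+,\J,\J^+$, rewrite the polar-cone memberships in \eqref{EqOptCond_b}--\eqref{EqOptCond_d} via explicit Farkas-type representations (yielding $\xi$, $s_{\delta v}$, $s_w$), and transfer the case distinction and the representation of $\zba\in N_{\KbG}(\vb)$ verbatim. The paper's own proof is just a terser version of exactly this bookkeeping.
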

\begin{proof}
  Let $\vb,\lb,\delta v,\eta,w,\sigma, \F_1^v,\F_2^v,\F_1^\lambda,\F_2^\lambda$ as in Theorem \ref{ThOptCond}. The index sets $\I,\I^+, \J,\J^+$ were chosen such that
  \begin{align*}&\F_1^v-\F_2^v=\{s\mv \nabla g_i(\yb)^T s=0,i\in \I^+,\ \nabla g_i(\yb)^T s\leq0,i\in\I\setminus\I^+\},\\
      &F_1^\lambda-\F_2^\lambda=\{\mu\mv\nabla g(\yb)^T\mu=0,\ \mu_i=0,i\not\in \J,\ \mu_i\geq 0,i\in \J\setminus\J^+ \}.
  \end{align*}
  Then $(\F_1^v-\F_2^v)^\circ=\{\nabla g(\yb)^T\xi\mv \xi_i=0,i\not\in\I,\ \xi_i\geq 0,i\in\I\setminus\I^+\}$ and conditions \eqref{EqOptCondIndex_a}-\eqref{EqOptCondIndex_g} follow immediately. \eqref{EqOptCondIndex_h} states that $\delta v\in T_{\KbG}(\vb)$ whereas \eqref{EqOptCondIndex_i} results from the requirement $\delta v\in\ri \F_2^v$ together with Proposition \ref{PropBasicPropCone}. The index set $\I^+$ is related with $\zba\in N_{\KbG}(\vb)$. Since we do not have any further condition on $\zba$, the same applies to $\I^+$. \eqref{EqOptCondIndex_j} states that $\vb^T\nabla^2g(\yb)\delta v\in T_{\Lb(\vb)}(\lb)^\circ$ and condition \eqref{EqOptCondIndex_k} results from the second part of \eqref{EqOptCond_d}.
  The point $\bar\mu$  denotes any point in $\ri F_2^\lambda$ yielding the condition \eqref{EqOptCondIndex_m} by Proposition \ref{PropBasicPropCone}. Finally, condition \eqref{EqOptCondIndex_n} is equivalent to \eqref{EqOptCond_c}.
\end{proof}
{The optimality conditions of Theorem \ref{ThOptCondIndexSets} dramatically simplify under the assumption that $\Lb=\{\lb\}$ is a singleton. In this case  the condition that $g$ is 2-nondegenerate in every nonzero critical  direction $0\not=v\in\KbG$ at $(\yb,\yba)$ holds automatically. Further, { by the formula for the tangent cone in Theorem \ref{ThTanConeGrNormalCone}, it is easy to see that in this case the tangent cone} $T_{\Gr \widehat N_\Gamma}(\yb,\yba)$ is polyhedral and therefore by Theorem \ref{ThOptCondIndexSets}, we must have
$$\bar J^+(\bar\lambda)=\J^+=\J =\bar J^+(\Lb)\subseteq \I^+\subseteq \I= \bar I(\vb).$$
Let $w\in\R^m$, $\xi\in\R^q$ and $\sigma\in \R^p_+$ be those found in Theorem \ref{ThOptCondIndexSets} and take, as already pointed out in Remark \ref{remark4}, $\eta=0$.  Moreover, we can take $\delta v=0$, $s_{\delta v}=0$ and $\bar\mu=0$ in order to fulfill conditions \eqref{EqOptCondIndex_h}-\eqref{EqOptCondIndex_m}. Finally, the assumption that $\Lb=\{\bar \lambda\}$ is a singleton implies that the gradients $\nabla g_i(\yb)$, $i\in \bar J^+(\bar\lambda)$ are linearly independent and therefore there always exists an element $s_w$ fulfilling \eqref{EqOptCondIndex_n}. Therefore we have the following corollary.
\begin{corollary}\label{CorUniquemultiplier} Assume that $(\xb,\yb)$ is a local minimizer for problem \eqref{EqMPEC} fulfilling Assumption \ref{Ass1}. Further assume that the multiplier set $\Lb=\{\bar \lambda\}$ is a singleton.
Then there are a critical direction $\vb\in\KbG$,  index set $\I^+$ with $\bar J^+(\lb)\subseteq \I^+\subseteq  \bar I(\vb)$ and elements $w\in\R^m$, $\xi\in\R^q$ and $\sigma\in \R^p_+$ such that
  \begin{subequations}
  \label{newEqOptCondIndex}
  \begin{align}
  \label{newEqOptCondIndex_a}&0=\nabla_x F(\xb,\yb)-\nabla_x\phi(\xb,\yb)^Tw+\nabla_xG(\xb,\yb)^T\sigma,\\
  \label{newEqOptCondIndex_b}&0=\nabla_y F(\xb,\yb)-\nabla_y\phi(\xb,\yb)^Tw +\nabla_yG(\xb,\yb)^T\sigma-\nabla^2(\lb^Tg)(\yb)w+\nabla g(\yb)^T\xi,\\
  \label{newEqOptCondIndex_c}& \xi_i=0 \mbox{ if } i\not\in  \bar I(\vb),\\
   \label{newEqOptCondIndex_d}& \xi_i\geq 0, \nabla g_i(\yb)^T w\leq 0 \mbox{ if } i\in \bar I(\vb)\setminus\I^+,\\
    \label{newEqOptCondIndex_e}& \nabla g_i(\yb)^T w=0 \mbox{ if } i\in \I^+,\\
  \label{newEqOptCondIndex_g}&0=\sigma_iG_i(\xb,\yb),\ i=1,\ldots,p.
  \end{align}
\end{subequations}
\end{corollary}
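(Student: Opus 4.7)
The plan is to invoke Theorem \ref{ThOptCondIndexSets} directly and exploit the singleton assumption on $\Lb$ to collapse the index sets and the auxiliary multipliers to their simplest form. First I would verify the hypotheses of that theorem: Assumption \ref{Ass1} is given, and the 2-nondegeneracy condition holds automatically because, if $\Lb=\{\bar\lambda\}$, then for every $v\in\KbG$ we have $\Lb(v)\subseteq\Lb=\{\bar\lambda\}$, whence $\big(\Lb(v)\big)^+=\{0\}$ and 2-nondegeneracy in direction $v$ at $(\yb,\yba)$ is vacuous.

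Next I would appeal to Remark \ref{remark4}(1), which tells us that $T_{\Gr \widehat N_\Gamma}(\yb,\yba)$ is locally polyhedral near every point of its graph whenever $\Lb(\vb)$ is a singleton (automatic here, since $\Lb$ itself is). Consequently the ``otherwise'' branch of Theorem \ref{ThOptCondIndexSets}, which explicitly requires that this tangent cone fail to be locally polyhedral near $(\vb,-\nabla \phi(\xb,\yb)(\delta x,\vb))$, cannot arise. So we fall into the first case of the dichotomy, forcing
\[\I=\bar I(\vb),\qquad \J^+=\bar J^+(\lb),\qquad \J=\bar J^+(\Lb(\vb)).\]
Since $\Lb(\vb)=\{\bar\lambda\}$ we have $\bar J^+(\Lb(\vb))=\bar J^+(\bar\lambda)=\bar J^+(\lb)$, hence $\J^+=\J=\bar J^+(\bar\lambda)$.

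Then I would carry out the simplifying choices. As also observed in Remark \ref{remark4}(1), $T_{\Lb(\vb)}(\lb)=\{0\}$, which in index-set language forces $\eta=0$ and erases the term $2\nabla^2(\eta^Tg)(\yb)\vb$ from \eqref{EqOptCondIndex_b}, leaving exactly \eqref{newEqOptCondIndex_b}; the remaining conditions \eqref{EqOptCondIndex_a}, \eqref{EqOptCondIndex_c}--\eqref{EqOptCondIndex_e}, \eqref{EqOptCondIndex_g} transcribe directly into \eqref{newEqOptCondIndex_a}, \eqref{newEqOptCondIndex_c}--\eqref{newEqOptCondIndex_e}, \eqref{newEqOptCondIndex_g}. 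The auxiliary conditions \eqref{EqOptCondIndex_h}--\eqref{EqOptCondIndex_m} are then satisfied trivially by the choices $\delta v=0$, $s_{\delta v}=0$, $\bar\mu=0$.

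The only nontrivial obligation---and the step I expect to be the main, albeit still routine, obstacle---is producing $s_w$ that satisfies \eqref{EqOptCondIndex_n}. Here I would use that $\bar\lambda$ is automatically an extreme point of the singleton $\Lb$, so the gradients $\{\nabla g_i(\yb) \mv i\in \bar J^+(\bar\lambda)\}$ are linearly independent. Thus the linear system
\[\nabla g_i(\yb)^T s_w = -\vb^T\nabla^2 g_i(\yb) w,\qquad i\in\J^+=\bar J^+(\bar\lambda),\]
is consistent, and any solution discharges \eqref{EqOptCondIndex_n} (with the complementary inequality condition for $i\in\J\setminus\J^+$ being vacuous, as that index set is empty). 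With all supplementary conditions settled, Theorem \ref{ThOptCondIndexSets} then delivers precisely the system \eqref{newEqOptCondIndex}, completing the argument.
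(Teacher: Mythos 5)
Your proposal is correct and follows essentially the same route as the paper: both verify that 2-nondegeneracy is automatic, use the polyhedrality of $T_{\Gr \widehat N_\Gamma}(\yb,\yba)$ (via Remark \ref{remark4}) to exclude the non-polyhedral branch of Theorem \ref{ThOptCondIndexSets} and force $\I=\bar I(\vb)$, $\J^+=\J=\bar J^+(\lb)$, take $\eta=0$, $\delta v=0$, $s_{\delta v}=0$, $\bar\mu=0$, and obtain $s_w$ from the linear independence of $\nabla g_i(\yb)$, $i\in\bar J^+(\lb)$, implied by $\Lb$ being a singleton.
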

}
{We now want to compare our optimality conditions for (MPEC) with the known M-stationarity conditions for (MPCC)  defined as follows.
\begin{definition}[M-stationary condition for (MPCC)]
  \label{DefOptCondMPCC}  Let $(\xb,\yb, \lb)$ be a feasible solution for  problem (MPCC).
%  Assume that a MPCC-GCQ   holds at $(\xb, \yb,
%  \lb)$.
We say that $(\xb,\yb, \lb)$ satisfies the M-stationary condition for (MPCC) if there exist $w\in \R^m, \xi\in \R^q, \sigma\in\R^p_+$ such that
  \begin{subequations}
  \label{EqOptCond-MPCC}
  \begin{align}
  \label{EqOptCond-MPCC_a}&0=\nabla_x F(\xb,\yb)-\nabla_x\phi(\xb,\yb)^Tw+\nabla_xG(\xb,\yb)^T\sigma,\\
  \label{EqOptCond-MPCC_b}&0=\nabla_y F(\xb,\yb)-\nabla_y\phi(\xb,\yb)^Tw+\nabla_yG(\xb,\yb)^T\sigma-\nabla^2(\lb^Tg)(\yb)w+\nabla
   g(\yb)^T\xi,\\
   \label{EqOptCond-MPCC_c}& \xi_i=0 \mbox{ if } g_i(\yb)<0, \lb_i=0,\\
    \label{EqOptCond-MPCC_d}& \nabla g_i(\yb)^T w=0 \mbox{ if } g_i(\yb)=0, \lb_i >0,\\
    \label{EqOptCond-MPCC_f}& \mbox{either } \xi_i>0,   \nabla g_i(\yb)^T w<0 \mbox{ or } \xi_i\nabla g_i(\yb)^T w=0 \mbox{ if } g_i(\yb)=\lb_i=0,\\
  \label{EqOptCond-MPCC_e}&0=\sigma_iG_i(\xb,\yb),\ i=1,\ldots,p.
  \end{align}
  \end{subequations}
\end{definition}
It is well know that  problem (MPCC) may  not be equivalent to  problem (MPEC) in the case when the lower level problem does not have unique multiplier. Moreover, in this case it is also possible that at a locally optimal solution $(\xb,\yb,\lb)$ even the weakest known constraint qualification, the MPCC-GCQ (Guignard constraint qualification), ensuring M-stationarity is not fulfilled. E.g., it was shown in \cite{GfrYe16a} that for  Example \ref{Ex1} MPCC-GCQ does not hold at $(\xb,\yb,\lambda)$ for any $\lambda\in\Lb$.}

{ For the case when the multiplier set $\Lb=\{\bar \lambda\}$ is a singleton, we now  compare our  necessary optimality conditions of Corollary \ref{CorUniquemultiplier} with M-stationarity condition for (MPCC).  By \cite[Proposition 2]{Ad-Hen-Out}, the assumption of MSCQ for (MPEC) is weaker than the corresponding one for (MPCC).}
Suppose that $(\xb,\yb, \bar \lambda )$ satisfies the optimality condition in Corollary \ref{CorUniquemultiplier} and let $w\in\R^m$, $\xi\in\R^q$ and $\sigma\in \R^p_+$ be those found in Corollary \ref{CorUniquemultiplier}. Then (\ref{EqOptCond-MPCC_a})-(\ref{EqOptCond-MPCC_b}) and (\ref{EqOptCond-MPCC_e})  hold.
Since
$$i\not \in  \bar I(\vb) \Longleftrightarrow  \begin{array}{ll}
\mbox{ either } g_i(\bar y)=0, \nabla g_i(\bar y)^T \bar v<0, \bar{\lambda}_i=0\\
\mbox{ or }  g_i(\bar y)<0, \bar{\lambda}_i=0 \end{array}$$
and $\bar J^+(\bar\lambda)=\{ i| g_i(\bar y)=0, \bar \lambda_i>0\}$,
\eqref{newEqOptCondIndex_c} and \eqref{newEqOptCondIndex_e}
implies that   $\xi_i =0$ if $\bar{\lambda}_i=0$ and
$\nabla  g_i(\bar y) ^T w=0 $ if $ \bar \lambda_i >0$. It follows that \eqref{EqOptCond-MPCC_c}-\eqref{EqOptCond-MPCC_f} hold.
Therefore $(\xb,\yb, \bar \lambda )$ must satisfy the M-stationary condition for (MPCC) as well.

It is not difficult to show that the M-stationarity conditions of
Definition \ref{DefOptCondMPCC} imply the necessary optimality conditions of
Corollary \ref{CorUniquemultiplier}
provided the {\em linear independence constraint qualification} (LICQ) holds for the lower level problem at $\bar y$.  Indeed, under LICQ  the multiplier set $\Lb=\{\bar \lambda\}$ is a singleton. Given $w$, $\xi$ and $\sigma$ fulfilling \eqref{EqOptCond-MPCC}, define
$$
\I^+:=\bar J^+(\bar\lambda)\cup\{i\in\bar I\mv \xi_i<0\},\ \I:=\I^+\cup\{i\in\bar I\mv \nabla g_i(\yb)^T w\leq 0,\xi_i>0\}$$
and then find $\vb$ fulfilling
\[\nabla g_i(\yb)^T \vb=0, i\in \I, \ \nabla g_i(\yb)^T \vb=-1,i\in \bar I\setminus\I\]
which exists due to the imposed LICQ. It follows that $\bar{v} \in \KbG$, $\I=\bar I(\vb)$  and that the conditions \eqref{newEqOptCondIndex} are fulfilled. {Hence, under LICQ the optimality conditions of Corollary \ref{CorUniquemultiplier} are equivalent to the M-stationarity conditions of Definition \ref{DefOptCondMPCC}.}
However, the following example demonstrates that the optimality conditions of Corollary \ref{CorUniquemultiplier}
are sharper, when $\Lb$ is a singleton but LICQ fails.
\begin{example}\label{Ex2}
  Consider the problem
  \begin{align*} \min_{x\in\R,y\in\R^2}&x+y_1+y_2\\
   \mbox{subject to}&\ 0\in \left(\begin{array}{c}x+2y_1\\x+y_2\end{array}\right)+\widehat N_\Gamma(y)\ \mbox{where}\ \Gamma:=\{y\in\R^2\mv y_1\leq 0,\ -y_2\leq 0, y_1+y_2\leq 0\}\end{align*}
  at $\xb=0$, $\yb=(0,0)$. Straightforward calculations yield that $\Lb=\{(0,0,0)\}$ and that $(\xb,\yb)$ is not a local minimizer. However, the M-stationary conditions of Definition \ref{DefOptCondMPCC} amount to
  \begin{align*}
    &0=1-(w_1+w_2),\\
    &\left(\begin{array}{c}0\\0\end{array}\right)=\left(\begin{array}{c}1\\1\end{array}\right)-\left(\begin{array}{c}2w_1\\w_2\end{array}\right)
    +\left(\begin{array}{c}\xi_1+\xi_3\\-\xi_2+\xi_3\end{array}\right),\\
    &\big(w_1<0,\ \xi_1>0\big)\vee \big (w_1\xi_1=0 \big ),\\
    &\big(-w_2<0,\ \xi_2>0\big)\vee \big (-w_2\xi_2=0\big),\\
    &\big(w_1+w_2<0,\ \xi_3>0\big)\vee \big ( (w_1+w_2)\xi_3=0 \big ),
  \end{align*}
  where $(X) \vee (Y) $ denotes either       $X$ or $Y$ holds,
  and are uniquely fulfilled with $w=(0,1)$ and $\xi=({-1},0,0)$. Now let us show that the optimality condition in Corollary \ref{CorUniquemultiplier} with $w=(0,1)$, and $\xi=({-1},0,0)$\if{,  $s_{\delta v}=0, s_w=0$ and $\bar\mu=0$}\fi does not hold.

By applying Theorem \ref{ThSuffCondMS_FOGE} we deduce that MSCQ and consequently Assumption \ref{Ass1} are fulfilled.
      Since the only multiplier is $\bar{\lambda}=(0,0,0)$, we have $\bar J^+(\bar\lambda)=\emptyset$ and $\KbG=\Gamma$.  Since $w_1=0$ and $\nabla g_1(\bar y)w=w_1$, (\ref{newEqOptCondIndex_e}) holds if and only if $\{1\}\subseteq \I^+$. Since $\{1\} \subseteq \I^+ \subseteq  \bar I(\vb)$,    one must have $\nabla g_1(\bar y)^T  \bar{v}=\bar{v}_1=0$.
   But then (\ref{newEqOptCondIndex_c}) means $\xi_3=0$ if $3\not \in \bar I (\vb)$, which in turn means that $\nabla g_3(\bar y)^T \bar{v}=\bar{v}_1+\bar{v}_2<0$.
   This is impossible since we cannot find $\vb\in \KbG=\Gamma$ satisfying $\vb_1=0$ and $\vb_1+\vb_2<0$.
   This shows   that the M-stationarity conditions do not correctly describe the faces of the critical cone.
\end{example}

Finally we want to compare our results with the ones of Gfrerer and Outrata \cite{GfrOut16a}, where the limiting normal cone of the normal cone mapping was computed and thus could be used to compute the conventional M-stationarity conditions for problem
(MPEC). The assumption 2-LICQ used in \cite{GfrOut16a} cannot be characterized by first-order and second-order derivatives of the constraint mapping $g$, however the sufficient condition for 2-LICQ as stated in \cite[Proposition 3]{GfrOut16a} is stronger than the 2-nondegeneracy  assumption we use.
The sufficient condition for 2-LICQ  in direction $\vb\in \KbG$ used in \cite[Proposition 3]{GfrOut16a} now states that for every index set $J$ with $\bar J^+(\Lb(\vb))\subseteq J\subseteq \bar I(\vb)$ satisfying
\[\nabla g_i(\yb)^T s+\vb^T\nabla^2 g_i(\yb)\vb\begin{cases}
  =0&i\in J\\ \leq0&i\in\bar I(\vb)\setminus J\end{cases}\ \mbox{for some $s\in\R^m$}\]
the mapping $(g_i)_{i\in J}$ is 2-regular in direction $\vb$. Such an index set $J$ always exists, e.g. by duality theory of linear programming $J=\bar J^+(\Lb(\vb))$ is a possible choice.  Now choose $J$ large enough such that for every $j\in \bar I(\vb)\setminus J$ the gradient $\nabla g_j(\yb)$ linearly depend on $\nabla g_i(\yb)$, $i\in J$. so that $J$ meets the requirements on the index set $\hat J$ used in Subsection \ref{SubSecTwoNondegen} and we see that the assumption of 2-regularity of $(g_i)_{i\in J}$ in direction $\vb$ implies 2-nondegeneracy of $g$ in direction $\vb$.

Further one can show that the necessary conditions of Theorem \ref{ThOptCondIndexSets} are stronger than the M-stationary conditions which one could obtain with the M-stationary conditions of \cite[Theorem 4]{GfrOut16a} insofar as an additional condition on $\delta x$ is included in Theorem \ref{ThOptCondIndexSets}.


\begin{thebibliography}{99}
\bibitem{Av85} {{\sc E. R. Avakov}, {\em Extremum conditions for smooth problems with equality-type constraints}, USSR Comput. Math. Math. Phys.,
25 (1985), pp.\ 24--32.}
%
\bibitem{Ad-Hen-Out} {\sc L. Adam, R. Henrion, J. Outrata}, {\em On M-stationarity conditions in MPECs and the associated qualification conditions}, Math. Program., 168 (2018), pp. 229-259.
%
\bibitem{Clarke} {\sc F.H. Clarke}, {\em Optimization and Nonsmooth Analysis}, Wiley-Interscience, New York, 1983.
%
\bibitem{Dam-Dut} {\sc S. Dempe, J. Dutta}, {\em Is bilevel programming a special case of a mathematical program with complementarity constraints?}, Math. Program., 131 (2012), pp. 37-48.
\bibitem{DoRo96}{\sc A.L. Dontchev, R.~T. Rockafellar}, {\em  Characterizations of strong regularity for variational inequalities over polyhedral convex sets},
    SIAM J. Optim., 6 (1996), pp.~1087--1105.
%
\bibitem{DoRo04}{\sc A.L. Dontchev, R.~T. Rockafellar}, {\em Regulartity and conditioning of solution mappings in variatonal anlysis},
    Set-Valued Anal., 12 (2004), pp.~79--109.
%
\bibitem{FleKanOut07}{\sc M.L. Flegel, C. Kanzow, J.V. Outrata}, {\em Optimality conditions for disjunctive programs
    with application to mathematical programs with equilibrium constraints}, Set-Valued Anal., 15 (2007), pp.~139--162.
%
%
\bibitem{Gfr11}{\sc H. Gfrerer}, {\em First order and second order characterizations of metric subregularity and calmness of constraint set mappings},
    SIAM J. Optim., 21 (2011), pp.~1439--1474.
%
%
\bibitem{Gfr13a}{\sc H. Gfrerer}, {\em On directional metric regularity, subregularity and optimality conditions for nonsmooth mathematical programs}, Set-Valued Var. Anal., 21 (2013), pp.~ 151--176.
%
%
\bibitem{Gfr13b}{\sc H. Gfrerer}, {\em On directional metric subregularity and second-order optimality conditions for a class of nonsmooth mathematical programs},
SIAM J. Optim.,  23 (2013), pp.~632--665.
%

\bibitem{Gfr14a}{\sc H. Gfrerer}, {\em Optimality conditions for disjunctive programs based on generalized differentiation with application to
mathematical programs with equilibrium constraints}, SIAM J. Optim., 24 (2014), pp.\ 898--931.
%
\bibitem{Gfr18}{\sc H. Gfrerer}, {\em Linearized M-stationarity conditions for general optimization problems}, Set-Valued Var. Anal., https://doi.org/10.1007/s11228-018-0491-6.
%
\bibitem{GfrKl16}{\sc H. Gfrerer, D. Klatte}, {\em Lipschitz and H\"older stability of optimization problems
and generalized equations}, Math. Program.,  158 (2016), pp.~35--75.
%
%
\bibitem{GfrMo15a}{\sc H. Gfrerer, B.S. Mordukhovich}, {\em Complete characterizations of tilt stability in nonlinear programming under weakest qualification conditions},  SIAM J. Optim., 25 (2015), pp.~2081--2119.
%
\bibitem{GfrOut16a}{\sc H. Gfrerer,  J.V. Outrata}, {\em On computation of limiting coderivatives of the normal-cone mapping to inequality
systems and their applications}, Optimization,  65 (2016), pp.~671--700.
%
%
\bibitem{GfrOut16b}{\sc H. Gfrerer, J.V. Outrata}, {\em  On computation of generalized derivatives of the normal-cone mapping and their applications},   Math. Oper. Res. 41 (2016), pp.~1535--1556.
%
\bibitem{GfrYe16a}{\sc H. Gfrerer, J.J. Ye}, {\em New constraint qualifications for
    mathematical programs with equilibrium constraints via variational analysis},  SIAM J. Optim., 27 (2017), pp. 842-865.
%
%
\bibitem{HenOut05}{\sc R. Henrion, J.V. Outrata},  {\em Calmness of constraint systems with applications},
    Math. Program., Ser. B, 104 (2005), pp.~437--464.
    \bibitem{Luo-Pang-Ralph} {\sc Z-Q. Luo, J-S. Pang}, {\em Mathematical Programs with Equilibrium Constraints}, Cambridge University Press, 1996.
    \bibitem{Mor} {\sc B.S. Mordukhovich}, {\em Variational Analysis and Generalized Differentiation, Vol. 1: Basic Theory, Vol. 2: Applications}, Springer, Berlin, 2006.

    \bibitem{Out-Koc-Zowe} {\sc J.V. Outrata, M. Ko\v{c}vara, J. Zowe}, {\em Nonsmooth Approach to Optimization Problems with Equilibrium  Constraints: Theory, Applications and Numerical Results}, Kluwer Academic Publishers, Dordrecht, The Netherlands, 1998.
%
%\bibitem{Mo06a}{\sc B.~S. Mordukhovich}, {\em Variational Analysis and Generalized Differentiation, I: Basic Theory},
%     Springer, Berlin, Heidelberg, 2006.
%
%
\bibitem{Rob84}{\sc S.~M. Robinson}, {\em Local structure of feasible sets in nonlinear programming. Part II: Nondegeneracy}, Math. Program. Studies 22 (1984), pp.~217–230.
%
\bibitem{Ro70}{\sc R.T. Rockafellar}, {\em Convex analysis}, Princeton, New Jersey, 1970.
%
\bibitem{RoWe98}{\sc R.T. Rockafellar, R.~J-B. Wets}, {\em Variational analysis}, Springer, Berlin, 1998.
%
{\bibitem{Tr84} {\sc A.A. Tret'yakov}, {\em Necessary and sufficient conditions for optimality of p-th order}, USSR Comput. Math. Math. Phys.,
24 (1984), pp.\ 123--127.}
%
\bibitem{YeYe}{\sc J.J. Ye, X.Y. Ye}, {\em Necessary optimality conditions for optimization problems with variational inequality constraints},  Math. Oper. Res., 22 (1997), pp. 977-997.

\end{thebibliography}
\end{document}